\providecommand\@dotsep{5}
\def\listtodoname{List of Todos}
\def\listoftodos{\@starttoc{tdo}\listtodoname}
\numberwithin{equation}{section}
\newcommand{\R}{\mathbb{R}}
\newcommand{\N}{\mathcal{N}}
\newcommand{\C}{\mathbb{C}}
\newcommand{\h}{H^{1/2}_{\varepsilon}}
\newcommand{\ri}{\rightarrow}
\DeclareMathOperator{\supp}{supp}
\DeclareMathOperator{\e}{\varepsilon}
\newtheorem{prop}{Proposition}[section]
\newtheorem{lem}{Lemma}[section]
\newtheorem{thm}{Theorem}[section]
\newtheorem{cor}{Corollary}[section]
\newtheorem{remark}{Remark}[section]
\title[fractional Schr\"odinger equations with magnetic field]{On a fractional magnetic Schr\"odinger equation in $\R$ with exponential critical growth}
\author[V. Ambrosio]{Vincenzo Ambrosio}
\address{Vincenzo Ambrosio\hfill\break\indent
Dipartimento di Ingegneria Industriale e Scienze Matematiche \hfill\break\indent
Universit\`a Politecnica delle Marche\hfill\break\indent
Via Brecce Bianche, 12\hfill\break\indent
60131 Ancona (Italy)}
\email{vincenzo.ambrosio2@unina.it}
\keywords{Fractional magnetic Laplacian; variational methods; critical exponential growth.}
\subjclass[2010]{47G20, 35R11, 35A15, 58E05}
\begin{document}

\maketitle

\begin{abstract}
We deal with a class of fractional magnetic Schr\"odinger equations in the whole line with exponential critical growth. Under a local condition on the potential, we use penalization methods and Ljusternik-Schnirelmann category theory to investigate the existence, multiplicity and concentration of nontrivial weak solutions. 
\end{abstract}

\maketitle

\section{Introduction}

\noindent 
In this paper we are interested in the existence, multiplicity and concentration of nontrivial solutions for the following fractional nonlinear Schr\"odinger equation
\begin{equation}\label{P}
\e(-\Delta)_{A/\e}^{1/2}u+V(x)u=f(|u|^{2})u \quad \mbox{ in } \R,
\end{equation}
where $\e>0$ is a parameter,  $A:\R\ri \R$ is a magnetic field belonging to $C^{0,\alpha}(\R, \R)$ for some $\alpha\in (0, 1]$, $V: \R\rightarrow \R$ is an electric potential, $f:\R\ri \R$ is a continuous nonlinearity, and $(-\Delta)^{1/2}_{A}$ is the $\frac{1}{2}$-magnetic Laplacian which is defined as 
\begin{equation*}
(-\Delta)^{1/2}_{A}u(x)
:=\frac{1}{\pi}\lim_{r\rightarrow 0} \int_{\R\setminus (x-r, x+r)} \frac{u(x)-e^{\imath (x-y)\cdot A(\frac{x+y}{2})} u(y)}{|x-y|^{2}} dy, 
\end{equation*}
for any $u: \R\ri \C$ sufficiently smooth.
We recall that the fractional magnetic Laplacian $(-\Delta)^{s}_{A}$, with $s\in (0, 1)$, has been recently proposed in \cite{DS, I10} as an extension in the fractional setting of the well-known magnetic Laplacian $(\frac{1}{\imath}\nabla -A(x))^{2}$; see \cite{AFF, BF, EL, Cingolani, K} for more details.

More in general, nonlocal and fractional operators have received a considerable attention from many mathematicians, both for pure academic research and for their presence in many models coming from different fields, such as phase transitions, optimization, finance, minimal surfaces, just to mention a few; see \cite{DPV, MBRS} for a more exhaustive introduction on this subject. 

We note that if $A=0$, then $(-\Delta)^{s}_{A}$ reduces to the celebrated fractional Laplacian $(-\Delta)^{s}$, and \eqref{P} can be regarded as a particular case of the well-known fractional Schr\"odinger equation 
\begin{align}\label{FSE}
\e^{2s}(-\Delta)^{s}u+V(x) u=f(x, u) \quad \mbox{ in } \R^{N},
\end{align}
which has received a tremendous popularity in the last years due to its crucial role in the study of fractional quantum mechanics; see \cite{Laskin1}. In fact, several existence and multiplicity results have been established for \eqref{FSE} under different conditions on the potential $V$ and the nonlinearity $f$ applying appropriate variational and topological arguments; see \cite{AM, A1, DDPW, DPMV, FQT, Is} and references therein.

However, when $N=1$ and $s=\frac{1}{2}$ in \eqref{FSE}, only few papers appear in literature; see \cite{ADOM, Cabre-Sola, dSA, DOMS, IS, FLS}. Indeed, one of the main difficulty in the study of this class of problems is related to the fact that the embedding $H^{1/2}(\R, \R)\subset L^{q}(\R, \R)$ is continuous for all $q\in [2, \infty)$ but not in $L^{\infty}(\R, \R)$; see \cite{DPV}. This means that the maximal growth that we have to impose on the nonlinearity $f$ to deal with \eqref{FSE} via variational methods in a suitable subspace of  $H^{1/2}(\R, \R)$, is given by $e^{\alpha_{0}|u|^{2}}$ as $|u|\ri \infty$ for some $\alpha_{0}>0$. More precisely, this fact is a consequence of the following fractional Moser-Trudinger inequality established by Ozawa in \cite{Ozawa}:
\begin{thm}\label{MT}
There exists $\omega \in (0, \pi]$ such that, for all $\alpha \in (0, \omega)$ there exists $C_{\alpha}>0$ such that 
$$
\int_{\R} (e^{\alpha|u|^{2}}-1) dx\leq C_{\alpha} \|u\|^{2}_{L^{2}(\R)}, 
$$
for all $u\in H^{1/2}(\R, \R)$ with $\|(-\Delta)^{1/4}u\|^{2}_{L^{2}(\R)}\leq 1$. 
\end{thm}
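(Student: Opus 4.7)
The plan is to reproduce Ozawa's Fourier-analytic argument: expand the exponential as a power series, control each $L^{2k}$-norm by a sharp Gagliardo--Nirenberg-type inequality with explicit dependence on the exponent, and sum the resulting series.

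First, since $u\in L^{2}(\R)$ and $H^{1/2}(\R)\hookrightarrow L^{q}(\R)$ for all $q\in[2,\infty)$, monotone convergence justifies the Taylor expansion
$$\int_{\R}(e^{\alpha|u|^{2}}-1)\,dx=\sum_{k=1}^{\infty}\frac{\alpha^{k}}{k!}\|u\|_{L^{2k}(\R)}^{2k}.$$
The theorem therefore reduces to estimating $\|u\|_{L^{2k}}^{2k}$ in a way that keeps track of the constant as $k\to\infty$.

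The second and crucial step is to establish that there exists an absolute constant $c>0$ such that, for every $p\geq 2$,
$$\|u\|_{L^{p}(\R)}\leq c\,\sqrt{p}\,\|(-\Delta)^{1/4}u\|_{L^{2}(\R)}^{1-2/p}\,\|u\|_{L^{2}(\R)}^{2/p}.$$
I would prove this by a Fourier splitting $\hat u=\hat u\,\chi_{\{|\xi|\leq R\}}+\hat u\,\chi_{\{|\xi|>R\}}$: the low-frequency part is bounded pointwise by Cauchy--Schwarz against $|\xi|^{-1/2}\chi_{\{|\xi|\leq R\}}$, producing a $\sqrt{\log R}$-type $L^{\infty}$ estimate controlled by $\|(-\Delta)^{1/4}u\|_{L^{2}}$, while the high-frequency tail is controlled in $L^{2}$ by Plancherel and the factor $|\xi|^{-1/2}$. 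Interpolating and optimizing $R$ as a function of $p$ yields the $\sqrt{p}$ growth of the embedding constant.

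Third, I substitute $p=2k$ under the normalization $\|(-\Delta)^{1/4}u\|_{L^{2}}\leq 1$, obtaining
$$\|u\|_{L^{2k}(\R)}^{2k}\leq (c\sqrt{2k})^{2k}\|u\|_{L^{2}}^{2}=c^{2k}(2k)^{k}\|u\|_{L^{2}}^{2},$$
and therefore
$$\int_{\R}(e^{\alpha|u|^{2}}-1)\,dx\leq\|u\|_{L^{2}}^{2}\sum_{k=1}^{\infty}\frac{(2\alpha c^{2}k)^{k}}{k!}.$$
Stirling's bound $k!\geq\sqrt{2\pi k}\,(k/e)^{k}$ dominates the tail by a geometric series of ratio $2\alpha c^{2}e$, which converges provided $\alpha<(2c^{2}e)^{-1}$. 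Setting $\omega:=\min\{(2c^{2}e)^{-1},\pi\}$ and letting $C_{\alpha}$ be the finite value of the sum completes the proof.

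The main obstacle is isolating the correct $\sqrt{p}$ growth of the embedding constant in the second step. A naive interpolation between $L^{2}$ and any fixed $L^{q_{0}}$ produces a constant growing too fast in $p$, and the series in the third step diverges for every $\alpha>0$, yielding no critical threshold. The sharp $\sqrt{p}$ rate, which reflects the borderline failure of $H^{1/2}(\R)\hookrightarrow L^{\infty}(\R)$, is precisely what generates a finite critical exponent $\omega\in(0,\pi]$ and must be extracted by working directly at the Fourier level rather than through abstract embeddings.
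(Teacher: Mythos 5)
Your overall strategy is the right one, and in fact it is not compared against an internal argument of the paper at all: the paper does not prove Theorem \ref{MT}, it imports it verbatim from Ozawa \cite{Ozawa}, whose proof is exactly the scheme you propose (power-series expansion, a Gagliardo--Nirenberg inequality whose constant grows like $\sqrt{p}$, and a Stirling estimate to sum the series). Your first and third steps are correct as written: Tonelli justifies the term-by-term expansion, and once one has $\|u\|_{L^{p}(\R)}\leq c\sqrt{p}\,\|(-\Delta)^{1/4}u\|_{L^{2}(\R)}^{1-2/p}\|u\|_{L^{2}(\R)}^{2/p}$ for all $p\geq 2$ with an absolute $c$, the normalization $\|(-\Delta)^{1/4}u\|_{L^{2}}\leq 1$ gives $\|u\|_{L^{2k}}^{2k}\leq c^{2k}(2k)^{k}\|u\|_{L^{2}}^{2}$, and Stirling dominates the series by a geometric one of ratio $2\alpha c^{2}e$; taking $\omega=\min\{(2c^{2}e)^{-1},\pi\}$ is legitimate because the theorem only asserts the existence of \emph{some} $\omega\in(0,\pi]$.

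The genuine gap is in your second step, i.e. in the proposed proof of the $\sqrt{p}$-inequality itself. The low-frequency estimate you describe is false as stated: Cauchy--Schwarz against $|\xi|^{-1/2}\chi_{\{|\xi|\leq R\}}$ requires $\int_{|\xi|\leq R}|\xi|^{-1}\,d\xi$, which diverges at $\xi=0$ in dimension one, and indeed no bound of the form $\|u_{\mathrm{low}}\|_{L^{\infty}}\leq C\sqrt{\log R}\,\|(-\Delta)^{1/4}u\|_{L^{2}}$ can hold: testing with $\hat{u}=|\xi|^{-1}\chi_{\{\epsilon\leq|\xi|\leq 1\}}$ makes the left-hand side grow like $\log(1/\epsilon)$ while the right-hand side grows only like $\sqrt{\log(1/\epsilon)}$. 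The frequencies near the origin must be paid for with $\|u\|_{L^{2}}$, not with the homogeneous seminorm; for instance, by Hausdorff--Young and H\"older, $\|u_{\mathrm{low}}\|_{L^{p}}\lesssim\|\hat{u}\chi_{\{|\xi|\leq R\}}\|_{L^{p'}}\leq\|u\|_{L^{2}}(2R)^{\frac{1}{2}-\frac{1}{p}}$. Moreover, your treatment of the high-frequency tail only controls it in $L^{2}$, which does not yield any bound on its $L^{p}$ norm, so the announced ``interpolation'' cannot be carried out as described. The correct high-frequency estimate is again Hausdorff--Young plus H\"older: $\|u_{\mathrm{high}}\|_{L^{p}}\lesssim\|(-\Delta)^{1/4}u\|_{L^{2}}\,\|\,|\xi|^{-1/2}\chi_{\{|\xi|>R\}}\|_{L^{2p/(p-2)}}$, and the latter norm equals $(p-2)^{\frac{p-2}{2p}}R^{-1/p}$ up to an absolute constant, which is precisely where the $\sqrt{p}$ growth comes from; optimizing in $R$ then gives the inequality, and your remaining steps go through unchanged. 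So the architecture of your proof matches Ozawa's, but the key lemma needs this repaired Fourier-splitting argument, since as written the central estimate is not just unproved but untrue.
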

It is worth pointing out that  Moser-Trudinger type inequalities \cite{MT, Trudinger} have been widely used in the study of several bidimensional elliptic problems with exponential critical growth; see for instance \cite{Alves, ADOM1, AF2, Cao, dFMR, DOS} and references therein.

On the other hand, our work is strongly motivated by some recent existence and multiplicity results established for fractional magnetic problems set in $\R^{N}$ or in bounded domains, involving $(-\Delta)^{s}_{A}$, with $s\in (0, 1)$ and in higher dimension $N>2s$; see \cite{A3, AD, FPV, ZSZ}.

Therefore, the aim of this paper is to give a first contribute concerning the existence, multiplicity and concentration of solutions for fractional magnetic problems in $\R$ with exponential critical growth. 

In order to state precisely our main result, we introduce the assumptions on the potential $V$ and the nonlinearity $f$. Along the paper, we will assume that 
$V\in C(\R, \R)$ verifies the following conditions due to del Pino and Felmer \cite{DF}:
\begin{compactenum}[$(V_1)$]
\item $\inf_{x\in \R} V(x)=V_{0}>0$;
\item there exists a bounded open set $\Lambda\subset \R$ such that
$$
V_{0}<\min_{x\in \partial \Lambda} V(x) \quad \mbox{ and } \quad M=\{x\in \Lambda: V(x)=V_{0}\}\neq \emptyset,
$$
\end{compactenum}
while the function $f:\R\rightarrow \R$ satisfies the following assumptions:
\begin{compactenum}[$(f_1)$]
\item $f\in C^{1}(\R, \R)$ and $f(t)=0$ for  $t\leq 0$;
\item there exists $\alpha_{0}>0$ such that
\begin{align*}
\lim_{t\rightarrow \infty} \frac{f(t)\sqrt{t}}{e^{\alpha t}}=0 \quad \mbox{ for any } \alpha>\alpha_{0}, \quad   \lim_{t\rightarrow \infty} \frac{f(t)\sqrt{t}}{e^{\alpha t}}=\infty \quad \mbox{ for any } \alpha<\alpha_{0};
\end{align*}
\item there exists $\theta>2$ such that $0<\frac{\theta}{2} F(t)\leq t f(t)$ for any $t>0$, where $\displaystyle{F(t)=\int_{0}^{t} f(\tau)d\tau}$;
\item  $f(t)$ is increasing for $t>0$;
\item there exists $p>2$ and $C_{p}>0$ such that 
$$
f(t)\geq C_{p}t^{\frac{p-2}{2}},
$$
where
\begin{align*}
C_{p}>\left[\beta_{p} \left(\frac{2\theta}{\theta-2}  \right) \frac{1}{\min\{1, V_{0}\}}   \right]^{\frac{p-2}{2}},
\end{align*}
with
\begin{align*}
\beta_{p}=\inf_{\mathcal{M}_{0}} I_{0}, \quad \mathcal{M}_{0}=\{u\in X_{0}\setminus\{0\}: \langle I'_{0}(u),u\rangle=0\}, 
\end{align*}
and
\begin{align*}
I_{0}(u)=\frac{1}{2}\left(\frac{1}{2\pi}[u]^{2}+\int_{\R} V_{0}|u|^{2} dx\right)-\frac{1}{p}\int_{\R} |u|^{p} dx;
\end{align*}
\item there exist $\sigma\in (2, \infty)$ and $C_{\sigma}>0$ such that
$$
f'(t)\geq C_{\sigma} t^{\frac{\sigma-4}{2}} \quad \mbox{ for any } t>0.
$$
\end{compactenum} 

\noindent
The main result of this paper is the following:
\begin{thm}\label{thm1}
Suppose that $V$ satisfies $(V_1)$-$(V_2)$ and $f$ verifies $(f_1)$-$(f_6)$. Then, for any $\delta>0$ such that
$$
M_{\delta}=\{x\in \R: {\rm dist}(x, M)\leq \delta\}\subset \Lambda,
$$
there exists $\e_{\delta}>0$ such that, for any $\e\in (0, \e_{\delta})$, problem \eqref{P} has at least $cat_{M_{\delta}}(M)$ nontrivial solution $u_{\e}$. Moreover, if $u_{\e}$ denotes one of these solutions and $\eta_{\e}\in \R$ is a global maximum point of  $|u_{\e}|$, then
$$
\lim_{\e\rightarrow 0} V(\eta_{\e})=V_{0},
$$
and there exists $C>0$ such that
$$
|u_{\e}(x)|\leq \frac{C\e^{2}}{\e^{2}+|x-\eta_{\e}|^{2}} \quad \mbox{ for any } x\in \R.
$$
\end{thm}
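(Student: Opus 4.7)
The plan is to follow the del Pino--Felmer penalization scheme combined with a Ljusternik--Schnirelmann multiplicity argument, adapted to the fractional magnetic framework in $\R$ with exponential critical growth. First, I would fix $k>\frac{\theta}{\theta-2}$ and $a>0$ satisfying $f(a)=V_{0}/k$, and introduce the truncated nonlinearity $\tilde{f}(t)=f(t)$ for $t\leq a$ and $\tilde{f}(t)=V_{0}/k$ for $t>a$; then set
$$
g(x,t)=\chi_{\Lambda}(x)f(t)+(1-\chi_{\Lambda}(x))\tilde{f}(t),
$$
so that solutions $u_{\e}$ of the modified equation $\e(-\Delta)^{1/2}_{A/\e}u+V(x)u=g(\e x,|u|^{2})u$ (on the magnetic Hilbert space $\h$ built from the Gagliardo seminorm with phase $e^{\imath(x-y)\cdot A(\e(x+y)/2)}$) are genuine solutions of \eqref{P} as soon as $|u_{\e}(x)|\leq a$ for $x\notin \Lambda_{\e}:=\Lambda/\e$. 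The diamagnetic inequality $[|u|]_{H^{1/2}(\R)}\leq [u]_{\h}$ will let me transplant Theorem~\ref{MT} to the magnetic setting.

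Next I would verify that the penalized functional $J_{\e}$ has the mountain pass geometry (from $(f_{1})$--$(f_{3})$ and $(f_{5})$) and that Palais--Smale sequences are bounded by the Ambrosetti--Rabinowitz type condition $(f_{3})$. The key technical point here is the compactness of PS sequences below a threshold $c^{*}<\frac{\omega}{2\alpha_{0}}\min\{1,V_{0}\}$: on such sequences, the uniform bound $\|(-\Delta)^{1/4}u_{n}\|_{L^{2}}^{2}\leq 2c^{*}/\min\{1,V_{0}\}<\omega/\alpha_{0}$ combined with Theorem~\ref{MT} and $(f_{2})$ gives local $L^{q}$ convergence of $\tilde{f}(|u_{n}|^{2})u_{n}$; assumption $(f_{5})$ with $C_{p}$ large is precisely what forces the mountain pass level below this threshold, via a comparison with $I_{0}$ and its ground-state energy $\beta_{p}$. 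This is the main analytic obstacle and where exponential criticality in dimension one bites hardest.

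For multiplicity, I would study the autonomous limit problem in $\X_{0}=H^{1/2}(\R,\C)$ with constant potential $V_{0}$ and nonlinearity $f$; by Nehari manifold methods together with the Moser--Trudinger estimate (with $C_{p}$ large enough to confine the least energy level below $\omega/(2\alpha_{0})\min\{1,V_{0}\}$), the autonomous functional has a ground state of energy $c_{V_{0}}$. I would then construct the usual photography map $\Phi_{\e}:M\ri \N_{\e}$, sending $y\in M$ to a suitably truncated translate of the limit ground state concentrated near $y/\e$, and the barycenter map $\beta_{\e}:\N_{\e}\ri \R$, and show $\beta_{\e}\circ\Phi_{\e}$ is homotopic to the inclusion $M\hookrightarrow M_{\delta}$ for $\e$ small. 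Standard Lusternik--Schnirelmann theory on the sublevel set $\{J_{\e}\leq c_{V_{0}}+h(\e)\}\cap\N_{\e}$ then yields at least $\mathrm{cat}_{M_{\delta}}(M)$ critical points.

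Finally, to recover solutions of \eqref{P} itself I would establish a uniform $L^{\infty}$ decay estimate for the critical points of $J_{\e}$: via a Moser iteration adapted to $(-\Delta)^{1/2}_{A/\e}$ (again using the diamagnetic inequality to reduce to the non-magnetic case) and a Kato-type inequality, I obtain $|u_{\e}(x)|\ri 0$ as $|x|\ri\infty$, uniformly in $\e$, which combined with the location of the maximum points forces $|u_{\e}|\leq a$ outside $\Lambda_{\e}$. The same uniform estimate implies that if $\eta_{\e}$ is a global maximum point of $|u_{\e}|$ then $\eta_{\e}\in\Lambda$ and, up to translation, $|u_{\e}(\cdot+\eta_{\e}/\e)|$ converges to a ground state of the autonomous problem at level $V(\lim\eta_{\e})$, hence $V(\eta_{\e})\ri V_{0}$. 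The pointwise polynomial decay $|u_{\e}(x)|\leq C\e^{2}/(\e^{2}+|x-\eta_{\e}|^{2})$ is the last and most delicate step: I would compare $|u_{\e}|$ with the Poisson kernel of $(-\Delta)^{1/2}$ on $\R$ by building an explicit supersolution of the form $w(x)=C/(1+|x|^{2})$, using that $(-\Delta)^{1/2}w\gtrsim w$ at infinity, and invoking a comparison principle; the main obstacle here is handling the magnetic phase and the absence of a pointwise maximum principle for $(-\Delta)^{1/2}_{A}$, which I circumvent by working with $|u_{\e}|$ and the Kato inequality $(-\Delta)^{1/2}|u|\leq \mathrm{Re}(\overline{u/|u|}(-\Delta)^{1/2}_{A}u)$.
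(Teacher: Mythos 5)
Your proposal follows essentially the same route as the paper: del Pino--Felmer penalization, mountain pass with Palais--Smale compactness below a threshold obtained through the diamagnetic inequality and Ozawa's inequality (with $(f_5)$ forcing the minimax level below it), photography and barycenter maps plus Ljusternik--Schnirelmann category on the Nehari manifold, and then Moser iteration, a Kato-type inequality and comparison with the barrier $C/(1+|x|^{2})$ to return to \eqref{P}, locate the maximum points and obtain the decay estimate. The only place you deviate is in keeping the raw truncation $\hat{f}$, whereas the paper smooths it to a $C^{1}$ nonlinearity (and invokes $(f_6)$) precisely so that the Nehari manifold is a differentiable natural constraint and the constrained Palais--Smale condition of Proposition \ref{propPSc} can be verified; your category step tacitly requires this smoothing (or an equivalent device), but that is a minor adjustment rather than a different method.
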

We note that when $A=0$ in \eqref{P}, and $V$ and $f$ fulfill $(V_1)$-$(V_2)$ and $(f_1)$-$(f_4)$ respectively, Alves et al. \cite{ADOM}, inspired by \cite{DOS}, obtained the existence of a positive solution which concentrates around a local minima of $V(x)$ as $\e\ri 0$. 
Therefore, Theorem \ref{thm1} can be seen as a generalization in fractional magnetic setting of the results in \cite{ADOM} and \cite{DOS}. Moreover, our results complement and improve them, because here we also consider the question related to the multiplicity of solutions to \eqref{P} for $\e>0$ small enough, which is not considered in the above mentioned papers.
We emphasize that the presence of the magnetic field creates  substantial difficulties that make the study of \eqref{P} rather tough with respect to \cite{ADOM, DOS}, and some appropriate arguments will be needed to overcome this obstacle. 
More precisely, after considering a modified problem in the spirit of \cite{DF} (see also \cite{AFF}), we will make use of the diamagnetic inequality \cite{DS}, the Moser-Trudinger inequality for the modulus of functions belonging to the fractional magnetic space $H^{1/2}_{\e}$ (this is lawful because the diamagnetic inequality says that $|u|\in H^{1/2}(\R, \R)$ whenever $u\in H^{1/2}_{A}(\R, \C)$), and the H\"older continuity of the magnetic field to deduce some fundamental estimates which allow us to deduce the existence of a nontrivial solution for the modified problem when $\e>0$ is small enough.
After that, we apply Nehari manifold argument and  Ljusternik-Schnirelmann category theory to obtain a multiplicity result for the auxiliary problem. Finally, we have to prove that the solutions of the modified problem are also solutions of the original one provided that $\e$ is sufficiently small. This goal is gained by proving some $L^{\infty}$-estimates, independent of $\e$, obtained by combining a Moser iteration procedure \cite{Moser} which takes care of the exponential critical growth of the nonlinearity, with a Kato's type inequality \cite{Kato} which allow us to show that the modulus of each solution of the modified problem is a subsolution of a certain fractional problem in $\R$ involving $(-\Delta)^{1/2}$. 

\noindent
The paper is structured as follows. In Section $2$ we introduce the notations and we prove some technical results which will be used along the paper. In Section $3$ we introduce the modified problem and we use mountain pass theorem to deduce a first existence result for it. In Section $4$ we apply Ljusternik-Schnirelmann category theory to obtain multiple solutions. Finally, we give the proof of Theorem \ref{thm1}.

\section{Preliminaries results}

\noindent
Let $L^{2}(\R, \C)$ be the space of complex-valued functions with summable square, endowed with the real
scalar product 
$$
\langle u, v\rangle_{L^{2}}=\Re\left(\int_{\R} u \bar{v} \,dx\right)
$$
for all $u, v\in L^{2}(\R, \C)$.
We define the fractional magnetic Sobolev space
$$
H^{1/2}_{A}(\R, \C)=\{u\in L^{2}(\R, \C): [u]_{A}<\infty\},
$$
where
$$
[u]_{A}^{2}=\frac{1}{2\pi}\iint_{\R^{2}} \frac{|u(x)-u(y)e^{\imath A(\frac{x+y}{2})\cdot (x-y)}|^{2}}{|x-y|^{2}} \, dx dy,
$$
endowed with the norm
$$
\|u\|^{2}_{A}=[u]_{A}^{2}+\|u\|^{2}_{L^{2}(\R)}.
$$
When $A=0$ and $u: \R\ri \R$, we put 
$$
[u]^{2}:=[u]^{2}_{0}=\frac{1}{2\pi}\iint_{\R^{2}} \frac{|u(x)-u(y)|^{2}}{|x-y|^{2}} \, dx dy
$$ 
which is the Gagliardo seminorm of $u\in H^{1/2}(\R, \R):=\{u\in L^{2}(\R, \R): [u]<\infty\}$. 
Here we used the notation $\|\cdot\|_{L^{p}(\R)}$ to denote the $L^{p}(\R)$-norm of any function $u:\R\rightarrow \R$.
We recall that $H^{1/2}(\R, \R)$ is continuously embedded in $L^{q}(\R, \R)$ for all $q\in [2, \infty)$ and that $H^{1/2}(K, \R)\subset L^{q}(K, \R)$ is compact for any $q\in [1, \infty)$ and compact set $K\subset \R$; see \cite{DPV, MBRS}.

We note that $H^{1/2}_{A}(\R, \C)$ is a Hilbert space with the real scalar product
$$
\langle u, v\rangle_{s, A}=\langle u, v\rangle_{L^{2}}+\frac{1}{2\pi}\Re\iint_{\R^{2}} \frac{(u(x)-u(y)e^{\imath A(\frac{x+y}{2})\cdot (x-y)})\overline{(v(x)-v(y)e^{\imath A(\frac{x+y}{2})\cdot (x-y)})}}{|x-y|^{2}} \,dx dy
$$ 
for any $u, v\in H^{1/2}_{A}(\R, \C)$.

Arguing as in Lemma $3.1$ and  Lemma $3.5$ in \cite{DS}, and Lemma $2.4$ in \cite{AD}, we can see that the following results hold true.
\begin{thm}\label{Sembedding}
The space $H^{1/2}_{A}(\R, \C)$ is continuously embedded into $L^{r}(\R, \C)$ for any $r\in [2, \infty)$ and compactly embedded into $L^{r}(K, \C)$ for any $r\in [1, \infty)$ and any compact $K\subset \R$.
\end{thm}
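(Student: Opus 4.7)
My plan is to reduce both parts of the theorem to the already-recalled scalar embeddings for $H^{1/2}(\R,\R)$ via the diamagnetic inequality. For any $u\in H^{1/2}_{A}(\R,\C)$, the pointwise estimate
\[
\bigl||u(x)|-|u(y)|\bigr|=\bigl||u(x)|-|u(y)e^{\imath A(\tfrac{x+y}{2})(x-y)}|\bigr|\leq \bigl|u(x)-u(y)e^{\imath A(\tfrac{x+y}{2})(x-y)}\bigr|
\]
integrates to $[|u|]\leq [u]_{A}$, hence $\||u|\|_{H^{1/2}(\R,\R)}\leq \|u\|_{A}$. Since $\|u\|_{L^{r}(\R,\C)}=\||u|\|_{L^{r}(\R,\R)}$, composing with the continuous embedding $H^{1/2}(\R,\R)\hookrightarrow L^{r}(\R,\R)$ recalled above, valid for every $r\in[2,\infty)$, immediately yields the continuous embedding claim.

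For the local compact embedding, the diamagnetic inequality alone is insufficient since it retains only the modulus and discards the phase. My plan is to localize: fix a compact $K\subset\R$ and pick a cut-off $\eta\in C_{c}^{\infty}(\R)$ with $\eta\equiv 1$ on $K$. I will then show that, for any bounded sequence $\{u_{n}\}\subset H^{1/2}_{A}(\R,\C)$, the sequence $\eta u_{n}$ is bounded in the ordinary complex-valued space $H^{1/2}(\R,\C)$. The decisive step is the comparison of Gagliardo seminorms: writing $\phi=A(\tfrac{x+y}{2})(x-y)$ and using
\[
|u(x)-u(y)|\leq \bigl|u(x)-u(y)e^{\imath\phi}\bigr|+|u(y)|\,\bigl|e^{\imath\phi}-1\bigr|,
\]
together with $|e^{\imath\phi}-1|\leq \min(2,|\phi|)$ and the fact that $A\in C^{0,\alpha}$ is bounded on the (compact) support of $\eta$, the error term contributes a quantity controlled by $\|\eta u_{n}\|_{L^{2}}^{2}$ once the integration region $\R^{2}$ is split into $\{|x-y|\leq 1\}$ and $\{|x-y|>1\}$. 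With $\eta u_{n}$ bounded in the non-magnetic $H^{1/2}(\R,\C)$, the classical Rellich-type compactness produces a subsequence converging strongly in $L^{r}(K,\C)$ first for $r=2$ and then for every $r\in[1,\infty)$ by H\"older on the bounded set $K$ and interpolation.

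The main obstacle is precisely this seminorm comparison: the magnetic phase factor introduces cross terms that would diverge if one tried to estimate them globally on $\R$, so the localization to the support of $\eta$ (where $A$ is bounded by its H\"older continuity) and the careful split at $|x-y|=1$ to tame the diagonal singularity are essential. Once these tools are in place, everything else is a direct invocation of the scalar theory.
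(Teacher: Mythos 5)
Your proposal is correct and follows essentially the same route the paper relies on: the paper gives no in-text proof but refers to Lemmas $3.1$ and $3.5$ of \cite{DS} and Lemma $2.4$ of \cite{AD}, whose arguments are exactly yours — the diamagnetic inequality reduces the global continuous embedding to the scalar one, while the local compactness comes from comparing the magnetic and ordinary Gagliardo seminorms near a compact set via $|e^{\imath t}-1|\le\min(2,|t|)$ and the local boundedness of $A$, followed by the scalar Rellich-type theorem and interpolation. The only minor point to watch is that in the near-diagonal term you need $A$ bounded on a fixed neighbourhood of $\supp\eta$ (the midpoint $\frac{x+y}{2}$ can fall slightly outside the support), which is immediate from the continuity of $A$.
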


\begin{lem}\label{DI}
For any $u\in H^{1/2}_{A}(\R, \C)$, we get $|u|\in H^{1/2}(\R,\R)$ and it holds
$$
[|u|]\leq [u]_{A}.
$$
We also have the following pointwise diamagnetic inequality 
$$
\left| |u(x)|-|u(y)| \right|\leq \left|u(x)-u(y)e^{\imath A(\frac{x+y}{2})\cdot (x-y)} \right| \quad \mbox{ a.e. } x, y\in \R.
$$
\end{lem}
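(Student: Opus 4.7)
The plan is to obtain the pointwise inequality first and then integrate. The key algebraic observation is that for any real $\theta$ and any $w \in \C$ one has $|e^{\imath\theta} w| = |w|$, so multiplying $u(y)$ by the unit-modulus phase $e^{\imath A(\frac{x+y}{2})\cdot(x-y)}$ leaves its modulus unchanged.

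First I would fix $x, y \in \R$ and set $\theta(x,y) := A(\frac{x+y}{2})\cdot (x-y) \in \R$. Writing $a = u(x)$ and $b = e^{\imath \theta(x,y)} u(y)$ as complex numbers, the reverse triangle inequality in $\C$ yields
\begin{equation*}
\bigl| |a| - |b| \bigr| \leq |a - b|.
\end{equation*}
Since $|b| = |e^{\imath \theta(x,y)}|\,|u(y)| = |u(y)|$, this is exactly the pointwise diamagnetic inequality
\begin{equation*}
\bigl| |u(x)| - |u(y)| \bigr| \leq \bigl| u(x) - u(y) e^{\imath A(\frac{x+y}{2})\cdot(x-y)}\bigr|
\end{equation*}
holding a.e.\ in $\R^2$ (measurability of both sides is clear from the continuity of $A$ and the measurability of $u$).

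Second, I would square the pointwise inequality, divide by $|x-y|^{2}$, and integrate over $\R^{2}$, obtaining
\begin{equation*}
[|u|]^{2} = \frac{1}{2\pi}\iint_{\R^{2}} \frac{\bigl| |u(x)| - |u(y)| \bigr|^{2}}{|x-y|^{2}} \, dx\, dy
\leq \frac{1}{2\pi}\iint_{\R^{2}} \frac{\bigl| u(x) - u(y) e^{\imath A(\frac{x+y}{2})\cdot(x-y)}\bigr|^{2}}{|x-y|^{2}} \, dx\, dy = [u]_{A}^{2},
\end{equation*}
which is exactly the seminorm bound $[|u|] \leq [u]_{A}$.

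Finally, since $u \in H^{1/2}_{A}(\R,\C) \subset L^{2}(\R,\C)$, the function $|u|$ is measurable, real-valued, and satisfies $\||u|\|_{L^{2}(\R)} = \|u\|_{L^{2}(\R)} < \infty$; combined with $[|u|] \leq [u]_{A} < \infty$ this shows $|u| \in H^{1/2}(\R,\R)$. There is essentially no obstacle here beyond keeping the bookkeeping of modulus versus phase straight; the proof is a direct consequence of the reverse triangle inequality in $\C$ together with Fubini to justify the double integration.
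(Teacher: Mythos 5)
Your proof is correct and is essentially the standard argument: the pointwise inequality is just the reverse triangle inequality in $\C$ applied with the unimodular phase factor absorbed into $u(y)$, and integration of the squared inequality gives $[|u|]\leq [u]_{A}$, hence $|u|\in H^{1/2}(\R,\R)$ since $\||u|\|_{L^{2}(\R)}=\|u\|_{L^{2}(\R)}<\infty$. The paper does not spell out a proof but refers to the analogous lemmas in the cited works on fractional magnetic operators, whose argument is the same as yours, so there is nothing to add.
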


\begin{lem}\label{aux}
If $u\in H^{1/2}(\R, \R)$ and $u$ has compact support, then $w=e^{\imath A(0)\cdot x} u \in H^{1/2}_{A}(\R, \C)$.
\end{lem}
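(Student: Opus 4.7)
The plan is to verify the two conditions defining $H^{1/2}_A(\R,\C)$: that $w\in L^2(\R,\C)$, and that the magnetic Gagliardo seminorm $[w]_A$ is finite. The first condition is trivial because $|w(x)|=|u(x)|$ pointwise, so $\|w\|_{L^2}=\|u\|_{L^2}<\infty$. All the real work lies in bounding $[w]_A$.

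My first step would be to isolate the only nontrivial cancellation by factoring out the global phase $e^{\imath A(0) x}$:
$$w(x)-w(y)e^{\imath A(\frac{x+y}{2})(x-y)} = e^{\imath A(0) x}\Bigl[u(x) - u(y)\,e^{\imath g(x,y)}\Bigr],$$
where $g(x,y) := [A(\tfrac{x+y}{2})-A(0)](x-y)$. Taking moduli kills the outer phase, and the triangle inequality then splits the integrand as
$$\bigl|u(x) - u(y)\,e^{\imath g(x,y)}\bigr|^2 \leq 2\,|u(x)-u(y)|^2 + 2\,|u(y)|^2\,|1-e^{\imath g(x,y)}|^2.$$
Integrating against $|x-y|^{-2}\,dx\,dy$, the first piece contributes at most $2\cdot 2\pi\,[u]^2<\infty$ because $u\in H^{1/2}(\R,\R)$. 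So the only quantity to control is
$$J:=\iint_{\R^2}\frac{|u(y)|^2\,|1-e^{\imath g(x,y)}|^2}{|x-y|^2}\,dx\,dy.$$

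For $J$, I would exploit the compact support of $u$ and the H\"older continuity of $A$. Let $R>0$ with $\supp u\subset[-R,R]$ and substitute $z=x-y$; then $g=[A(y+z/2)-A(0)]\,z$ and
$$J=\int_{-R}^{R}|u(y)|^2\left(\int_{\R}\frac{|1-e^{\imath[A(y+z/2)-A(0)]z}|^2}{z^2}\,dz\right)dy.$$
I would split the inner integral at $|z|=1$. On $|z|\leq 1$ use $|1-e^{\imath t}|\leq|t|$; since $|y+z/2|\leq R+\tfrac12$ and $A\in C^{0,\alpha}$, we get $|A(y+z/2)-A(0)|\leq L(R+\tfrac12)^{\alpha}$, so the integrand is pointwise bounded and hence integrable on $[-1,1]$. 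On $|z|>1$ use the trivial bound $|1-e^{\imath g}|\leq 2$, which gives an integrand dominated by $4/z^2$, also integrable. Thus the parenthetical integral is bounded by a constant $C=C(R,\alpha,L)$ independent of $y\in[-R,R]$, and $J\leq C\|u\|_{L^2}^2<\infty$.

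Putting the pieces together yields $[w]_A^2\leq 2[u]^2 + \tfrac{1}{\pi}J<\infty$, so $w\in H^{1/2}_A(\R,\C)$. The only slightly delicate point is the integrability near the diagonal $z=0$, which is why one cannot simply bound $|1-e^{\imath g}|$ by a constant there; the linear-in-$z$ estimate combined with the local boundedness of $A(y+z/2)-A(0)$ on the compact support of $u$ is what saves the argument. No use of smoothness of $A$ beyond $C^{0,\alpha}$ is needed, and the compact support hypothesis on $u$ is essential because it converts the a priori unbounded growth of $|A(\tfrac{x+y}{2})-A(0)|$ into a harmless constant.
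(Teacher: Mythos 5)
Your proof is correct, and it follows essentially the same route as the argument the paper relies on (the lemma is quoted without proof, deferring to Lemma 3.5 in \cite{DS} and Lemma 2.4 in \cite{AD}): factor out the global phase so that only $|u(y)|^{2}\,|1-e^{\imath[A(\frac{x+y}{2})-A(0)](x-y)}|^{2}|x-y|^{-2}$ needs to be controlled, then split near and far from the diagonal, using $|1-e^{\imath t}|\leq|t|$ together with the boundedness of $A(\tfrac{x+y}{2})-A(0)$ on the compact support for $|x-y|\leq 1$ and the trivial bound $2$ for $|x-y|>1$. Your closing remark is also accurate: only local boundedness (in fact continuity) of $A$ is used, and the compact support of $u$ is what makes the phase correction uniformly controllable.
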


We recall the following version of vanishing Lemma of Lions whose proof can be found in Lemma 2.4 in \cite{dSA}:
\begin{lem}\label{lions lemma}
If $(u_{n})$ is a bounded sequence in $H^{1/2}(\R, \R)$ and if
$$
\lim_{n \rightarrow \infty} \sup_{y\in \R} \int_{y-R}^{y+R} |u_{n}|^{2} dx=0
$$
for some $R>0$, then $u_{n}\rightarrow 0$ in $L^{t}(\R^{N})$ for all $t\in (2, \infty)$.
\end{lem}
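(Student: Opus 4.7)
The plan is to establish the vanishing conclusion by a covering argument combined with a localized fractional Sobolev interpolation, compensating for the fact that the embedding $H^{1/2}(\R) \hookrightarrow L^{\infty}(\R)$ fails in dimension one.

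First, I would introduce a countable cover of $\R$ by open intervals $I_k = (kR-R,\, kR+R)$, $k \in \mathbb{Z}$, whose overlap multiplicity is at most $2$. By Theorem \ref{Sembedding} and translation invariance, for every $q \in [2,\infty)$ there is a constant $C_q>0$, independent of $k$, with $\|u\|_{L^q(I_k)} \leq C_q \|u\|_{H^{1/2}(I_k)}$ for every $u \in H^{1/2}(\R,\R)$. Fixing $t\in(2,\infty)$ and an auxiliary $q>t$, the standard H\"older interpolation yields, on each $I_k$,
$$
\|u_n\|_{L^t(I_k)}^t \leq \|u_n\|_{L^2(I_k)}^{t\theta}\,\|u_n\|_{L^q(I_k)}^{t(1-\theta)}, \qquad \theta=\frac{2(q-t)}{t(q-2)} \in (0,\,2/t).
$$
Choosing $q$ close enough to $t$ when $t\leq 4$ (any $q>t$ works when $t>4$), I can arrange $\theta>0$ and $t(1-\theta)\geq 2$ simultaneously. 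Applying the local Sobolev embedding and the bound $\|u_n\|_{L^2(I_k)}^{t\theta} \leq \bigl(\sup_y \|u_n\|_{L^2(I_y)}\bigr)^{t\theta}$ gives
$$
\int_{I_k}|u_n|^t\,dx \leq C\,\Bigl(\sup_{y\in\R}\|u_n\|_{L^2(I_y)}\Bigr)^{t\theta}\,\|u_n\|_{H^{1/2}(I_k)}^{t(1-\theta)}.
$$

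Summing over $k$, I would use the convexity inequality $\sum_k b_k^p \leq \bigl(\sum_k b_k\bigr)^p$ (valid for $b_k\geq 0$ and $p\geq 1$) with $b_k = \|u_n\|_{H^{1/2}(I_k)}^2$ and $p = t(1-\theta)/2 \geq 1$, together with the finite-overlap estimate $\sum_k \|u_n\|_{H^{1/2}(I_k)}^2 \leq C\|u_n\|_{H^{1/2}(\R)}^2$ (the Gagliardo seminorm summation follows from restricting the double integral to $|x-y|<2R$). This yields
$$
\int_{\R}|u_n|^t\,dx \leq C\,\Bigl(\sup_{y\in\R}\int_{y-R}^{y+R}|u_n|^2\,dx\Bigr)^{t\theta/2}\,\|u_n\|_{H^{1/2}(\R)}^{t(1-\theta)}.
$$
Since $(u_n)$ is bounded in $H^{1/2}(\R)$ and the supremum vanishes by hypothesis, the right-hand side tends to $0$, giving $u_n\to 0$ in $L^t(\R)$.

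The main obstacle is precisely the absence of an $L^{\infty}$ endpoint for $H^{1/2}$ in one dimension: one must replace $L^{\infty}$ by $L^q$ with $q$ finite, and then the interpolation exponent $\theta$ and the summation over the cover have to be chosen compatibly. The delicate balance is to keep $\theta>0$ so the sup factor appears with a strictly positive power, while at the same time ensuring $t(1-\theta)\geq 2$ so that the local $H^{1/2}$-norms reassemble into the global norm via the convexity inequality. Both constraints are simultaneously achievable for every $t\in(2,\infty)$, which covers the full range of exponents in the statement.
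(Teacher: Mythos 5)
Your proof is correct: the interpolation exponent $\theta=\frac{2(q-t)}{t(q-2)}$, the feasibility of $\theta>0$ together with $t(1-\theta)\geq 2$ (which amounts to $q(4-t)\leq 4$, attainable for every $t>2$ by taking $q$ close to $t$ when $t<4$), and the finite-overlap resummation of the localized $H^{1/2}$-norms via $\sum_k b_k^{p}\leq(\sum_k b_k)^{p}$ all check out; the only cosmetic slip is that the local embedding you need is the intrinsic fractional Sobolev embedding $H^{1/2}(I)\subset L^{q}(I)$ on a bounded interval (uniform in $k$ by translation invariance), not Theorem \ref{Sembedding}, which concerns the magnetic space. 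The paper itself gives no proof, referring instead to Lemma $2.4$ of \cite{dSA}, and your covering-plus-interpolation argument is precisely the standard Lions-type vanishing proof underlying that reference, so it is essentially the same approach.
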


Next, we prove a list of technical results which will be used along the paper.
\begin{lem}\label{lem2.1ADOM}
Let $(u_{n})\subset H^{1/2}_{A}(\R, \C)$ be a bounded sequence, and set $\sup_{n\in \mathbb{N}} \|u_{n}\|_{\e} = M$. Then, 
\begin{equation*}
\sup_{n\in \mathbb{N}} \int_{\R} (e^{\alpha |u_{n}|^{2}}-1) \, dx <\infty, \quad \mbox{ for every } \alpha \in \left(0, \frac{\omega}{M^{2}}\right). 
\end{equation*}
In particular, if $M\in (0, 1)$, there exists $\alpha_{M}>\omega$ such that  
\begin{equation*}
\sup_{n\in \mathbb{N}} \int_{\R} (e^{\alpha_{M} |u_{n}|^{2}}-1) \, dx <\infty.
\end{equation*}
\end{lem}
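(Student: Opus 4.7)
The plan is to reduce the claimed bound to the real-valued Moser--Trudinger--Ozawa inequality (Theorem \ref{MT}) by passing to the modulus $|u_n|$ through the diamagnetic inequality (Lemma \ref{DI}), and then to absorb the factor $M^{2}$ by a simple rescaling.

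First I would use Lemma \ref{DI} to get $|u_n|\in H^{1/2}(\R,\R)$ with $[|u_n|]\le [u_n]_{A/\varepsilon}$. Combined with the hypothesis $\sup_n\|u_n\|_{\varepsilon}=M$, this gives the two bounds
\[
[|u_n|]\le M\qquad\text{and}\qquad \|u_n\|_{L^{2}(\R)}\le C_{0}M,
\]
where $C_{0}$ is a constant controlled by the structure of the $\varepsilon$-norm (essentially $1/\sqrt{V_{0}}$, since $\|\cdot\|_{\varepsilon}$ will dominate both the magnetic Gagliardo seminorm and a multiple of the $L^{2}$-norm via $V(x)\ge V_{0}$).

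Next I rescale: define $v_n:=|u_n|/M$, so that $[v_n]=[|u_n|]/M\le 1$, which is exactly the normalization required in Theorem \ref{MT}. For a fixed $\alpha\in(0,\omega/M^{2})$, set $\tilde\alpha:=\alpha M^{2}\in(0,\omega)$. The key algebraic observation is that $\alpha|u_n|^{2}=\tilde\alpha v_n^{2}$, so Theorem \ref{MT} applied to $v_n$ with exponent $\tilde\alpha$ yields
\[
\int_{\R}(e^{\alpha|u_n|^{2}}-1)\,dx=\int_{\R}(e^{\tilde\alpha v_n^{2}}-1)\,dx\le C_{\tilde\alpha}\|v_n\|_{L^{2}(\R)}^{2}=\frac{C_{\tilde\alpha}}{M^{2}}\|u_n\|_{L^{2}(\R)}^{2}\le C_{\tilde\alpha}C_{0}^{2},
\]
and the right-hand side is independent of $n$. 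Passing to the supremum in $n$ gives the first claim.

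For the ``in particular'' part, if $M\in(0,1)$ then $\omega/M^{2}>\omega$, so any choice $\alpha_M\in(\omega,\omega/M^{2})$ is admissible in the first part and automatically satisfies $\alpha_M>\omega$. I do not expect a real obstacle in this proof: once the diamagnetic inequality has been used, the argument is essentially a one-line rescaling applied to Theorem \ref{MT}; the only care needed is the bookkeeping in the first step to ensure that the $\|\cdot\|_{\varepsilon}$-bound transfers cleanly to a seminorm bound on $|u_n|$ and an $L^{2}$-bound on $u_n$.
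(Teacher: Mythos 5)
Your proposal is correct and follows essentially the same route as the paper: diamagnetic inequality to pass to $|u_{n}|\in H^{1/2}(\R,\R)$, then Ozawa's inequality (Theorem \ref{MT}) applied after rescaling so that the exponent becomes $\alpha M^{2}<\omega$, with the $L^{2}$-bound coming from the norm. The only cosmetic difference is that you normalize by the fixed constant $M$ while the paper normalizes each $u_{n}$ by $\|u_{n}\|_{A}$; both give a bound independent of $n$, and the choice $\alpha_{M}\in(\omega,\omega/M^{2})$ for the second claim is identical.
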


\begin{proof}
Fix $\alpha \in \left(0, \frac{\omega}{M^{2}}\right)$. Using Lemma \ref{DI} we can see that
$$
\left \|(-\Delta)^{1/4} \frac{|u_{n}|}{\|u_{n}\|_{A}} \right\|_{L^{2}(\R)}^{2} = \frac{\|(-\Delta)^{1/4} |u_{n}|\|_{L^{2}(\R)}^{2}}{\|u_{n}\|_{A}^{2}}= \frac{[|u_{n}|]^{2}}{\|u_{n}\|_{A}^{2}}\leq 1. 
$$
Then, applying Lemma \ref{MT} we deduce that 
\begin{align}\label{2.9DOMS}
\int_{\R} (e^{\alpha |u_{n}|^{2}}-1) \,dx \leq \int_{\R} \left(e^{\alpha M^{2} \left( \frac{|u_{n}|}{\|u_{n}\|_{\e}}\right)^{2}}-1 \right) \,dx\leq C_{\alpha_{M^{2}}} \frac{\|u_{n}\|_{L^{2}(\R)}^{2}}{\|u_{n}\|_{A}^{2}} \leq C_{\alpha_{M^{2}}}. 
\end{align}
If $M\in (0, 1)$, then $\omega < \frac{\omega}{M^{2}}$, so we can find $\alpha_{M}\in \left( 0, \frac{\omega}{M^{2}}\right)$. Hence, we can use the above inequality replacing $\alpha$ by $\alpha_{M}$.  
\end{proof}

\begin{lem}\label{lem2.2ADOM}
Let $(u_{n})\subset H^{1/2}_{A}(\R, \C)$ be a sequence with
\begin{align}\label{2.2ADOM}
\limsup_{n\ri \infty} \|u_{n}\|_{A}^{2}<1. 
\end{align}
Then, there exists $t>1$ sufficiently close to $1$ and $C>0$ such that 
\begin{align*}
\int_{\R} (e^{\omega |u_{n}|^{2}}- 1)^{t}\, dx \leq C \quad \mbox{ for all } n\in \mathbb{N}. 
\end{align*}
\end{lem}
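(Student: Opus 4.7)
The strategy is to reduce the claim to Lemma \ref{lem2.1ADOM} by means of an elementary pointwise inequality. First I establish
$$
(e^{a}-1)^{t} \leq e^{ta}-1 \qquad \text{for all } a \geq 0,\ t \geq 1.
$$
Both sides vanish at $a=0$, so it suffices to compare derivatives: the derivative of the left-hand side equals $t(e^{a}-1)^{t-1}e^{a} \leq t e^{(t-1)a}\cdot e^{a} = te^{ta}$, since $e^{a}-1 \leq e^{a}$ and $t-1 \geq 0$, and the right-hand derivative is exactly $te^{ta}$.

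Second, I exploit the hypothesis \eqref{2.2ADOM} to pick $\beta \in (0,1)$ and $n_{0}\in \mathbb{N}$ such that $\|u_{n}\|_{A}^{2} \leq \beta$ for every $n\geq n_{0}$. The tail sequence $(u_{n})_{n\geq n_{0}}$ satisfies $\sup_{n\geq n_{0}}\|u_{n}\|_{A} = \sqrt{\beta} < 1$, so Lemma \ref{lem2.1ADOM} produces some $\alpha_{\beta}>\omega$ and a constant $K>0$ with
$$
\sup_{n\geq n_{0}} \int_{\R}(e^{\alpha_{\beta}|u_{n}|^{2}}-1)\, dx \leq K.
$$
Since $\alpha_{\beta}/\omega > 1$, I can select $t>1$ so close to $1$ that $t\omega \leq \alpha_{\beta}$. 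Combining the pointwise inequality from the first step with the monotonicity of $a\mapsto e^{a}-1$ then yields
$$
\int_{\R}(e^{\omega|u_{n}|^{2}}-1)^{t}\, dx \leq \int_{\R}(e^{t\omega|u_{n}|^{2}}-1)\, dx \leq \int_{\R}(e^{\alpha_{\beta}|u_{n}|^{2}}-1)\, dx \leq K
$$
for all $n\geq n_{0}$. The finitely many remaining indices $n<n_{0}$ contribute only finite quantities, since for each such $n$ we have $|u_{n}|\in H^{1/2}(\R,\R)$ by Lemma \ref{DI} and a direct scaled application of Theorem \ref{MT} to $|u_{n}|/[|u_{n}|]$ (together with the same pointwise inequality, for a possibly smaller $t$ close to $1$) yields finiteness. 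Taking $C$ to be the maximum of $K$ and these finitely many values gives the required uniform bound.

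The main obstacle is securing that the exponent $t>1$ can be chosen strictly greater than $1$ and uniformly in $n$. This is precisely guaranteed by the strict inequality $\alpha_{\beta}>\omega$ in Lemma \ref{lem2.1ADOM}, which itself relied on the strict bound $M<1$; without it, one would only manage to push $t$ up to $1$, which is not enough to obtain the higher integrability required here. Note that it is exactly the strictness $\limsup \|u_{n}\|_{A}^{2}<1$ in the hypothesis (rather than $\leq 1$) that makes room for this improved exponent, and all subsequent steps are then bookkeeping.
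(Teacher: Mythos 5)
Your estimate for the tail $n\geq n_{0}$ is correct and is essentially the paper's own argument: a pointwise inequality comparing $(e^{\omega s^{2}}-1)^{t}$ with $e^{\gamma s^{2}}-1$ for a slightly larger exponent $\gamma$, followed by Lemma \ref{lem2.1ADOM}. Your inequality $(e^{a}-1)^{t}\leq e^{ta}-1$ is in fact a cleaner form of the paper's $(e^{\omega s^{2}}-1)^{t}\leq C_{\beta}(e^{\omega\beta s^{2}}-1)$ with $\beta>t$, and choosing $t>1$ with $t\omega\leq\alpha_{\beta}$, where $\alpha_{\beta}>\omega$ comes from the second part of Lemma \ref{lem2.1ADOM}, plays exactly the role of the paper's choice $\beta m<1$.

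The weak point is your treatment of the finitely many indices $n<n_{0}$. The hypothesis gives no control there, so $[|u_{n}|]$ may well be $\geq 1$; in that case a direct scaled application of Theorem \ref{MT} to $|u_{n}|/[|u_{n}|]$ only yields finiteness of $\int_{\R}(e^{\gamma|u_{n}|^{2}}-1)\,dx$ for $\gamma<\omega/[|u_{n}|]^{2}\leq\omega$, whereas after your pointwise inequality you need the exponent $t\omega>\omega$, and no choice of $t>1$, however close to $1$, repairs this. What is actually needed is the standard full-range fact that $\int_{\R}(e^{\alpha u^{2}}-1)\,dx<\infty$ for every fixed $u\in H^{1/2}(\R,\R)$ and \emph{every} $\alpha>0$, which is proved by splitting $u$ into a small-seminorm remainder plus a bounded part (see e.g. \cite{IS, dSA}), not by scaling alone. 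To be fair, the paper glosses over the same point: it simply puts the finitely many integrals $\int_{\R}(e^{\omega|u_{n}|^{2}}-1)^{t}\,dx$, $n\leq n_{0}$, into the maximum defining $C$ without justifying their finiteness. So your proof has the same structure and the same implicit ingredient as the paper's; the only genuine defect is that the justification you offer for that ingredient would fail whenever $[|u_{n}|]\geq 1$, and it should be replaced by (a reference to) the splitting argument just described.
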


\begin{proof}
In view of \eqref{2.2ADOM} we can find $m\in (0, 1)$ and $n_{0}\in \mathbb{N}$ such that 
\begin{align*}
\|u_{n}\|_{A}^{2}<m<1 \quad \mbox{ for all } n\geq n_{0}. 
\end{align*}
Take $t>1$ sufficiently close to $1$ and $\beta>t$ such that $\beta m<1$. Recalling that 
\begin{align*}
\left(e^{\omega s^{2}} -1 \right)^{t} \leq C (e^{\omega \beta s^{2}}-1) \quad \mbox{ for all } s\in \R, 
\end{align*}
for some $C= C_{\beta}>0$, we can deduce that 
\begin{align*}
\int_{\R} \left(e^{\omega |u_{n}|^{2}}- 1 \right)^{t}\, dx\leq C \int_{\R} \left(e^{\beta m \omega \left(\frac{|u_{n}|}{\|u_{n}\|_{A}}\right)^{2}}- 1 \right)\, dx \quad \mbox{ for all } n\geq n_{0}.
\end{align*}
Then, applying Lemma \ref{lem2.1ADOM} we can see that 
\begin{align*}
\int_{\R} \left(e^{\omega |u_{n}|^{2}}- 1 \right)^{t}\, dx\leq C_{0} \quad \mbox{ for all } n\geq n_{0},
\end{align*}
for some $C_{0}>0$. Taking 
$$
C= \max \left\{C_{0}, \int_{\R} \left(e^{\omega |u_{1}|^{2}}- 1 \right)^{t}\, dx, \dots, \int_{\R} \left(e^{\omega |u_{n_{0}}|^{2}}- 1\right)^{t}\, dx \right \}
$$
we obtain the thesis. 
\end{proof}

\begin{lem}\label{cor2.3ADOM}
Let $(u_{n})\subset H^{1/2}_{A}(\R, \C)$ be a sequence satisfying \eqref{2.2ADOM}. If $u_{n} \rightharpoonup u$ in $H^{1/2}_{A}(\R, \C)$, then, up to a subsequence, we have 
\begin{align}
&\int_{-R}^{R} F(|u_{n}|^{2})\, dx \ri  \int_{-R}^{R} F(|u|^{2}) \, dx, \label{2.3ADOM} \\
&\int_{-R}^{R} f(|u_{n}|^{2})|u_{n}|^{2} \ri \int_{-R}^{R} f(|u|^{2})|u|^{2}\, dx, \label{2.4ADOM} \\
&\Re \int_{-R}^{R} f(|u_{n}|^{2}) u_{n}\overline{\phi} \, dx \ri \Re \int_{-R}^{R} f(|u|^{2}) u\overline{\phi} \, dx \quad \mbox{ for all } \phi \in \h \mbox{ and } R>0. \label{2.5ADOM}
\end{align}
\end{lem}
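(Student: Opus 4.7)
The approach is to combine pointwise almost-everywhere convergence on $[-R,R]$, furnished by the compact embedding of Theorem \ref{Sembedding}, with a uniform $L^s$-bound on each of the three integrands for some $s>1$, furnished by Lemma \ref{lem2.2ADOM} and the growth hypotheses on $f$; Vitali's convergence theorem on the finite-measure set $[-R,R]$ then closes the argument.

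First, by Theorem \ref{Sembedding} one may pass to a subsequence (not relabeled) such that $u_n \to u$ a.e.\ on $\mathbb{R}$. Hence $|u_n|^2 \to |u|^2$ a.e., and by the continuity of $f$ and $F$ (with $f(0)=F(0)=0$) the three integrands in \eqref{2.3ADOM}--\eqref{2.5ADOM} converge a.e.\ on $[-R,R]$ to their $u$-counterparts. Next, combining $(f_1)$--$(f_3)$ with the behavior of $f$ near $0$, for any prescribed $\alpha>\alpha_0$ there is a constant $C_\alpha>0$ with
\begin{equation*}
f(t)\sqrt{t} \leq C_\alpha\,(e^{\alpha t}-1) \quad\text{and}\quad F(t) \leq C_\alpha \sqrt{t}\,(e^{\alpha t}-1), \qquad \text{for every } t \geq 0,
\end{equation*}
so that, setting $t=|u_n|^2$, one obtains the pointwise majorizations
\begin{equation*}
F(|u_n|^2) + f(|u_n|^2)|u_n|^2 \leq C\,|u_n|(e^{\alpha |u_n|^2}-1), \qquad \bigl|f(|u_n|^2)\,u_n\,\overline{\phi}\bigr| \leq C\,|\phi|\,(e^{\alpha|u_n|^2}-1).
\end{equation*}

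Now I would invoke Lemma \ref{lem2.2ADOM}: assumption \eqref{2.2ADOM} produces some $t_0>1$ and a constant $C>0$ with $\int_\mathbb{R}(e^{\omega|u_n|^2}-1)^{t_0}\,dx\leq C$ for every $n$. Taking $\alpha$ just above $\alpha_0$ and exploiting the room $\omega/M^2>\omega$ offered by Lemma \ref{lem2.1ADOM} (available because $\limsup\|u_n\|_A^2<1$), one arranges $\{e^{\alpha|u_n|^2}-1\}$ to be uniformly bounded in $L^{t_0}(\mathbb{R})$. Hölder's inequality with conjugate exponents $t_0/s$ and $t_0/(t_0-s)$ for $s>1$ sufficiently close to $1$, combined with the embedding $H^{1/2}_A(\mathbb{R},\mathbb{C})\hookrightarrow L^q(\mathbb{R},\mathbb{C})$ for every $q\in[2,\infty)$ and with $\phi\in L^q(\mathbb{R},\mathbb{C})$ for all $q\geq 2$, then gives a uniform $L^s([-R,R])$-bound on each of the three integrands. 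Since $L^s$-boundedness with $s>1$ on a set of finite measure implies equi-integrability, Vitali's theorem upgrades the pointwise convergence into $L^1([-R,R])$-convergence, which is precisely the content of \eqref{2.3ADOM}--\eqref{2.5ADOM}.

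The main technical point is the coordination of the Hölder exponents so that the exponential factor falls in the integrability range delivered by Lemma \ref{lem2.2ADOM}, while the polynomial/embedding factor remains in a Lebesgue space controlled by $H^{1/2}_A$. Once $t_0>1$ is fixed, choosing $s$ slightly larger than $1$ makes both requirements simultaneously satisfiable, so no further estimate is needed.
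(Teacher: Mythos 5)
Your proof is correct, and it shares the paper's key ingredients (a.e. convergence from the compact local embedding of Theorem \ref{Sembedding}, the exponential majorization coming from $(f_1)$--$(f_2)$, and the uniform exponential integrability from Lemmas \ref{lem2.1ADOM}--\ref{lem2.2ADOM}, made available by \eqref{2.2ADOM}), but the limit passage is carried out differently. The paper dominates the integrands by $C(|u_n|^{2}+h_n)$ with $h_n=C(e^{\alpha\beta|u_n|^{2}}-1)$ bounded in $L^{q}(\R)$, extracts weak $L^{q}$ convergence of $h_n$, tests against $\chi_{(-R,R)}\in L^{q'}$ to get convergence of the $L^{1}(-R,R)$ norms, upgrades this to strong $L^{1}(-R,R)$ convergence of the dominating sequence via the Brezis--Lieb lemma, and concludes with a generalized dominated convergence argument. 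You instead put the $L^{t_0}$ bound on the exponential factor together with the $L^{r}$ bounds on $|u_n|$ (resp.\ $|\phi|$) through H\"older to obtain a uniform $L^{s}([-R,R])$ bound, $s>1$, on the integrands themselves, whence equi-integrability on the finite-measure set and Vitali's theorem; this avoids both the weak-convergence step and Brezis--Lieb, at the modest cost of the exponent bookkeeping $1<s<t_0$ and of checking the pointwise bound $F(t)\leq C\sqrt{t}\,(e^{\alpha t}-1)$ near $t=0$ (which does require $f\in C^{1}$, $f(0)=0$ from $(f_1)$, since $\sqrt{t}$ alone is not controlled by $e^{\alpha t}-1$ there — your appeal to ``the behavior of $f$ near $0$'' covers this, but it is worth making explicit). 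Note also that replacing the exponent $\omega$ of Lemma \ref{lem2.2ADOM} by $\alpha>\alpha_0$ uses the same implicit compatibility between $\alpha_0$ and $\omega$ (via the room $\omega/M^{2}>\omega$) that the paper itself relies on in $(g_2)$ and in its own proof, so this is not a gap relative to the paper.
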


\begin{proof}
Using $(f_{1})$ and $(f_{2})$ we can see that for every $\beta>1$ and $\alpha>\alpha_{0}$ there exists $C>0$ such that 
\begin{align*}
|F(t^{2})|\leq C \left(|t|^{2} + (e^{\alpha \beta |t|^{2}} -1) \right) 
\end{align*}
which implies that 
\begin{align}\label{2.6ADOM}
|F(|u_{n}|^{2})|\leq C\left(|u_{n}|^{2} + (e^{\alpha \beta |u_{n}|^{2}} -1)\right).  
\end{align}
Put
\begin{align*}
h_{n}(x):= C \left(e^{\alpha \beta |u_{n}|^{2}} -1\right)
\end{align*}
and fix $\beta, q>1$ sufficiently close to $1$ and $\alpha$ sufficiently close to $\alpha_{0}$ such that $h_{n}\in L^{q}(\R, \R)$ and $\sup_{n\in \mathbb{N}} \|h_{n}\|_{L^{q}(\R)}<\infty$ in view of Lemma \ref{lem2.2ADOM}. Up to a subsequence, we can deduce that 
$$
h_{n}\rightharpoonup h=C(e^{\alpha \beta |u|^{2}} -1)\quad \mbox{ in } L^{q}(\R, \R). 
$$ 
Now, we show that
\begin{align}\label{2.61ADOM}
h_{n}\ri h \quad \mbox{ in } L^{1}(-R, R) \mbox{ for any } R>0. 
\end{align}
Indeed, denoting by $\chi_{R}$ the characteristic function in $(-R, R)$, and observing that $\chi_{R}\in L^{q'}(\R, \R)$, we can see that 
$$
\int_{\R} h_{n}\chi_{R} \,dx\rightarrow \int_{\R} h\chi_{R} \,dx,
$$
that is 
$$
\int_{-R}^{R} h_{n} \,dx\rightarrow \int_{-R}^{R} h \,dx.
$$
Since $h_{n}, h\geq 0$, this means that $\|h_{n}\|_{L^{1}(-R, R)}\rightarrow \|h\|_{L^{1}(-R, R)}$. Observing that $h_{n}\rightarrow h$ a.e. in $\R$, we can use Brezis-Lieb  Lemma to conclude that $\|h_{n}-h\|_{L^{1}(-R, R)}\rightarrow 0$.
Then, recalling that $|u_{n}|\rightarrow |u|$ in $L^{2}(-R, R)$, and using \eqref{2.6ADOM} and \eqref{2.61ADOM}, we can apply the Dominated Convergence Theorem to deduce that \eqref{2.3ADOM} is verified.
Similar arguments show that \eqref{2.4ADOM} and \eqref{2.5ADOM} hold. 
\end{proof}

\begin{lem}\label{vanishingF}
Let $(u_{n})\subset H^{1/2}_{A}(\R, \C)$ be a sequence satisfying \eqref{2.2ADOM}. If there exists $R>0$ such that 
$$
\lim_{n \rightarrow \infty} \sup_{y\in \R} \int_{y-R}^{y+R} |u_{n}|^{2} dx=0, 
$$
then 
$$
\lim_{n\ri \infty} \int_{\R} F(|u_{n}|^{2})\, dx = \lim_{n\ri \infty} \int_{\R} f(|u_{n}|^{2}) |u_{n}|^{2} \, dx =0.
$$
\end{lem}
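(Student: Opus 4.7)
The strategy is to pass from $u_n$ to $|u_n|$ via the diamagnetic inequality, apply Lions' concentration-compactness lemma to obtain strong convergence $|u_n|\to 0$ in $L^q(\R,\R)$ for every $q\in(2,\infty)$, and then control the nonlinear integrals through a growth estimate on $f$ combined with a uniform Moser-Trudinger bound coming from Lemma \ref{lem2.2ADOM}.

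First, by Lemma \ref{DI}, $|u_n|\in H^{1/2}(\R,\R)$ with $[|u_n|]\leq [u_n]_A$, so $(|u_n|)$ is bounded in $H^{1/2}(\R,\R)$. The hypothesis on $(u_n)$ gives $\sup_{y\in\R}\int_{y-R}^{y+R}|u_n|^2\,dx\to 0$, so Lemma \ref{lions lemma} yields $|u_n|\to 0$ in $L^q(\R,\R)$ for every $q\in(2,\infty)$.

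Next, using $(f_1)$--$(f_2)$ and continuity of $f$ near the origin (together with $f\equiv 0$ on $(-\infty,0]$), I would show that for every $\alpha>\alpha_0$ and every $q>2$ there exists $C=C(\alpha,q)>0$ such that
\begin{equation*}
f(|t|^2)|t|^2 \leq C\,|t|^q + C\,|t|^q\bigl(e^{\alpha|t|^2}-1\bigr)\quad\text{for every } t\in\R.
\end{equation*}
Integrating this pointwise bound over $\R$ with $t=u_n(x)$ gives
\begin{equation*}
\int_{\R} f(|u_n|^2)|u_n|^2\,dx \leq C\,\|u_n\|_{L^q(\R)}^{q} + C\int_{\R} |u_n|^q\bigl(e^{\alpha|u_n|^2}-1\bigr)\,dx.
\end{equation*}
The first term tends to $0$ by the previous step. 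For the second term I would apply H\"older with exponents $t>1$ and $t'=t/(t-1)$ to get
\begin{equation*}
\int_{\R} |u_n|^q\bigl(e^{\alpha|u_n|^2}-1\bigr)\,dx \leq \|u_n\|_{L^{qt'}(\R)}^{q}\left(\int_{\R}\bigl(e^{\alpha|u_n|^2}-1\bigr)^{t}\,dx\right)^{1/t}.
\end{equation*}
Since $qt'>2$, the first factor tends to $0$. For the second factor, I would pick $\alpha>\alpha_0$ so close to $\alpha_0$ that $\alpha\,\limsup_n\|u_n\|_A^2<\omega$; then the rescaled sequence $v_n=\sqrt{\alpha/\omega}\,u_n$ satisfies $\limsup_n\|v_n\|_A^2<1$, and Lemma \ref{lem2.2ADOM} applied to $(v_n)$ yields $t>1$ (close to $1$) with $\sup_n\int_{\R}(e^{\omega|v_n|^2}-1)^{t}\,dx<\infty$, i.e.\ $\sup_n\int_{\R}(e^{\alpha|u_n|^2}-1)^{t}\,dx<\infty$. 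Combining these estimates gives $\int_{\R} f(|u_n|^2)|u_n|^2\,dx\to 0$. Finally, the convergence $\int_{\R} F(|u_n|^2)\,dx\to 0$ follows immediately from $(f_3)$, which yields $F(|u_n|^2)\leq \tfrac{2}{\theta}|u_n|^2 f(|u_n|^2)$ pointwise.

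The main delicate point is the tuning of the parameters $\alpha,\beta,t,q$: one must choose $\alpha>\alpha_0$ small enough (together with a scaling factor) so that Lemma \ref{lem2.2ADOM} applies and returns some $t>1$, while simultaneously $qt'$ remains a valid exponent for the Lions-type strong convergence. The rescaling argument absorbing $\alpha/\omega$ into the norm is the key device that makes these choices compatible thanks to the strict inequality $\limsup_n\|u_n\|_A^2<1$.
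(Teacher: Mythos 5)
Your proposal is correct and follows essentially the same route as the paper: diamagnetic inequality plus Lions' lemma (Lemma \ref{lions lemma}) give $|u_n|\to 0$ in $L^{q}(\R,\R)$ for $q\in(2,\infty)$, the exponential part is handled by H\"older together with the uniform bound of Lemma \ref{lem2.2ADOM}, and $(f_3)$ then controls $F$; the paper simply uses the bound $f(s)s\le \delta s+C_{\delta}\sqrt{s}\,(e^{\omega s}-1)$ and applies Lemma \ref{lem2.2ADOM} directly to $(u_n)$, whereas you keep $\alpha$ near $\alpha_0$ and rescale $v_n=\sqrt{\alpha/\omega}\,u_n$ -- a cosmetic difference resting on the same implicit convention $\alpha_0\le\omega$ already built into $(g_2)$. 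One small correction: your pointwise bound $f(|t|^{2})|t|^{2}\le C|t|^{q}+C|t|^{q}(e^{\alpha|t|^{2}}-1)$ holds for $q\in(2,4]$ (near $t=0$ one only gets $f(t^{2})t^{2}=O(t^{4})$ from $(f_1)$), not for every $q>2$, but a single such $q$ is all your argument needs since $qt'>2$ is automatic.
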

\begin{proof}
In view of Lemma \ref{lions lemma} we know that 
\begin{align}\label{2.44ADOM}
|u_{n}|\rightarrow 0  \quad\mbox{ in } L^{q}(\R, \R) \mbox{ for any } q\in (2, \infty).
\end{align}
Since $(u_{n})$ verifies \eqref{2.2ADOM}, we can use Lemma \ref{lem2.2ADOM}  to find  $t>1$ sufficiently close to $1$ and $C>0$ such that 
\begin{align*}
\int_{\R} \left(e^{\omega |u_{n}|^{2}}- 1 \right)^{t}\, dx \leq C \quad \mbox{ for all } n\in \mathbb{N}. 
\end{align*}
Then, from the growth assumptions on $f$ and applying H\"older inequality we have
\begin{align*}
\int_{\R} f(|u_{n}|^{2})|u_{n}|^{2}dx&\leq \delta \int_{\R} |u_{n}|^{2}\, dx+C_{\delta} \int_{\R} |u_{n}| (e^{\omega |u_{n}|^{2}}-1) \,dx \\
&\leq C\delta+C\|u_{n}\|_{L^{t'}(\R)} \quad \mbox{ for any } \delta>0, 
\end{align*}
which together with \eqref{2.44ADOM} implies that
$$
\int_{\R} f(|u_{n}|^{2})|u_{n}|^{2}dx\rightarrow 0 \quad \mbox{ as } n\rightarrow \infty.
$$
Consequently, using $(f_3)$ we can deduce that
$$
\int_{\R} F(|u_{n}|^{2}) \,dx\rightarrow 0 \quad \mbox{ as } n\rightarrow \infty.
$$
\end{proof}

\noindent
Finally, we give a variant of Lemma $5$ in \cite{PP} in the one dimensional case.
\begin{lem}\label{lemPSI}
Let $u\in H^{1/2}(\R, \R)$ and $\phi\in \mathcal{C}^{\infty}_{c}(\R, \R)$ such that $0\leq \phi\leq 1$, $\phi=1$ in $(-1,1)$ and $\phi=0$ in $\R\setminus (-2,2)$. Set $\phi_{r}(x)=\phi(\frac{x}{r})$.  Then
$$
\lim_{r\rightarrow \infty} [u \phi_{r}-u]=0 \quad \mbox{ and } \quad \lim_{r\rightarrow \infty} \|u\phi_{r}-u\|_{L^{2}(\R)}=0.
$$
\end{lem}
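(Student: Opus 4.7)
My plan is to handle the two convergence statements separately, with the $L^{2}$ one being immediate and the Gagliardo seminorm one carrying essentially all the work. For $\|u\phi_{r}-u\|_{L^{2}(\R)}\to 0$, I would simply note that $\phi_{r}(x)\to 1$ pointwise on $\R$ and $|u\phi_{r}-u|\leq 2|u|\in L^{2}(\R)$, so dominated convergence gives the conclusion.

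For the seminorm, writing $\psi_{r}:=\phi_{r}-1$ so that $u\phi_{r}-u=u\psi_{r}$, I would use the Leibniz-type identity
\[
(u\psi_{r})(x)-(u\psi_{r})(y)=\psi_{r}(x)(u(x)-u(y))+u(y)(\psi_{r}(x)-\psi_{r}(y))
\]
together with $(a+b)^{2}\leq 2a^{2}+2b^{2}$ to bound $[u\phi_{r}-u]^{2}$ by $T_{1}+T_{2}$, where
\[
T_{1}=\frac{1}{\pi}\iint_{\R^{2}}\frac{\psi_{r}(x)^{2}\,|u(x)-u(y)|^{2}}{(x-y)^{2}}\,dxdy,\qquad T_{2}=\frac{1}{\pi}\iint_{\R^{2}}\frac{u(y)^{2}\,|\psi_{r}(x)-\psi_{r}(y)|^{2}}{(x-y)^{2}}\,dxdy.
\]

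The term $T_{1}$ tends to zero by dominated convergence: $|\psi_{r}|\leq 1$ uniformly, $\psi_{r}\to 0$ pointwise a.e., and the dominating function $|u(x)-u(y)|^{2}/(x-y)^{2}$ belongs to $L^{1}(\R^{2})$ because $u\in H^{1/2}(\R,\R)$.

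The term $T_{2}$ is the main obstacle, and the key is to exploit the scaling of $\phi_{r}$. Observing that $|\psi_{r}(x)-\psi_{r}(y)|=|\phi_{r}(x)-\phi_{r}(y)|$, I would rewrite the inner integral (with $z=x-y$) as $g_{r}(y)=\int_{\R}|\phi_{r}(y+z)-\phi_{r}(y)|^{2}/z^{2}\,dz$ and split at $|z|=r$. On $\{|z|\leq r\}$ I use the Lipschitz estimate $\|\phi_{r}'\|_{\infty}\leq \|\phi'\|_{\infty}/r$, yielding a contribution of order $1/r$; on $\{|z|>r\}$ I use $|\phi_{r}|\leq 1$ together with $\int_{|z|>r}z^{-2}\,dz=2/r$. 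This produces the uniform bound $g_{r}(y)\leq C/r$, and hence $T_{2}\leq (C/r)\|u\|_{L^{2}(\R)}^{2}\to 0$ as $r\to\infty$. The balance between the quadratic decay of $\|\phi_{r}'\|_{\infty}^{2}$ and the length $2r$ of the near region is precisely the one-dimensional scale-invariance of the $H^{1/2}$ seminorm, which is why the estimate closes.
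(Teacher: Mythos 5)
Your proof is correct, and your initial decomposition (Leibniz splitting of $(u\psi_{r})(x)-(u\psi_{r})(y)$ plus $(a+b)^{2}\leq 2a^{2}+2b^{2}$, then dominated convergence for the term carrying $|u(x)-u(y)|^{2}$) is exactly the paper's splitting into $A_{r}$ and $B_{r}$. Where you genuinely diverge is in the main term $T_{2}$: you freeze $y$, substitute $z=x-y$, and prove the uniform pointwise bound
$$
\int_{\R}\frac{|\phi_{r}(y+z)-\phi_{r}(y)|^{2}}{z^{2}}\,dz\leq \frac{2\|\phi'\|_{L^{\infty}(\R)}^{2}+2}{r}\quad\mbox{ for every } y\in\R,
$$
by splitting only the $z$-integral at $|z|=r$ (Lipschitz bound $\|\phi_{r}'\|_{L^{\infty}(\R)}\leq \|\phi'\|_{L^{\infty}(\R)}/r$ near, boundedness of $\phi_{r}$ far), and then simply integrate against $u(y)^{2}\in L^{1}(\R)$ to get $T_{2}\leq C r^{-1}\|u\|_{L^{2}(\R)}^{2}$. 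The paper instead keeps $|u(x)|^{2}$ in place, decomposes $\R^{2}$ into the three regions $X^{1}_{r}, X^{2}_{r}, X^{3}_{r}$, and for the tail piece invokes H\"older's inequality with an exponent $\gamma>1$ together with the embedding $H^{1/2}(\R,\R)\subset L^{2\gamma'}(\R,\R)$, ending with a bound of the form $Cr^{-1}+CK^{1-2\gamma}r^{2-2\gamma}$. Your route is shorter and more elementary: it needs no region decomposition, no H\"older step, and no Sobolev embedding, and it gives the clean explicit rate $O(r^{-1})$ for this term; the scale balance you point out (quadratic decay of $\|\phi_{r}'\|_{\infty}^{2}$ against the length $\sim r$ of the near region) is precisely why the estimate closes uniformly in $y$. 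The paper's heavier machinery is essentially an adaptation of the higher-dimensional argument of Palatucci--Pisante and buys nothing extra in this one-dimensional setting. Your treatments of $T_{1}$ and of the $L^{2}$ convergence coincide with the paper's.
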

\begin{proof}
Since $\phi_{r}u\rightarrow u$ a.e. in $\R$ as $r\rightarrow \infty$, $0\leq \phi\leq 1$ and $u\in L^{2}(\R, \R)$, we can use the Dominated Convergence Theorem to see that $\lim_{r\rightarrow \infty} \|u\phi_{r}-u\|_{L^{2}(\R)}=0$.

Therefore, we only need to show the first relation of limit.
Let us note that
{\small\begin{align*}
[u\phi_{r}-u]^{2}&\leq 2 \left(\iint_{\R^{2}}  |u(x)|^{2}\frac{|\phi_{r}(x)-\phi_{r}(y)|^{2}}{|x-y|^{2}}dx dy+ \iint_{\R^{2}} \frac{|\phi_{r}(x)-1|^{2}|u(x)-u(y)|^{2}}{|x-y|^{2}}dx dy\right) \\
&=:2(A_{r}+B_{r}).
\end{align*}}Taking into account that $|\phi_{r}(x)-1|\leq 2$, $|\phi_{r}(x)-1|\rightarrow 0$ a.e. in $\R$ and $u\in H^{\frac{1}{2}}(\R, \R)$, and applying the Dominated Convergence Theorem we get
$$
B_{r}\rightarrow 0 \quad \mbox{ as } r\rightarrow \infty.
$$
Now we show that
$$
A_{r}\rightarrow 0 \quad \mbox{ as } r\rightarrow \infty.
$$
Firstly, we observe that $\R^{2}$ can be written as
{\small\begin{align*}
\R^{2}&=((\R \setminus (-2r, 2r))\times (\R \setminus (-2r, 2r)))\cup ((-2r, 2r)\times \R) \cup ((\R \setminus (-2r, 2r))\times (-2r, 2r))\\
&=: X^{1}_{r}\cup X^{2}_{r} \cup X^{3}_{r},
\end{align*}}
so that
\begin{align}\label{Pa1SA}
&\iint_{\R^{2}} |u(x)|^{2} \frac{|\phi_{r}(x)-\phi_{r}(y)|^{2}}{|x-y|^{2}} \, dx dy \nonumber\\
&=\iint_{X^{1}_{r}} |u(x)|^{2} \frac{|\phi_{r}(x)-\phi_{r}(y)|^{2}}{|x-y|^{2}} \, dx dy
+\iint_{X^{2}_{r}} |u(x)|^{2} \frac{|\phi_{r}(x)-\phi_{r}(y)|^{2}}{|x-y|^{2}} \, dx dy \nonumber\\
&+ \iint_{X^{3}_{r}} |u(x)|^{2} \frac{|\phi_{r}(x)-\phi_{r}(y)|^{2}}{|x-y|^{2}} \, dx dy.
\end{align}
Next, we estimate each integral in (\ref{Pa1SA}).
Recalling that $\phi=0$ in $\R\setminus (-2, 2)$, we have
\begin{align}\label{Pa2SA}
\iint_{X^{1}_{r}} |u(x)|^{2} \frac{|\phi_{r}(x)-\phi_{r}(y)|^{2}}{|x-y|^{2}} \, dx dy=0.
\end{align}
Using $0\leq \phi\leq 1$, $\|\phi'\|_{L^{\infty}(\R)}\leq C$ and the Mean Value Theorem, we can see that
\begin{align}\label{Pa3SA}
 &\iint_{X^{2}_{r}} |u(x)|^{2} \frac{|\phi_{r}(x)-\phi_{r}(y)|^{2}}{|x-y|^{2}} \, dx dy \nonumber\\
&=\int_{-2r}^{2r} \,dx \int_{\{y\in \R: |y-x|\leq r\}} |u(x)|^{2} \frac{|\phi_{r}(x)-\phi_{r}(y)|^{2}}{|x-y|^{2}} \, dy \nonumber
&+\int_{-2r}^{2r} \, dx \int_{\{y\in \R: |y-x|> r\}} |u(x)|^{2} \frac{|\phi_{r}(x)-\phi_{r}(y)|^{p}}{|x-y|^{2}} \, dy  \nonumber\\
&\leq C r^{-2} \|\phi'\|_{L^{\infty}(\R)}^{2} \int_{-2r}^{2r} \, dx \int_{\{y\in \R: |y-x|\leq r\}} |u(x)|^{2} \, dy \nonumber \\
&+ C \int_{-2r}^{2r} \, dx \int_{\{y\in \R: |y-x|> r\}} \frac{|u(x)|^{2}}{|x-y|^{2}} \, dy \nonumber\\
&\leq C r^{-1} \int_{-2r}^{2r} |u(x)|^{2} \, dx+C r^{-1} \int_{-2r}^{2r} |u(x)|^{2} \, dx \nonumber \\
&=Cr^{-1} \int_{-2r}^{2r} |u(x)|^{2} \, dx.
\end{align}
Concerning the integral on $X^{3}_{r}$, we can note that
\begin{align}\label{Pa4SA}
&\iint_{X^{3}_{r}} |u(x)|^{2} \frac{|\phi_{r}(x)-\phi_{r}(y)|^{2}}{|x-y|^{2}} \, dx dy \nonumber\\
&=\int_{\R\setminus (-2r, 2r)} \, dx \int_{\{y\in (-2r, 2r): |y-x|\leq r\}} |u(x)|^{2} \frac{|\phi_{r}(x)-\phi_{r}(y)|^{2}}{|x-y|^{2}} \, dy \nonumber\\
&+\int_{\R\setminus (-2r, 2r)} \,dx \int_{\{y\in (-2r, 2r): |y-x|>r\}} |u(x)|^{2} \frac{|\phi_{r}(x)-\phi_{r}(y)|^{2}}{|x-y|^{2}} \, dy=: C_{r}+ D_{r}.
\end{align}
Applying the Mean Value Theorem and observing that if $(x, y) \in (\R\setminus (-2r, 2r))\times (-2r, 2r)$ and $|x-y|\leq r$ then $|x|\leq 3r$, we get
\begin{align}\label{Pa5SA}
C_{r}&\leq r^{-2} \|\phi'\|_{L^{\infty}(\R)}^{2} \int_{-3r}^{3r} \, dx \int_{\{y\in (-2r, 2r): |y-x|\leq r\}} |u(x)|^{2} \, dy \nonumber\\
&\leq C r^{-1} \int_{-3r}^{3r} |u(x)|^{2} \, dx.
\end{align}
Now, we can see that for any $K>4$ it holds
$$
X_{r}^{3}=(\R\setminus (-2r, 2r))\times (-2r, 2r) \subset ( (-Kr, Kr) \times (-2r, 2r)) \cup ((\R\setminus (-Kr, Kr))\times (-2r, 2r)).
$$
Therefore
\begin{align}\label{Pa6SA}
&\int_{-Kr}^{Kr} \, dx \int_{\{y\in (-2r, 2r): |y-x|> r\}}  |u(x)|^{2} \frac{|\phi_{r}(x)-\phi_{r}(y)|^{2}}{|x-y|^{2}} \, dy \nonumber\\
&\quad \leq C \int_{-Kr}^{Kr} \, dx \int_{\{y\in (-2r, 2r): |y-x|> r\}} \frac{|u(x)|^{2}}{|x-y|^{2}} \, dy \nonumber \\
&\quad \leq C \int_{-Kr}^{Kr} |u(x)|^{2} \, dx \int_{\{z\in \R: |z|> r\}} \frac{1}{|z|^{2}} \, dz \nonumber\\
&\quad = C r^{-1} \int_{-Kr}^{Kr} |u(x)|^{2} \, dx.
\end{align}
Let us note that if $(x, y)\in (\R\setminus (-Kr, Kr))\times (-r, r)$, then 
$$
|x-y|\geq |x|- |y|\geq \frac{|x|}{2}+ \frac{K}{2}r -2r >\frac{|x|}{2}.
$$ 
Fix $\gamma>1$. Applying the H\"older inequality with exponents $\gamma$ and $\gamma'$ we obtain
\begin{align}\label{Pa7SA}
&\int_{\R\setminus (-Kr, Kr)} \, dx \int_{\{y\in (-2r, 2r): |y-x|>r\}} |u(x)|^{2} \frac{|\phi_{r}(x)-\phi_{r}(y)|^{2}}{|x-y|^{2}} \, dy \nonumber\\
&\leq  C \int_{\R\setminus (-Kr, Kr)} \, dx \int_{\{y\in (-2r, 2r): |y-x|>r \}} \frac{|u(x)|^{2}}{|x-y|^{2}} \, dy \nonumber\\
&\leq C r \int_{\R\setminus (-Kr, Kr)} \frac{|u(x)|^{2}}{|x|^{2}} \, dx \nonumber\\
&\leq C r \left(\int_{\R\setminus (-Kr, Kr)} |u(x)|^{2\gamma'} \, dx\right)^{\frac{1}{\gamma'}} \left(\int_{\R\setminus (-Kr, Kr)} |x|^{-2\gamma} \, dx\right)^{\frac{1}{\gamma}} \nonumber\\
&\leq C K^{1-2\gamma} r^{2-2\gamma}\left(\int_{\R\setminus (-Kr, Kr)} |u(x)|^{2\gamma'} \, dx\right)^{\frac{1}{\gamma'}}.
\end{align}
Taking into account (\ref{Pa6SA}), (\ref{Pa7SA}) and $H^{1/2}(\R, \R)\subset L^{2\gamma'}(\R, \R)$, we deduce
\begin{align}\label{Pa8SA}
D_{r}\leq C r^{-1} \int_{-Kr}^{Kr} |u(x)|^{2} \, dx+C K^{1-2\gamma} r^{2-2\gamma}.
\end{align}
Putting together (\ref{Pa1SA})-(\ref{Pa5SA}), (\ref{Pa8SA}) and using $K>4$ and $1-2\gamma<0$ we have
\begin{align*}
\iint_{\R^{2}} |u(x)|^{2} \frac{|\phi_{r}(x)-\phi_{r}(y)|^{2}}{|x-y|^{2}} \, dx dy &\leq Cr^{-1} \int_{-Kr}^{Kr} |u(x)|^{2} \, dx+C K^{1-2\gamma} r^{2-2\gamma}\\
&\leq Cr^{-1}+CK^{1-2\gamma} r^{2-2\gamma}\\
&\leq Cr^{-1}+C4^{1-2\gamma} r^{2-2\gamma}.
\end{align*}
Since $2-2\gamma<0$, we deduce
\begin{align*}
& \limsup_{r\rightarrow \infty} \iint_{\R^{2}} |u(x)|^{2} \frac{|\phi_{r}(x)-\phi_{r}(y)|^{2}}{|x-y|^{2}} \, dx dy =0.
\end{align*}
This ends the proof of lemma.
\end{proof}

\section{Variational framework and modified problem}

\noindent
Using the change of variable $u(x)\mapsto u(\e x)$, instead of (\ref{P}), we deal with the following equivalent problem
\begin{equation}\label{Pe}
(-\Delta)^{1/2}_{A_{\e}} u + V_{\e}(x)u =  f(|u|^{2})u \quad \mbox{ in } \R,
\end{equation}
where $A_{\e}(x)=A(\e x)$ and $V_{\e}(x)=V(\e x)$. \\
Hereafter, we use the penalization argument introduced in \cite{DF} (see also \cite{AFF, DOS}) to study \eqref{Pe}.

\noindent
Fix $k>\frac{2\theta}{\theta-2}$ and $a>0$ such that $f(a)=\frac{V_{0}}{k}$, and we consider the function
$$
\hat{f}(t):=
\begin{cases}
f(t)& \text{ if $t \leq a$} \\
\frac{V_{0}}{k}    & \text{ if $t >a$}.
\end{cases}
$$ 
Let $t_{a}, T_{a}>0$ be such that $t_{a}<a<T_{a}$ and take $\xi\in C^{\infty}_{c}(\R, \R)$ such that
\begin{compactenum}[$(\xi_1)$]
\item $\xi(t)\leq \hat{f}(t)$ for all $t\in [t_{a}, T_{a}]$,
\item $\xi(t_{a})=\hat{f}(t_{a})$, $\xi(T_{a})=\hat{f}(T_{a})$, $\xi'(t_{a})=\hat{f}'(t_{a})$ and $\xi'(T_{a})=\hat{f}'(T_{a})$, 
\item the map $t\mapsto \xi(t)$ is increasing for all $t\in [t_{a}, T_{a}]$.
\end{compactenum}
Let us define $\tilde{f}\in C^{1}(\R, \R)$ as follows:
$$
\tilde{f}(t):=
\begin{cases}
\hat{f}(t)& \text{ if $t\notin [t_{a}, T_{a}]$} \\
\xi(t)    & \text{ if $t\in [t_{a}, T_{a}]$}.
\end{cases}
$$ 
We introduce the following penalized nonlinearity $g: \R\times \R\rightarrow \R$ by 
$$
g(x, t)=\chi_{\Lambda}(x)f(t)+(1-\chi_{\Lambda}(x))\tilde{f}(t),
$$
where $\chi_{\Lambda}$ is the characteristic function on $\Lambda$, and  we write $\displaystyle{G(x, t)=\int_{0}^{t} g(x, \tau)\, d\tau}$.

In the light of $(f_1)$-$(f_4)$, it follows that $g$ verifies the following properties:
\begin{compactenum}[($g_1$)]
\item $\displaystyle{\lim_{t\rightarrow 0} g(x, t)=0}$ uniformly in $x\in \R$;
\item fixed $q\geq 2$, $\delta>0$ and $\beta>1$ there exists $C_{\delta}>0$ such that
\begin{align*}
g(x, t)\leq \delta+C_{\delta} t^{\frac{q-2}{2}} \left(e^{\beta \omega t}-1 \right), \quad \mbox{ for any } t\geq 0,
\end{align*}
and
\begin{align*}
G(x, t)\leq \delta t+C_{\delta} t^{\frac{q}{2}}(e^{\beta \omega t}-1), \quad \mbox{ for any } t\geq 0,
\end{align*}
\item $(i)$ $0< \frac{\theta}{2} G(x, t)\leq g(x, t)t$ for any $x\in \Lambda$ and $t>0$, \\
$(ii)$ $0\leq  G(x, t)\leq g(x, t)t\leq \frac{V(x)}{k}t$ and $0\leq g(x,t)\leq \frac{V(x)}{k}$ for any $x\in \Lambda^{c}$ and $t>0$;
\item $t\mapsto g(x,t)$ is increasing for $t>0$.
\end{compactenum}

From now on, we focus our study on the following modified problem 
\begin{equation}\label{MPe}
(-\Delta)^{1/2}_{A_{\e}} u + V_{\e}(x)u =  g_{\e}(x, |u|^{2})u \quad \mbox{ in } \R, 
\end{equation}
where $g_{\e}(x, t)=g(\e x, t)$. 
Let us note that if $u$ is a solution of (\ref{MPe}) such that 
\begin{equation}\label{ue}
|u(x)|\leq t_{a} \quad \mbox{ for all } x\in  \R\setminus\Lambda_{\e},
\end{equation}
where $\Lambda_{\e}:=\{x\in \R: \e x\in \Lambda\}$, then $u$ is also a solution of the original problem  (\ref{Pe}).

In order to study weak solutions to (\ref{MPe}), we seek critical points of the Euler-Lagrange functional
$$
J_{\e}(u)=\frac{1}{2}\|u\|^{2}_{\e}-\frac{1}{2}\int_{\R} G_{\e}(x, |u|^{2})\, dx
$$
which is well-defined for any function $u$ belonging to the space
$$
\h=\overline{C^{\infty}_{c}(\R, \C)}^{\|\cdot\|_{\e}}
$$
endowed with the norm 
$$
\|u\|^{2}_{\e}=\frac{1}{2\pi}[u]^{2}_{A_{\e}}+\|\sqrt{V_{\e}} \, |u|\|^{2}_{L^{2}(\R)}.
$$
Using $(V_1)$, it is easy to see that the embedding $\h\subset L^{q}(\R, \C)$ is continuous for all $q\in [2, \infty)$ and  $\h\subset L^{q}(K, \C)$ for all $q\in [1, \infty)$ and $K\subset \R$ compact.

As we will see later, it will be fundamental to consider the following family of autonomous problem associated with \eqref{Pe}, that is, for $\mu\in \R_{+}$
\begin{equation}\label{AP0}
(-\Delta)^{1/2} u + \mu u =  f(u^{2})u \quad \mbox{ in } \R.
\end{equation}
We denote by $J_{\mu}: H^{1/2}_{\mu}(\R, \R)\rightarrow \R$ the corresponding energy functional
\begin{align*}
J_{\mu}(u)=\frac{1}{2}\|u\|^{2}_{\mu}-\frac{1}{2}\int_{\R} F(u^{2})\, dx, 
\end{align*}
where $H^{1/2}_{\mu}(\R, \R)$ is the space $H^{1/2}(\R, \R)$ equipped with the norm 
$$
\|u\|^{2}_{\mu}=\frac{1}{2\pi}[u]^{2}+\mu \|u\|^{2}_{L^{2}(\R)}.
$$

Firstly, we can note that $J_{\e}$ possesses a mountain pass structure \cite{AR}.
\begin{lem}\label{MPG}
\begin{compactenum}[$(i)$]
\item $J_{\e}(0)=0$;
\item there exist $\sigma, \rho>0$ such that $J_{\e}(u)\geq \sigma$ for any $u\in \h$ such that $\|u\|_{\e}=\rho$;
\item there exists $e\in \h$ with $\|e\|_{\e}>\rho$ such that $J_{\e}(e)<0$.
\end{compactenum}
\end{lem}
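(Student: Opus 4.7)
The plan is to verify the three items in order, with (ii) being the only delicate part.

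Item (i) is immediate: since $G_{\e}(x,0)=0$ for every $x\in\R$, the definition of $J_{\e}$ gives $J_{\e}(0)=0$.

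For (ii), I would combine $(g_2)$ with the fractional Moser--Trudinger machinery already developed in Lemmas \ref{lem2.1ADOM}--\ref{lem2.2ADOM}. Fix $q>2$, $\delta>0$ small, and $\beta>1$ close to $1$. By $(g_2)$,
\begin{align*}
\int_{\R} G_{\e}(x,|u|^{2})\,dx \leq \delta\int_{\R}|u|^{2}\,dx + C_{\delta}\int_{\R}|u|^{q}\bigl(e^{\beta\omega|u|^{2}}-1\bigr)\,dx.
\end{align*}
The first integral is $\leq \delta\|u\|_{\e}^{2}/V_{0}$ by $(V_{1})$. For the second, Hölder's inequality with exponents $t,t'$ (with $t>1$ close to $1$) yields
\begin{align*}
\int_{\R}|u|^{q}\bigl(e^{\beta\omega|u|^{2}}-1\bigr)\,dx \leq \bigl\||u|^{q}\bigr\|_{L^{t'}(\R)}\,\bigl\|e^{\beta\omega|u|^{2}}-1\bigr\|_{L^{t}(\R)}.
\end{align*}
The first factor is controlled by $C\|u\|_{\e}^{q}$ using Theorem \ref{Sembedding} since $\h\hookrightarrow L^{qt'}(\R,\C)$. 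For the second factor I would impose $\|u\|_{\e}\leq\rho$ with $\rho$ so small that the induced bound on $\|u\|_{A_{\e}}$ (via $\|u\|_{A_{\e}}^{2}\leq(2\pi+V_{0}^{-1})\|u\|_{\e}^{2}$) satisfies the hypothesis \eqref{2.2ADOM} of Lemma \ref{lem2.2ADOM} with room to spare, so that the exponential integral stays bounded by a constant. Putting these estimates together,
\begin{align*}
J_{\e}(u) \geq \tfrac{1}{2}\Bigl(1-\tfrac{\delta}{V_{0}}\Bigr)\|u\|_{\e}^{2} - C\|u\|_{\e}^{q},
\end{align*}
and since $q>2$, choosing $\delta$ small and then $\rho$ small produces $\sigma>0$ with $J_{\e}(u)\geq\sigma$ on $\|u\|_{\e}=\rho$.

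For (iii), I would use the AR-type condition $(g_{3})(i)$, which by a standard ODE integration yields $G_{\e}(x,s)\geq C_{1}s^{\theta/2}-C_{2}$ for all $s\geq 0$ and $x\in\Lambda_{\e}$. Since $M\subset\Lambda$ is nonempty and $\Lambda$ is open, I can fix a nontrivial nonnegative $u_{0}\in C_{c}^{\infty}(\R,\R)$ with $\supp u_{0}\subset\Lambda_{\e}$ (possible for each fixed $\e$ by choosing a bump inside some interval contained in $\Lambda$ and rescaling); viewed as a complex-valued function, $u_{0}\in\h$. Then
\begin{align*}
J_{\e}(tu_{0}) \leq \tfrac{t^{2}}{2}\|u_{0}\|_{\e}^{2} - \tfrac{C_{1}}{2}t^{\theta}\!\int_{\R}u_{0}^{\theta}\,dx + \tfrac{C_{2}}{2}|\!\supp u_{0}|,
\end{align*}
which tends to $-\infty$ as $t\to\infty$ because $\theta>2$. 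Choosing $t>0$ large enough so that in addition $\|tu_{0}\|_{\e}>\rho$, the function $e:=tu_{0}$ does the job.

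The main obstacle is the delicate balancing in (ii): one must simultaneously pick $q>2$, $\beta>1$, $t>1$ close to $1$, $\delta$ small, and $\rho$ small enough that the product $\beta\omega\cdot(2\pi+V_{0}^{-1})\rho^{2}$ falls strictly below the threshold required by Lemma \ref{lem2.2ADOM}. The remaining items are structural and follow routinely from the assumptions on $g$.
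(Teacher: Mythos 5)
Your proposal is correct and follows essentially the same route as the paper: in (ii) both arguments split $G_{\e}$ via the growth condition into a quadratic term absorbed by $\|u\|_{\e}^{2}$ and an exponential term handled by H\"older together with the uniform Moser--Trudinger bound of Lemmas \ref{lem2.1ADOM}--\ref{lem2.2ADOM} for $\|u\|_{\e}=\rho$ small, giving $J_{\e}(u)\geq \tfrac14\rho^{2}-C\rho^{q}$ with $q>2$; in (iii) both integrate the Ambrosetti--Rabinowitz condition $(g_3)(i)$ on a function supported in $\Lambda_{\e}$ to get $J_{\e}(tu)\to-\infty$ as $t\to\infty$. The only differences are cosmetic (you apply H\"older after $(g_2)$ and treat the $L^{t}$ norm of the exponential separately, while the paper builds the $(e^{r\alpha t^{2}}-1)^{1/r}$ factor into the growth estimate), so the verification stands.
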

\begin{proof}
Fix $u\in \h$ such that $\|u\|_{\e}= \rho<1$ and take $\alpha \in (\omega, \frac{\omega}{\rho^{2}})$. From the growth assumptions on $g$, there exists $r>1$ close to $1$ such that $r\alpha< \frac{\omega}{\rho^{2}}$, $q>2$ and $C>0$ with 
\begin{align*}
G_{\e}(x, t^{2}) \leq \frac{V_{0}}{4} t^{2} + C \left(e^{r\alpha t^{2}} -1\right)^{\frac{1}{r}} t^{q} \quad \mbox{ for all } t\geq 0.  	
\end{align*}
Therefore, applying H\"older inequality and using \eqref{2.9DOMS} we get
\begin{align*}
J_{\e}(u)&\geq \frac{1}{2}\|u\|^{2}_{\e}-\frac{V_{0}}{4} \|u\|^{2}_{L^{2}(\R)}-C \int_{\R} \left(e^{r\alpha |u|^{2}} -1 \right)^{\frac{1}{r}} |u|^{q} dx \\
&\geq \frac{1}{4}\|u\|^{2}_{\e}- C \left( \int_{\R} (e^{r\alpha |u|^{2}} -1)\, dx \right)^{\frac{1}{r} }\left( \int_{\R} |u|^{r' q} \, dx \right)^{\frac{1}{r'}}\\
&\geq \frac{1}{4}\|u\|^{2}_{\e}- C \|u\|_{\e}^{q}= \frac{1}{4}\rho^{2} - C\rho^{q}= \sigma>0, 
\end{align*}
for every $\rho$ sufficiently small. 

Regarding $(iii)$, we can note that in view of $(g_3)$, we have for any $u\in \h\setminus\{0\}$ with $\supp(u)\subset \Lambda_{\e}$ and $t>0$
\begin{align*}
J_{\e}(tu)&\leq \frac{t^{2}}{2} \|u\|^{2}_{\e}-\frac{1}{2}\int_{\Lambda_{\e}} G_{\e}(x, t^{2}|u|^{2})\, dx \\
&\leq \frac{t^{2}}{2} \|u\|^{2}_{\e}-Ct^{\theta} \int_{\Lambda_{\e}} |u|^{\theta}\, dx+C
\end{align*}
which together with $\theta>2$ implies that $J_{\e}(tu)\rightarrow -\infty$ as $t\rightarrow \infty$.
\end{proof}

Taking into account Lemma \ref{MPG}, we can define the mountain pass level
$$
c_{\e}=\inf_{\gamma\in \Gamma_{\e}} \max_{t\in [0, 1]} J_{\e}(\gamma(t)), 
$$
where
$$
\Gamma_{\e}=\{\gamma\in C([0, 1], \h): \gamma(0)=0 \mbox{ and } J_{\e}(\gamma(1))<0\}.
$$
Let us introduce the Nehari manifold associated with (\ref{MPe}), that is
\begin{equation*}
\mathcal{N}_{\e}:= \{u\in \h \setminus \{0\} : \langle J_{\e}'(u), u \rangle =0\}.
\end{equation*}
We note that for all $u\in \N_{\e}$, from the growth assumptions on $g$, it follows that we can find $r^{*}>0$ (independent of $u$) such that
\begin{align}\label{uNr}
\|u\|_{\e}\geq r^{*}>0.
\end{align}
We denote by $c_{\mu}$ and $\mathcal{N}_{\mu}$ the mountain pass level and the Nehari manifold associated with \eqref{AP0}, respectively.
It is easy to verify (see \cite{W}) that $c_{\e}$ can be characterized as follows:
$$
c_{\e}=\inf_{u\in \h\setminus\{0\}} \sup_{t\geq 0} J_{\e}(t u)=\inf_{u\in \N_{\e}} J_{\e}(u).
$$

\noindent
In view of Theorem $1.1$ in \cite{dSA}, we can state the following compactness result for the autonomous problem \eqref{AP0}:
\begin{lem}\label{FS}
Let $(u_{n})\subset \mathcal{N}_{\mu}$ be a sequence satisfying $J_{\mu}(u_{n})\rightarrow c_{\mu}$. Then we have either
\begin{compactenum}[(i)]
\item $(u_{n})$ strongly converges in $H^{1/2}_{\mu}(\R, \R)$, or
\item there exists a sequence $(\tilde{y}_{n})\subset \R$ such that,  up to a subsequence, $v_{n}(x)=u_{n}(x+\tilde{y}_{n})$ converges strongly in $H^{1/2}_{\mu}(\R, \R)$.
\end{compactenum}
In particular, there exists a nonnegative minimizer $w$ of $J_{\mu}$ in $\mathcal{N}_{\mu}$.
\end{lem}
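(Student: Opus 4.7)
The plan is the classical concentration-compactness argument on the Nehari manifold, tailored to the exponential-critical setting by means of the technical lemmas of Section~2. Boundedness of $(u_n)$ in $H^{1/2}_\mu(\R,\R)$ follows at once from $\langle J_\mu'(u_n),u_n\rangle = 0$ and the Ambrosetti--Rabinowitz condition $(f_3)$:
\begin{equation*}
c_\mu + o_n(1) = J_\mu(u_n) - \tfrac{1}{\theta}\langle J_\mu'(u_n), u_n\rangle \;\ge\; \Bigl(\tfrac{1}{2} - \tfrac{1}{\theta}\Bigr)\|u_n\|_\mu^2,
\end{equation*}
while the growth estimate on $f$ applied to the Nehari identity yields a uniform lower bound $\|u_n\|_\mu \ge r^\ast > 0$ of the type \eqref{uNr}.

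The crucial preliminary is to bring the sequence below the Moser--Trudinger threshold, i.e.\ to verify \eqref{2.2ADOM}. This is exactly the role of assumption $(f_5)$: since $F(t) \ge \frac{2C_p}{p}t^{p/2}$, one has $J_\mu(u) \le \tfrac{1}{2}\|u\|_\mu^2 - \tfrac{C_p}{p}\int|u|^p\,dx$, and a standard scaling argument evaluates the mountain-pass level of the right-hand side at $[C_p \min\{1,V_0\}]^{-2/(p-2)}\beta_p$. The explicit threshold on $C_p$ in $(f_5)$ is chosen precisely so that $\frac{2\theta}{\theta-2}c_\mu$ lies strictly below the Moser--Trudinger ceiling, and substituting into the bound of the previous paragraph delivers \eqref{2.2ADOM}. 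Now Lions' alternative (Lemma~\ref{lions lemma}) applied to $|u_n|$ splits two cases: vanishing produces via Lemma~\ref{vanishingF} the contradiction $\|u_n\|_\mu \to 0$ against $\|u_n\|_\mu \ge r^\ast$; non-vanishing supplies $\tilde y_n \in \R$ and $\delta>0$ with $\int_{\tilde y_n - R}^{\tilde y_n + R} |u_n|^2\,dx \ge \delta$. Setting $v_n(\cdot) = u_n(\cdot + \tilde y_n)$ and using translation invariance of $J_\mu$ gives $v_n \in \mathcal{N}_\mu$ with $J_\mu(v_n)\to c_\mu$ and, up to a subsequence, $v_n \rightharpoonup v \ne 0$; if $(\tilde y_n)$ is bounded this already delivers alternative (i), otherwise alternative (ii).

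To upgrade weak to strong convergence I test $J_\mu'(v_n) = 0$ against $\varphi \in C_c^\infty(\R,\R)$ and pass to the limit via Lemma~\ref{cor2.3ADOM}, obtaining $J_\mu'(v) = 0$ and hence $v \in \mathcal{N}_\mu$ with $J_\mu(v) \ge c_\mu$. Conversely, the identity
\begin{equation*}
J_\mu(u) = \Bigl(\tfrac{1}{2}-\tfrac{1}{\theta}\Bigr)\|u\|_\mu^2 + \tfrac{1}{\theta}\int_\R \Bigl[f(u^2)u^2 - \tfrac{\theta}{2}F(u^2)\Bigr]dx \qquad (u \in \mathcal{N}_\mu),
\end{equation*}
whose integrand is nonnegative by $(f_3)$, combined with Fatou's lemma and weak lower semicontinuity of $\|\cdot\|_\mu$, gives $J_\mu(v) \le \liminf_n J_\mu(v_n) = c_\mu$. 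Therefore $J_\mu(v) = c_\mu$ and $\|v_n\|_\mu \to \|v\|_\mu$, which in the Hilbert space $H^{1/2}_\mu(\R,\R)$ forces $v_n \to v$ strongly. Finally, for the nonnegative minimizer: the diamagnetic inequality specialized to $A=0$ gives $[|v|] \le [v]$ and $F(v^2)=F(|v|^2)$, so $J_\mu(|v|) \le J_\mu(v)$; the Nehari projection of $|v|$, well defined thanks to $(f_4)$, produces the desired $w \ge 0$. The step I anticipate as most delicate is the quantitative comparison in the second paragraph: the constants in the scaling of $\beta_p$ and the Moser--Trudinger ceiling must be tracked with care to verify that $(f_5)$ really does push $c_\mu$ strictly below the critical threshold imposed by the exponential growth.
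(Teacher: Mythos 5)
Your outline follows the standard route (boundedness via $(f_3)$, the level estimate from $(f_5)$ to get below the Moser--Trudinger threshold \eqref{2.2ADOM}, Lions' dichotomy, local convergence as in Lemma \ref{cor2.3ADOM}, the Fatou squeeze, and projection of $|v|$ for the nonnegative minimizer), and it is worth saying that the paper itself gives no proof at all here: the lemma is simply quoted from Theorem 1.1 of \cite{dSA}. However, your argument has one genuine gap. In the step ``I test $J_{\mu}'(v_n)=0$'' you treat the sequence as a sequence of critical points (or at least a Palais--Smale sequence), but the hypothesis only provides $v_n\in\mathcal{N}_{\mu}$, i.e.\ $\langle J_{\mu}'(v_n),v_n\rangle=0$, together with $J_{\mu}(v_n)\to c_{\mu}$; nothing gives $J_{\mu}'(v_n)\to 0$. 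This is not cosmetic: in the paper the lemma is applied (in Lemma \ref{prop3.3}) to $\tilde v_n=t_n v_n\in\mathcal{N}_{V_0}$, which is a minimizing sequence on the Nehari manifold but not a priori a Palais--Smale sequence. Without $J_{\mu}'(v_n)\to 0$ you cannot conclude $J_{\mu}'(v)=0$, hence you do not know $v\in\mathcal{N}_{\mu}$ --- and both halves of your squeeze use exactly that membership: $J_{\mu}(v)\ge c_{\mu}$ needs $v\in\mathcal{N}_{\mu}$, and the Fatou estimate $J_{\mu}(v)\le c_{\mu}$ uses the identity $J_{\mu}(v)=\left(\tfrac12-\tfrac1\theta\right)\|v\|_{\mu}^{2}+\tfrac1\theta\int_{\R}\left[f(v^{2})v^{2}-\tfrac\theta2 F(v^{2})\right]dx$, again valid only on $\mathcal{N}_{\mu}$. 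Note also that the obvious substitute --- projecting the weak limit, $t_v v\in\mathcal{N}_{\mu}$, and writing $J_{\mu}(t_v v)\le\liminf_n J_{\mu}(t_v v_n)\le\liminf_n J_{\mu}(v_n)$ --- does not come for free either, since Fatou controls $\int F(t_v^{2}v^{2})$ from below, not from above, so global compactness of the $F$-term must be argued separately.

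The gap can be closed, but it requires an ingredient you never invoke: for instance, apply Ekeland's variational principle to $J_{\mu}$ on $\mathcal{N}_{\mu}$ (a complete metric space thanks to the uniform bound of type \eqref{uNr}) to replace $(v_n)$ by a nearby minimizing sequence $(\hat v_n)$ with $\|J_{\mu}'(\hat v_n)|_{\mathcal{N}_{\mu}}\|_{*}\to 0$, then use that $\mathcal{N}_{\mu}$ is a natural constraint --- the Lagrange multiplier computation based on $(f_4)$, $(f_6)$ exactly as in Proposition \ref{propPSc} --- to upgrade this to $J_{\mu}'(\hat v_n)\to 0$, run your limiting argument for $(\hat v_n)$, and transfer strong convergence back to $(v_n)$ using $\|\hat v_n-v_n\|_{\mu}\to 0$ (alternatively, use the Szulkin--Weth reduction to the unit sphere). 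With that repair the rest of your scheme is in order, up to a minor bookkeeping point in the level estimate: the scaling computation from $(f_5)$ gives $c_{V_0}\le C_p^{-2/(p-2)}\beta_p<\min\{1,V_0\}\left(\tfrac12-\tfrac1\theta\right)$, not the constant $\left[C_p\min\{1,V_0\}\right]^{-2/(p-2)}\beta_p$ you wrote, and for general $\mu$ one should either restrict to the values of $\mu$ actually used ($\mu=V_0$) or argue as in Remark \ref{Rodica}.
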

\begin{remark}\label{Rodica}
From $(f_5)$ and Lemma $3.2$ in \cite{ADOM}, it follows that $c_{\mu}$ verifies 
$$
c_{\mu}<\min\{1, \mu\} \left(\frac{1}{2}-\frac{1}{\theta} \right).
$$
\end{remark}

\noindent
Next, we show an interesting relation between $c_{\e}$ and $c_{V_{0}}$.
\begin{lem}\label{AMlem1}
The numbers $c_{\e}$ and $c_{V_{0}}$ satisfy the following inequality
$$
\limsup_{\e\rightarrow 0} c_{\e}\leq c_{V_{0}}<\min\{1, V_{0}\} \left(\frac{1}{2}-\frac{1}{\theta} \right).
$$
\end{lem}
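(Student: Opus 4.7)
The strategy is as follows. The second inequality $c_{V_0} < \min\{1, V_0\}\left(\frac{1}{2} - \frac{1}{\theta}\right)$ is immediate from Remark \ref{Rodica} applied with $\mu = V_0$, so the real task is to prove $\limsup_{\e \to 0} c_\e \leq c_{V_0}$. By Lemma \ref{FS}, the autonomous problem with $\mu = V_0$ admits a nonnegative ground state $w \in \mathcal{N}_{V_0}$ with $J_{V_0}(w) = c_{V_0}$. I would pick $x_0 \in M$ (so that $V(x_0) = V_0$), let $\phi_r(x) = \phi(x/r)$ be the cutoff from Lemma \ref{lemPSI}, set $w_r := \phi_r w$ (so that $w_r \to w$ in $H^{1/2}(\R,\R)$ as $r \to \infty$ by the same Lemma), and define the test functions
$$
\Psi_{r,\e}(x) := e^{\imath A(x_0) x}\, w_r(x - x_0/\e),
$$
which have compact support and a direct computation places them in $\h$. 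Since $x_0 \in M \subset \Lambda$ and $\Lambda$ is open, for every fixed $r>0$ one has $\mathrm{supp}(\Psi_{r,\e}) \subset \Lambda_\e$ as soon as $\e$ is small enough, so on this support the penalization reduces to the original nonlinearity. The mountain pass characterization then gives $c_\e \leq \max_{t \geq 0} J_\e(t\Psi_{r,\e})$, and the plan is to let first $\e \to 0$ (for fixed $r$) and then $r \to \infty$.

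For the inner limit, the change of variable $y = x - x_0/\e$ decouples the nonlinear part and gives $\int_\R G_\e(x, |\Psi_{r,\e}|^2)\,dx = \int_\R F(w_r^2)\,dy$, while the potential part becomes $\int_\R V(x_0 + \e y)\, w_r(y)^2\,dy \to V_0 \|w_r\|_{L^2(\R)}^2$ by continuity of $V$ on $\mathrm{supp}(w_r)$ together with dominated convergence. The delicate piece is the fractional magnetic seminorm, which after the same change of variables reads
$$
[\Psi_{r,\e}]_{A_\e}^2 = \frac{1}{2\pi} \iint_{\R^2} \frac{|w_r(y) - e^{\imath \psi_\e(y,y')}\, w_r(y')|^2}{|y-y'|^2}\,dy\,dy',
$$
with $\psi_\e(y,y') := (A(x_0 + \e(y+y')/2) - A(x_0))(y-y')$. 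Subtracting the plain Gagliardo integrand of $w_r$ leaves the extra term $2 w_r(y) w_r(y')(1 - \cos \psi_\e)/|y-y'|^2$, which I would split along $\{|y-y'|\leq 1\}$ and $\{|y-y'| > 1\}$: on the first set the bound $1 - \cos \psi_\e \leq \psi_\e^2/2$ combined with the Hölder estimate $|A(x_0 + \e(y+y')/2) - A(x_0)| \leq C(\e|y+y'|/2)^\alpha$ and the compactness of $\mathrm{supp}(w_r)$ give an $O(\e^{2\alpha})$ contribution; on the second set $1-\cos\psi_\e \leq 2$ and Young's inequality $2 w_r(y) w_r(y') \leq w_r(y)^2 + w_r(y')^2$ produce the integrable pointwise bound $(w_r(y)^2 + w_r(y')^2)/|y-y'|^2$, allowing the use of dominated convergence. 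One thus concludes $\|\Psi_{r,\e}\|_\e^2 \to \frac{1}{2\pi}[w_r]^2 + V_0 \|w_r\|_{L^2(\R)}^2 =: \|w_r\|_{V_0}^2$.

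With these convergences in hand, I would let $t_{r,\e}>0$ be the unique maximizer of $t \mapsto J_\e(t \Psi_{r,\e})$ (uniqueness and uniform boundedness in $\e$ follow from $(f_3)$ and $(f_4)$), show that $t_{r,\e} \to t_r$, the unique $t>0$ with $t_r w_r \in \mathcal{N}_{V_0}$, and deduce $\max_{t \geq 0} J_\e(t\Psi_{r,\e}) \to \max_{t \geq 0} J_{V_0}(t w_r)$. Sending finally $r \to \infty$, the convergence $w_r \to w$ in $H^{1/2}(\R,\R)$ furnished by Lemma \ref{lemPSI} and the continuity of the Nehari projection yield $\max_{t \geq 0} J_{V_0}(t w_r) \to J_{V_0}(w) = c_{V_0}$, which closes the proof. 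The main obstacle is precisely the magnetic seminorm convergence: absorbing the phase $e^{\imath \psi_\e}$ into the plain Gagliardo seminorm requires both the Hölder regularity of $A$ and the splitting above, since the naive pointwise bound $2 w_r(y)w_r(y')/|y-y'|^2$ fails to be integrable over all of $\R^2$.
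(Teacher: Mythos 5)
Your argument is correct, but it follows a genuinely different route from the paper at the key technical point. The paper truncates at scale $1/\e$ (it tests with $\eta(\e x)w(x)e^{\imath A(0)x}$), so the magnetic phase error must be controlled over regions of diameter $\e^{-\beta}$; this forces it to first establish the polynomial decay \eqref{remdecay} of the ground state $w$ (via a Moser iteration, Silvestre's regularity theory and a comparison with the fundamental solution of $(-\Delta)^{1/2}+\frac{V_{0}}{2}$), and then to split the phase term into $X_{\e}^{1}, X_{\e}^{2}$ according to $|x-y|\gtrless\e^{-\beta}$, using the H\"older continuity of $A$ together with the decay to make $\e^{2\alpha}(X^{2,1}_{\e}+X^{2,2}_{\e})$ vanish. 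You instead keep the cutoff at a fixed scale $r$, recenter at $x_{0}/\e$ with $x_{0}\in M$, and take the double limit $\e\to 0$ then $r\to\infty$; since the support is a fixed compact set, the exact identity $|a-be^{\imath\theta}|^{2}=(a-b)^{2}+2ab(1-\cos\theta)$ plus the H\"older (in fact mere uniform) continuity of $A$ near $x_{0}$ and your near/far diagonal splitting suffice, and no decay or regularity of $w$ is needed in this lemma; Lemma \ref{lemPSI} then handles $r\to\infty$, and Remark \ref{Rodica} gives the strict inequality exactly as in the paper. What your route buys is a leaner proof of this lemma (the heavy regularity/decay machinery disappears, and your cosine identity is cleaner than the paper's $X_{\e}$/$Y_{\e}$ decomposition with a Cauchy--Schwarz cross term); what the paper's route buys is that the $L^{\infty}$ bound and the decay \eqref{remdecay} proved along the way are reused later (e.g.\ in Lemma \ref{lem3.4}, which again uses truncations at scale $1/\e$), and it needs only one limiting parameter. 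Two small points you should make explicit if you write this up: the passage $\max_{t\geq0}J_{\e}(t\Psi_{r,\e})\to\max_{t\geq0}J_{V_{0}}(tw_{r})$ and the outer limit require a dominating function for $F(t^{2}w_{r}^{2})$, resp.\ $F(t_{r}^{2}w_{r}^{2})$ with $t$ bounded, i.e.\ the standard fact that $\int_{\R}(e^{\alpha u^{2}}-1)\,dx<\infty$ for a fixed $u\in H^{1/2}(\R,\R)$ and every $\alpha>0$ (the paper avoids this by having $w\in L^{\infty}$ with quadratic decay); and the lower bound argument showing $t_{r,\e}$ stays bounded should use a set of positive measure where $w_{r}\geq\alpha>0$ together with $(f_{3})$--$(f_{4})$, since without the regularity step you cannot invoke continuity and positivity of $w$ pointwise.
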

\begin{proof}
Arguing as in Theorem $1.1$ in \cite{dSA}, we can find a positive ground state $w\in H^{1/2}_{V_{0}}(\R, \R)$ to autonomous problem \eqref{AP0} with $\mu=V_{0}$, so that $J'_{V_{0}}(w)=0$ and $J_{V_{0}}(w)=c_{V_{0}}$. 

Using a suitable Moser iteration argument (see proof of Lemma \ref{moser}), we can prove that $w\in L^{\infty}(\R, \R)$. This implies that $f(|w|^{2})w\in L^{\infty}(\R, \R)$. 
Applying Proposition $2.9$ in \cite{Silvestre}, we can deduce that $w\in C^{0, \alpha}(\R, \R)$.
Since $f\in C^{1}$ and $w\in L^{\infty}(\R, \R)$, we obtain that $f(|w|^{2})w\in C^{0, \alpha}(\R, \R)$, and by Proposition $2.8$ in \cite{Silvestre}, we can deduce that $w\in C^{1, \alpha}(\R, \R)$. This, together with $w\in L^{2}(\R, \R)$ implies that $w(x)\rightarrow 0$ as $|x|\rightarrow \infty$. 
In what follows, we prove the following decay estimate for $w$:
\begin{equation}\label{remdecay}
0<w(x)\leq \frac{C}{|x|^{2}} \quad \mbox{ for all } |x|>1.
\end{equation}
In the light of $w(x)\rightarrow 0$ as $|x|\rightarrow \infty$ and $(f_{1})$, we can see that there exists $R>0$ such that 
\begin{align}\label{caserta1}
(-\Delta)^{1/2} w + \frac{V_{0}}{2} w \leq 0 \quad \mbox{ in } \R \setminus (-R, R). 
\end{align}
Let $H$ be the fundamental solution to
\begin{align}\label{caserta2}
(-\Delta)^{1/2} H + \frac{V_{0}}{2} H =\delta \quad \mbox{ in } \R. 
\end{align}
From Lemma $C.1.$ in \cite{FLS} (see also \cite{BG, FQT}), we know that $H$ is continuous and $H>0$, so we can deduce that $H(x)\geq c$ for all $x\in [-R, R]$, for some $c>0$. 

Let $C_{0}= \|w\|_{L^{\infty}(\R)} c^{-1}$, and set $z(x)= C_{0} H(x)- w(x)$. Note that $z(x)\geq 0$ in $(-R, R)$.  Let us prove that $z(x)\geq 0$ in $\R\setminus (-R, R)$. If by contradiction $z<0$ somewhere in $\R\setminus (-R, R)$, using the fact that $z(x)\ri 0$ as $|x|\ri \infty$ and $z\geq 0$ in $(-R, R)$, we can see that $z$ attains a strict global minimum at some point $x_{0}\in \R\setminus (-R, R)$ with $z(x_{0})<0$. This, together with the singular integral expression for $(-\Delta)^{1/2}$ yield 
\begin{align*}
(-\Delta)^{1/2} z(x_{0}) + \frac{V_{0}}{2} z(x_{0})<0, 
\end{align*}
and this is a contradiction because \eqref{caserta1} and \eqref{caserta2} give $(-\Delta)^{1/2} z(x_{0}) + \frac{V_{0}}{2} z(x_{0})\geq 0$ in $\R\setminus (-R, R)$. Therefore $z\geq 0$ in $\R$. Recalling that (see \cite{BG, FQT, FLS}) $H(x)\leq \frac{C}{x^{2}}$ for $|x|>1$ big enough, we can conclude that \eqref{remdecay} holds true.

Now, take a cut-off function $\eta\in C^{\infty}_{c}(\R, [0,1])$ such that $\eta=1$ in a neighborhood of zero $(-\frac{\delta}{2}, \frac{\delta}{2})$ and $\supp(\eta)\subset (-\delta, \delta)\subset \Lambda$ for some $\delta>0$. 

Let us define $w_{\e}(x):=\eta_{\e}(x)w(x) e^{\imath A(0)\cdot x}$, with $\eta_{\e}(x)=\eta(\e x)$ for $\e>0$, and we observe that $|w_{\e}|=\eta_{\e}w$ and $w_{\e}\in \h$ in view of Lemma \ref{aux}. 

In what follows, we prove that
\begin{equation}\label{limwr}
\lim_{\e\rightarrow 0}\|w_{\e}\|^{2}_{\e}=\|w\|_{V_{0}}^{2}\in(0, \infty).
\end{equation}
Clearly, from the Dominated Convergence Theorem, we have 
\begin{align*}
\int_{\R} V_{\e}(x)|w_{\e}|^{2}dx\rightarrow \int_{\R} V_{0} |w|^{2}dx.
\end{align*}
Next, we show that
\begin{equation}\label{limwr*}
\lim_{\e\rightarrow 0}[w_{\e}]^{2}_{A_{\e}}=[w]^{2}.
\end{equation}
Let us observe that 
\begin{align*}
[w_{\e}]_{A_{\e}}^{2}
&=\iint_{\R^{2}} \frac{|e^{\imath A(0)\cdot x}\eta_{\e}(x)w(x)-e^{\imath A_{\e}(\frac{x+y}{2})\cdot (x-y)}e^{\imath A(0)\cdot y} \eta_{\e}(y)w(y)|^{2}}{|x-y|^{2}} dx dy \nonumber \\
&=[\eta_{\e} w]^{2}
+\iint_{\R^{2}} \frac{\eta_{\e}^2(y)w^2(y) |e^{\imath [A_{\e}(\frac{x+y}{2})-A(0)]\cdot (x-y)}-1|^{2}}{|x-y|^{2}} dx dy\\
&\quad+2\Re \iint_{\R^{2}} \frac{(\eta_{\e}(x)w(x)-\eta_{\e}(y)w(y))\eta_{\e}(y)w(y)(1-e^{-\imath [A_{\e}(\frac{x+y}{2})-A(0)]\cdot (x-y)})}{|x-y|^{2}} dx dy \\
&=: [\eta_{\e} w]^{2}+X_{\e}+2Y_{\e}, 
\end{align*}
and 
$$
|Y_{\e}|\leq [\eta_{\e} w] \sqrt{X_{\e}}. 
$$ 
On the other hand, by Lemma \ref{lemPSI}, it follows that
\begin{equation}\label{PPlem}
[\eta_{\e} w]\rightarrow [w] \quad \mbox{ as } \e\rightarrow 0. 
\end{equation}
Therefore, it is enough to show that $X_{\e}\rightarrow 0$ as $\e\rightarrow 0$ to infer that \eqref{limwr*} holds.

Fix $0<\beta<\alpha/({\frac{1}{2}+\alpha})$, and we note that
\begin{equation}\label{Ye}
\begin{split}
X_{\e}
&\leq \int_{\R} w^{2}(y) dy \int_{|x-y|\geq\e^{-\beta}} \frac{|e^{\imath [A_{\e}(\frac{x+y}{2})-A(0)]\cdot (x-y)}-1|^{2}}{|x-y|^{2}} dx\\
&+\int_{\R} w^{2}(y) dy  \int_{|x-y|<\e^{-\beta}} \frac{|e^{\imath [A_{\e}(\frac{x+y}{2})-A(0)]\cdot (x-y)}-1|^{2}}{|x-y|^{2}} dx \\
&=:X^{1}_{\e}+X^{2}_{\e}.
\end{split}
\end{equation}
Since $|e^{\imath t}-1|^{2}\leq 4$ and $w\in H^{1/2}_{V_{0}}(\R, \R)$, we obtain
\begin{equation}\label{Ye1}
X_{\e}^{1}\leq C \int_{\R^{N}} w^{2}(y) dy \int_{\e^{-\beta}}^\infty \rho^{-2} d\rho\leq C \e^{2\beta s} \rightarrow 0.
\end{equation}
Taking into account $|e^{\imath t}-1|^{2}\leq t^{2}$ for all $t\in \R$, $A\in C^{0,\alpha}(\R, \R)$ for $\alpha\in(0,1]$, and $|x+y|^{2}\leq 2(|x-y|^{2}+4|y|^{2})$, we can see that
\begin{equation}\label{Ye2}
\begin{split}
X^{2}_{\e}&
	\leq \int_{\R} w^{2}(y) dy  \int_{|x-y|<\e^{-\beta}} \left|A_{\e}\Bigl(\frac{x+y}{2}\Bigr)-A(0) \right|^{2}  dx \\
	&\leq C\e^{2\alpha} \int_{\R} w^{2}(y) dy  \int_{|x-y|<\e^{-\beta}} |x+y|^{2\alpha} dx \\
	&\leq C\e^{2\alpha} \left(\int_{\R} w^{2}(y) dy  \int_{|x-y|<\e^{-\beta}} |x-y|^{2\alpha} dx+  \int_{\R} |y|^{2\alpha} w^{2}(y) dy  \int_{|x-y|<\e^{-\beta}} dx\right) \\
	&=: C\e^{2\alpha} (X^{2, 1}_{\e}+X^{2, 2}_{\e}).
	\end{split}
	\end{equation}	
Now, 
	\begin{equation}\label{Ye21}
	X^{2, 1}_{\e}
	= C  \int_{\R} w^{2}(y) dy \int_0^{\e^{-\beta}} \rho^{2\alpha} d\rho
	\leq C\e^{-\beta(1+2\alpha)}.
	\end{equation}
	On the other hand, by \eqref{remdecay}, we infer that
	\begin{equation}\label{Ye22}
	\begin{split}
	 X^{2, 2}_{\e}
	 &\leq C  \int_{\R} |y|^{2\alpha} w^{2}(y) dy \int_0^{\e^{-\beta}} d\rho  \\
	&\leq C \e^{-\beta} \left(\int_{-1}^{1}  w^{2}(y) dy + \int_{\R\setminus (-1, 1)} \frac{1}{|y|^{4-2\alpha}} dy \right)  \leq C \e^{-\beta}.
	\end{split}
	\end{equation}
Putting together \eqref{Ye}, \eqref{Ye1}, \eqref{Ye2}, \eqref{Ye21} and \eqref{Ye22} we can conclude that $X_{\e}\rightarrow 0$, and then \eqref{limwr} holds true.

Let $t_{\e}>0$ be the unique number such that 
\begin{equation*}
J_{\e}(t_{\e} w_{\e})=\max_{t\geq 0} J_{\e}(t w_{\e}).
\end{equation*}
Then $t_{\e}$ verifies 
\begin{equation}\label{AS1}
\|w_{\e}\|_{\e}^{2}=\int_{\R} g_{\e}(x, t_{\e}^{2} |w_{\e}|^{2}) |w_{\e}|^{2}dx=\int_{\R} f(t_{\e}^{2} |w_{\e}|^{2}) |w_{\e}|^{2}dx, 
\end{equation}
where we used $\supp(\eta)\subset \Lambda$ and $g=f$ on $\Lambda$.
Let us prove that $t_{\e}\rightarrow 1$ as $\e\rightarrow 0$. Since $\eta=1$ in $(-\frac{\delta}{2}, \frac{\delta}{2})$ and recalling that $w$ is a continuous positive function, we can see that $(f_4)$ yields
$$
\|w_{\e}\|_{\e}^{2}\geq f(t_{\e}^{2}\alpha^{2}_{0})\int_{-\frac{\delta}{2}}^{\frac{\delta}{2}} |w|^{2}dx, 
$$
where $\alpha_{0}=\min_{[- \frac{\delta}{2}, \frac{\delta}{2}]} w>0$. 

If $t_{\e}\rightarrow \infty$ as $\e\rightarrow 0$ then we can use $(f_3)$ and \eqref{limwr} to deduce that $\|w\|_{V_{0}}^{2}= \infty$ which gives a contradiction.
On the other hand, if $t_{\e}\rightarrow 0$ as $\e\rightarrow 0$ we can use the growth assumptions on $g$ and \eqref{limwr} to infer that $\|w\|_{V_{0}}^{2}= 0$ which is impossible.

In conclusion $t_{\e}\rightarrow t_{0}\in (0, \infty)$ as $\e\rightarrow 0$.
Hence, taking the limit as $\e\rightarrow 0$ in \eqref{AS1} and using \eqref{limwr}, we can see that 
\begin{equation}\label{AS2}
\|w\|_{V_{0}}^{2}=\int_{\R} f(t_{0}^{2} |w|^{2}) |w|^{2} dx.
\end{equation}
By $w\in \mathcal{N}_{V_{0}}$ and $(f_4)$, we deduce that $t_{0}=1$. Applying the Dominated Convergence Theorem, we obtain that $\lim_{\e\rightarrow 0} J_{\e}(t_{\e} w_{\e})=J_{V_{0}}(w)=c_{V_{0}}$.
Since $c_{\e}\leq \max_{t\geq 0} J_{\e}(t w_{\e})=J_{\e}(t_{\e} w_{\e})$, we can conclude  that
$\limsup_{\e\rightarrow 0} c_{\e}\leq c_{V_{0}}$. This together with Remark \ref{Rodica} gives the thesis.
\end{proof}

In the next lemma we show a compactness condition for $J_{\e}$.
\begin{lem}\label{PSc}
Let $c\in \R$ be such that $c<\min\{1, V_{0}\} \left(\frac{1}{2}-\frac{1}{\theta} \right)$. Then $J_{\e}$ satisfies the Palais-Smale condition at the level $c$.
\end{lem}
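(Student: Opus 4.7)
Let $(u_n)\subset\h$ be a Palais--Smale sequence at level $c$; that is, $J_\e(u_n)\to c$ and $J'_\e(u_n)\to 0$ in $(\h)^*$. My plan follows the classical three-step scheme adapted to the del Pino--Felmer penalization: uniform boundedness below the Moser--Trudinger threshold, identification of the weak limit as a critical point, and strong convergence through convergence of norms in the Hilbert space $\h$.

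\emph{Boundedness with sub-threshold size.} I compute the Ambrosetti--Rabinowitz combination $J_\e(u_n)-\frac{1}{\theta}\langle J'_\e(u_n),u_n\rangle$ and split the resulting nonlinear integral over $\Lambda_\e$ and $\Lambda_\e^c$. Property $(g_3)(i)$ produces a non-negative contribution on $\Lambda_\e$, while $(g_3)(ii)$ bounds the contribution on $\Lambda_\e^c$ from below by $-\frac{1}{2k}\int_{\Lambda_\e^c}V_\e|u_n|^2\,dx$. Since $k>\frac{2\theta}{\theta-2}$, the coefficient $\kappa_k:=(\frac{1}{2}-\frac{1}{\theta})-\frac{1}{2k}$ is strictly positive, so one gets $c+o(1)(1+\|u_n\|_\e)\geq \kappa_k\|u_n\|_\e^2$, which implies boundedness. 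Combining with the hypothesis $c<\min\{1,V_0\}(\frac{1}{2}-\frac{1}{\theta})$, with the inequality $\min\{1,V_0\}\|u_n\|_{A_\e}^2\leq\|u_n\|_\e^2$, and enlarging $k$ if necessary, I obtain $\limsup_n\|u_n\|_{A_\e}^2<1$, which is precisely the hypothesis \eqref{2.2ADOM} that activates Lemmas \ref{lem2.2ADOM} and \ref{cor2.3ADOM}.

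\emph{Weak limit is a critical point and norm convergence.} Passing to a subsequence, $u_n\rightharpoonup u$ in $\h$, $u_n\to u$ in $L^q_{loc}(\R,\C)$ for every $q\in[1,\infty)$, and a.e. For each $\phi\in C^\infty_c(\R,\C)$, weak convergence passes the sesquilinear part of $\langle J'_\e(u_n),\phi\rangle$ to the limit, and Lemma \ref{cor2.3ADOM} applied on $\supp\phi$ passes the nonlinear part, so $J'_\e(u)=0$. From (PS) and $J'_\e(u)=0$,
$$\|u_n\|_\e^2=\int_\R g_\e(x,|u_n|^2)|u_n|^2\,dx+o(1), \qquad \|u\|_\e^2=\int_\R g_\e(x,|u|^2)|u|^2\,dx,$$
and since $\h$ is Hilbert, to get $u_n\to u$ strongly it suffices to prove $\|u_n\|_\e\to\|u\|_\e$. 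Fix $R>0$ large so that $\Lambda_\e\subset(-R,R)$. On the bounded set $(-R,R)$, Lemma \ref{cor2.3ADOM} yields the convergence of the nonlinear integrals; on $\R\setminus(-R,R)\subset\Lambda_\e^c$ property $(g_3)(ii)$ gives $g_\e(x,|u_n|^2)|u_n|^2\leq \frac{1}{k}V_\e|u_n|^2$, and the analogous bound for $u$. Testing $\langle J'_\e(u_n),u_n-u\rangle\to 0$ and subtracting $\langle J'_\e(u),u_n-u\rangle=0$, Cauchy--Schwarz on the tail produces a contribution of size $\frac{1}{k}(\|u_n\|_\e+\|u\|_\e)\|u_n-u\|_\e$, which is absorbed into the left-hand side $\|u_n-u\|_\e^2$ of the resulting Hilbert-norm identity thanks to $k$ being sufficiently large; the compact part is $o(1)$ by the local convergence above. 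Hence $\|u_n-u\|_\e\to 0$, as required.

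\emph{Main obstacle.} The delicate point lies in the last step, specifically the control of the nonlinear term on the unbounded tail. The sub-threshold bound from the first step is what makes Lemma \ref{cor2.3ADOM} available on the compact part, so that $|u_n|^2\to|u|^2$ pointwise can be upgraded to convergence of the nonlinear integrals despite the exponential-critical regime. On the tail, the del Pino--Felmer penalization $(g_3)(ii)$ is precisely designed to produce the coefficient $1/k$ needed for the absorption inside the Hilbert-norm identity; without the strict bound $c<\min\{1,V_0\}(\frac{1}{2}-\frac{1}{\theta})$, neither the Moser--Trudinger controls nor the absorption step would close.
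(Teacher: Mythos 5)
Your first two steps coincide with the paper's proof: the Ambrosetti--Rabinowitz combination $J_{\e}(u_{n})-\frac{1}{\theta}\langle J_{\e}'(u_{n}),u_{n}\rangle$ with $(g_3)$ gives boundedness and, using $c<\min\{1,V_{0}\}\left(\frac{1}{2}-\frac{1}{\theta}\right)$, the sub-threshold bound \eqref{bicchiere}, and the weak limit is identified as a critical point via the argument of Lemma \ref{cor2.3ADOM}. The gap is in your last step. You test with $u_{n}-u$ and estimate the tail by Cauchy--Schwarz, obtaining a contribution of size $\frac{1}{k}(\|u_{n}\|_{\e}+\|u\|_{\e})\|u_{n}-u\|_{\e}$, which you claim can be absorbed into $\|u_{n}-u\|_{\e}^{2}$ "thanks to $k$ being sufficiently large''. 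It cannot: this term is \emph{linear} in $t_{n}:=\|u_{n}-u\|_{\e}$ with a coefficient of order $1/k$ that is a fixed constant ($k$ is chosen once and for all in the construction of the penalization and cannot be sent to infinity at this stage), so the resulting inequality $t_{n}^{2}\leq o_{n}(1)+\frac{C}{k}\,t_{n}$ only yields $\limsup_{n}t_{n}\leq C/k>0$, not $t_{n}\to 0$. What is missing is a uniform-in-$n$ control of the sequence on the tail $\R\setminus(-R,R)$: with Cauchy--Schwarz alone the factor coming from $u_{n}$ on the tail is merely bounded, and your estimate does not exclude that an amount of mass of size up to $C/k$ escapes to infinity. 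This is exactly the content of the paper's Claim 2 (inequality \eqref{T}), which is proved by testing $J_{\e}'(u_{n})$ against $\eta_{R}u_{n}$ for a cutoff $\eta_{R}$ vanishing on $(-R/2,R/2)$, absorbing the penalized term through $(g_3)$-(ii), and establishing \eqref{PS3}, i.e. a version of Lemma \ref{lemPSI} with estimates uniform in $n$; from \eqref{T} one gets global convergence of $\int_{\R}g_{\e}(x,|u_{n}|^{2})|u_{n}|^{2}\,dx$ and hence $\|u_{n}\|_{\e}\to\|u\|_{\e}$.

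Your route can, however, be repaired without the uniform tail estimate by making the tail term quadratic in $u_{n}-u$. On $\R\setminus(-R,R)\subset\Lambda_{\e}^{c}$ one has $0\leq g_{\e}(x,t)\leq V_{\e}(x)/k$, so writing $|u_{n}|\leq|u_{n}-u|+|u|$,
\begin{align*}
\Bigl|\Re\int_{|x|>R}\bigl[g_{\e}(x,|u_{n}|^{2})u_{n}-g_{\e}(x,|u|^{2})u\bigr]\overline{(u_{n}-u)}\,dx\Bigr|
\leq \frac{1}{k}\|u_{n}-u\|_{\e}^{2}+\frac{2}{k}\Bigl(\int_{|x|>R}V_{\e}|u|^{2}\,dx\Bigr)^{\frac{1}{2}}\|u_{n}-u\|_{\e},
\end{align*}
and the second term is small for $R$ large, uniformly in $n$, because it involves only the fixed limit $u$. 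Since $k>\frac{2\theta}{\theta-2}>2$, the quadratic term is absorbed by the left-hand side; letting first $n\to\infty$ (for fixed $R$) and then $R\to\infty$ gives $\|u_{n}-u\|_{\e}\to 0$. Note also that on the compact part you cannot quote \eqref{2.5ADOM} verbatim, since your "test function'' $u_{n}-u$ depends on $n$: you need the uniform exponential integrability provided by Lemma \ref{lem2.2ADOM} (available thanks to \eqref{bicchiere}) combined with H\"older's inequality and $u_{n}\to u$ in $L^{q}((-R,R),\C)$. With these corrections your argument closes and is a legitimate alternative to the paper's Claim 2; as written, the absorption step fails.
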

\begin{proof}
{\bf Claim1} Every $(PS)_{c}$ sequence is bounded in $\h$.

Let $(u_{n})\subset \h$ be a $(PS)_{c}$ sequence. Then, using $(g_3)$ we have
\begin{align*}
c+o_{n}(1)&= J_{\e}(u_{n})-\frac{1}{\theta}\langle J'_{\e}(u_{n}), u_{n}\rangle \\
&\geq \left(\frac{1}{2}-\frac{1}{\theta}\right)[u_{n}]^{2}_{A_{\e}}+\left(\left(\frac{1}{2}-\frac{1}{\theta}  \right)-\frac{1}{k}\right)V_{0} \int_{\R} |u_{n}|^{2}dx \\
&\geq \min\{1, V_{0}\}\left(\left(\frac{1}{2}-\frac{1}{\theta}  \right)-\frac{1}{k}\right) \|u_{n}\|_{\e}^{2}.
\end{align*}
Recalling that $k>\frac{2\theta}{\theta-2}$, we can deduce that $(u_{n})$ is bounded in $\h$. Moreover, by assumption,  $c<\min\{1, V_{0}\} \left(\frac{1}{2}-\frac{1}{\theta} \right)$, so, increasing $k$ if necessary, we can see that
\begin{align}\label{bicchiere}
\limsup_{n\rightarrow \infty}\|u_{n}\|_{\e}^{2}<1.
\end{align}
Since $\h$ is a reflexive space, we can find a subsequence still denoted by $(u_{n})$ and $u\in \h$ such that 
\begin{align}\label{ADOMconv}
\begin{array}{ll}
u_{n}\rightharpoonup u \quad &\mbox{ in } \h \mbox{ as } n\ri \infty, \\
u_{n}\ri u \quad &\mbox{ in } L^{q}_{loc}(\R, \C) \mbox{ for all } q\in [2, \infty) \mbox{ as } n\ri \infty, \\
|u_{n}|\ri |u| \quad &\mbox{ a.e. in } \R \mbox{ as } n\ri \infty.  
\end{array}
\end{align}

By \eqref{bicchiere} and arguing as in Lemma \ref{cor2.3ADOM} we can infer 
\begin{equation}\label{newformula}
\lim_{n\ri \infty} \Re\int_{\R} g(\e x, |u_{n}|^{2})u_{n}\overline{\phi} \, dx = \Re\int_{\R} g_{\e}(x, |u|^{2})u \overline{\phi} \, dx \quad \mbox{ for all } \phi \in C^{\infty}_{c}(\R, \C).
\end{equation}
Taking into account \eqref{ADOMconv}, \eqref{newformula} and the density of $C^{\infty}_{c}(\R, \C)$ in $\h$, we can deduce that
\begin{align*}
\langle J_{\e}'(u), \phi \rangle =0 \quad \mbox{ for all } \phi \in \h, 
\end{align*}
that is $u$ is a critical point for $J_{\e}$.
Consequently, $\langle J_{\e}'(u), u \rangle =0$, or equivalently
\begin{align}\label{new1}
[u]^{2}_{A_{\e}} + \int_{\R} V_{\e}(x) |u|^{2}\, dx = \int_{\R} g_{\e}(x, |u|^{2})|u|^{2}\, dx.
\end{align}
Recalling that $\langle J_{\e}'(u_{n}), u_{n}\rangle =o_{n}(1)$, we also know that
\begin{align}\label{new2}
[u_{n}]^{2}_{A_{\e}} + \int_{\R} V_{\e}(x) |u_{n}|^{2}\, dx = \int_{\R} g_{\e}(x, |u_{n}|^{2})|u_{n}|^{2}\, dx+ o_{n}(1). 
\end{align}
From the compactness of the Sobolev embedding, \eqref{bicchiere} and arguing as in Lemma \ref{cor2.3ADOM} we have
\begin{align}\label{new22}
\lim_{n\ri \infty} \int_{-R}^{R} g_{\e}(x, |u_{n}|^{2})|u_{n}|^{2} dx =  \int_{-R}^{R} g_{\e}(x, |u|^{2})|u|^{2} dx. 
\end{align}
In the light of \eqref{new1}, \eqref{new2} and \eqref{new22}, it will be enough to prove the following claim: \\
{\bf Claim 2} For any $\xi>0$ there exists $R=R_{\xi}>0$ such that $\Lambda_{\e}\subset (-R, R)$ and
\begin{equation}\label{T}
\limsup_{n\rightarrow \infty}\int_{\R\setminus (-R, R)} \int_{\R} \frac{|u_{n}(x)-u_{n}(y)e^{\imath A_{\e}(\frac{x+y}{2})\cdot (x-y)}|^{2}}{|x-y|^{2}} dx dy+\int_{\R\setminus (-R, R)} V_{\e}(x)|u_{n}|^{2}\, dx\leq \xi.
\end{equation}
Indeed, if we assume that \eqref{T} holds true, we can see that $(g_1)$, $(g_2)$ and Sobolev inequality yield
$$
\int_{\R\setminus (-R, R)} g_{\e}(x, |u_{n}|^{2})|u_{n}|^{2}dx<\frac{\xi}{4},
$$
for all $n$ big enough. On the other hand, choosing $R$ large enough, we may assume that
$$
\int_{\R\setminus (-R, R)} g_{\e}(x, |u|^{2})|u|^{2}dx<\frac{\xi}{4},
$$
Combining the above inequalities and \eqref{new22} we can infer that
$$
\lim_{n\ri \infty} \int_{\R} g_{\e}(x, |u_{n}|^{2})|u_{n}|^{2} dx =  \int_{\R} g_{\e}(x, |u|^{2})|u|^{2} dx.
$$
This together with \eqref{new1} and \eqref{new2} yields 
\begin{align*}
\lim_{n\ri \infty} \|u_{n}\|_{\e}^{2} = \|u\|_{\e}^{2}. 
\end{align*}
Since $\h$ is a Hilbert space and $u_{n}\rightharpoonup u$ in $\h$ as $n\ri \infty$, we infer 
$u_{n}\ri u$  in  $\h$ as  $n\ri \infty$. 

Now, it remains to prove the validity of \eqref{T}. Take $\eta_{R}\in C^{\infty}(\R, \R)$ be such that $0\leq \eta_{R}\leq 1$, $\eta_{R}=0$ in $(- \frac{R}{2}, \frac{R}{2})$, $\eta_{R}=1$ in $\R\setminus (-R, R)$ and $|\eta'_{R}|\leq \frac{C}{R}$ for some $C>0$ independent of $R$.

Since $\langle J'_{\e}(u_{n}), \eta_{R}u_{n}\rangle =o_{n}(1)$ we have
\begin{align*}
&\Re\left(\iint_{\R^{2}} \frac{(u_{n}(x)-u_{n}(y)e^{\imath A_{\e}(\frac{x+y}{2})\cdot (x-y)})\overline{(u_{n}(x)\eta_{R}(x)-u_{n}(y)\eta_{R}(y)e^{\imath A_{\e}(\frac{x+y}{2})\cdot (x-y)})}}{|x-y|^{2}}\, dx dy \right)\\
&+\int_{\R} V_{\e}(x)\eta_{R} |u_{n}|^{2}\, dx=\int_{\R} g_{\e}(x, |u_{n}|^{2})|u_{n}|^{2}\eta_{R}\, dx+o_{n}(1).
\end{align*}
Observing that
\begin{align*}
&\Re\left(\iint_{\R^{2}} \frac{(u_{n}(x)-u_{n}(y)e^{\imath A_{\e}(\frac{x+y}{2})\cdot (x-y)})\overline{(u_{n}(x)\eta_{R}(x)-u_{n}(y)\eta_{R}(y)e^{\imath A_{\e}(\frac{x+y}{2})\cdot (x-y)})}}{|x-y|^{2}}\, dx dy \right)\\
&=\Re\left(\iint_{\R^{2}} \overline{u_{n}(y)}e^{-\imath A_{\e}(\frac{x+y}{2})\cdot (x-y)}\frac{(u_{n}(x)-u_{n}(y)e^{\imath A_{\e}(\frac{x+y}{2})\cdot (x-y)})(\eta_{R}(x)-\eta_{R}(y))}{|x-y|^{2}}  \,dx dy\right)\\
&+\iint_{\R^{2}} \eta_{R}(x)\frac{|u_{n}(x)-u_{n}(y)e^{\imath A_{\e}(\frac{x+y}{2})\cdot (x-y)}|^{2}}{|x-y|^{2}}\, dx dy,
\end{align*}
and using $(g_3)$-(ii), we can see that
\begin{align}\label{PS1}
&\iint_{\R^{2}} \eta_{R}(x)\frac{|u_{n}(x)-u_{n}(y)e^{\imath A_{\e}(\frac{x+y}{2})\cdot (x-y)}|^{2}}{|x-y|^{2}}\, dx dy+\int_{\R} V_{\e}(x)\eta_{R} |u_{n}|^{2}\, dx\nonumber\\
&\leq -\Re\left(\iint_{\R^{2}} \overline{u_{n}(y)}e^{-\imath A_{\e}(\frac{x+y}{2})\cdot (x-y)}\frac{(u_{n}(x)-u_{n}(y)e^{\imath A_{\e}(\frac{x+y}{2})\cdot (x-y)})(\eta_{R}(x)-\eta_{R}(y))}{|x-y|^{2}}  \,dx dy\right) \nonumber\\
&+\frac{1}{k}\int_{\R} V_{\e}(x) \eta_{R} |u_{n}|^{2}\, dx+o_{n}(1).
\end{align}
Now, the H\"older inequality and the boundedness of $(u_{n})$ in $\h$ imply
\begin{align}\label{PS2}
&\left|\Re\left(\iint_{\R^{2}} \overline{u_{n}(y)}e^{-\imath A_{\e}(\frac{x+y}{2})\cdot (x-y)}\frac{(u_{n}(x)-u_{n}(y)e^{\imath A_{\e}(\frac{x+y}{2})\cdot (x-y)})(\eta_{R}(x)-\eta_{R}(y))}{|x-y|^{2}}  \,dx dy\right)\right| \nonumber\\
&\leq \left(\iint_{\R^{2}} \frac{|u_{n}(x)-u_{n}(y)e^{\imath A_{\e}(\frac{x+y}{2})\cdot (x-y)}|^{2}}{|x-y|^{2}}\,dxdy  \right)^{\frac{1}{2}} \left(\iint_{\R^{2}} |\overline{u_{n}(y)}|^{2}\frac{|\eta_{R}(x)-\eta_{R}(y)|^{2}}{|x-y|^{2}} \, dxdy\right)^{\frac{1}{2}} \nonumber\\
&\leq C \left(\iint_{\R^{2}} |u_{n}(y)|^{2}\frac{|\eta_{R}(x)-\eta_{R}(y)|^{2}}{|x-y|^{2}} \, dxdy\right)^{\frac{1}{2}}.
\end{align}
Therefore, if we prove that
\begin{equation}\label{PS3}
\lim_{R\rightarrow \infty}\limsup_{n\rightarrow \infty} \iint_{\R^{2}} |u_{n}(y)|^{2}\frac{|\eta_{R}(x)-\eta_{R}(y)|^{2}}{|x-y|^{2}} \, dxdy=0,
\end{equation}
then \eqref{PS1} yields that \eqref{T} holds true. However, the validity of the limit in \eqref{PS3} can be obtained arguing as in the proof of Lemma \ref{lemPSI} observing that in this case we have to use the boundedness of $(u_{n})$ in $\h$ (and then the boundedness of $(|u_{n}|)$ in $L^{q}(\R, \R)$ for all $q\in [2, \infty)$) to get estimates independent of $n\in \mathbb{N}$; see also Lemma $3.4$ in \cite{AI}.
\end{proof}

\noindent
In order to obtain multiple critical points of $J_{\e}$, we will consider $J_{\e}$ constrained on $\N_{\e}$. Therefore, it is needed to prove the following result. 
\begin{prop}\label{propPSc}
Let $c\in \R$ be such that $c<\min\{1, V_{0}\} \left(\frac{1}{2}-\frac{1}{\theta} \right)$. Then, the functional $J_{\e}$ restricted to $\mathcal{N}_{\e}$ satisfies the $(PS)_{c}$ condition at the level $c$.
\end{prop}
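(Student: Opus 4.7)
The plan is to reduce the statement to the unconstrained Palais--Smale condition already established in Lemma \ref{PSc} via a standard Lagrange multiplier argument. Let $(u_n)\subset \mathcal{N}_\e$ be a sequence with $J_\e(u_n)\to c$ and $\|(J_\e|_{\mathcal{N}_\e})'(u_n)\|_{\ast}\to 0$. Define $T_\e(u):=\langle J_\e'(u),u\rangle$, so that $\mathcal{N}_\e=T_\e^{-1}(0)\setminus\{0\}$; the Lagrange multiplier rule then yields a sequence $(\lambda_n)\subset\R$ with
$$
J_\e'(u_n)=\lambda_n T_\e'(u_n)+o_n(1) \quad \text{in } (\h)^{\ast}.
$$
Pairing this identity with $u_n$ and using $\langle J_\e'(u_n),u_n\rangle=0$, we obtain $\lambda_n\langle T_\e'(u_n),u_n\rangle=o_n(1)$.

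A direct computation combined with the Nehari identity gives
$$
\langle T_\e'(u_n),u_n\rangle=-2\int_{\R} g'_\e(x,|u_n|^2)|u_n|^4\,dx.
$$
The core of the proof, and the main obstacle, is to show that this quantity is uniformly bounded away from zero. I would first reproduce the boundedness arguments of Lemma \ref{PSc}: the hypothesis $c<\min\{1,V_0\}(\frac12-\frac1\theta)$ and $(g_3)$ give $\limsup_n\|u_n\|^2_\e<1$, while \eqref{uNr} provides $\|u_n\|_\e\geq r^{\ast}>0$. Since $g_\e=f$ on $\Lambda_\e$, assumption $(f_6)$ implies
$$
\int_{\R} g'_\e(x,|u_n|^2)|u_n|^4\,dx \;\geq\; C_\sigma\int_{\Lambda_\e}|u_n|^\sigma\,dx.
$$

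To bound the right-hand side from below, I would argue by contradiction: if $\int_{\Lambda_\e}|u_n|^\sigma\,dx\to 0$ along a subsequence, then since $\Lambda_\e$ is bounded, the compact embedding (Theorem \ref{Sembedding}) together with Lemma \ref{DI} forces $|u_n|\to 0$ in $L^q(\Lambda_\e)$ for every $q\in[1,\infty)$. Combining the growth condition $(g_2)$ with H\"older's inequality and the exponential integrability provided by Lemma \ref{lem2.2ADOM} (applicable thanks to $\limsup\|u_n\|^2_\e<1$), one deduces $\int_{\Lambda_\e} f(|u_n|^2)|u_n|^2\,dx\to 0$. On the other hand, the Nehari identity and $(g_3)$-(ii) applied on $\Lambda_\e^c$ yield
$$
\Bigl(1-\frac{1}{k}\Bigr)\|u_n\|_\e^2 \;\leq\; \int_{\Lambda_\e} f(|u_n|^2)|u_n|^2\,dx,
$$
which contradicts $\|u_n\|_\e\geq r^{\ast}$.

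Therefore $|\langle T_\e'(u_n),u_n\rangle|\geq c_0>0$, and hence $\lambda_n\to 0$. Since $(T_\e'(u_n))$ is bounded in $(\h)^{\ast}$ (by the boundedness of $(u_n)$ in $\h$ and the growth of $g$ and $g'$ via $(g_2)$ and Lemma \ref{lem2.2ADOM}), we conclude $J_\e'(u_n)\to 0$ in $(\h)^{\ast}$. Thus $(u_n)$ is an unconstrained $(PS)_c$ sequence for $J_\e$, and the thesis follows from Lemma \ref{PSc}.
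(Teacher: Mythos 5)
Your argument is correct and follows essentially the same route as the paper's proof: the Lagrange multiplier identity $J_{\e}'(u_n)=\lambda_n T_{\e}'(u_n)+o_n(1)$, the computation $\langle T_{\e}'(u_n),u_n\rangle=-2\int_{\R} g_{\e}'(x,|u_n|^2)|u_n|^4\,dx$ bounded via $(f_6)$ by $-2C_{\sigma}\int_{\Lambda_{\e}}|u_n|^{\sigma}\,dx$, and a contradiction argument combining the Nehari identity, $(g_3)$-(ii) and the lower bound \eqref{uNr} to rule out vanishing, so that $\lambda_n\to 0$ and Lemma \ref{PSc} applies. The only difference is presentational: the paper deduces $\int_{\Lambda_{\e}}f(|u_n|^2)|u_n|^2\,dx\to 0$ by invoking Lemma \ref{cor2.3ADOM} (after interpolation), while you rederive it from $(g_2)$, H\"older's inequality and Lemma \ref{lem2.2ADOM}, which is the same underlying machinery.
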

\begin{proof}
Let $(u_{n})\subset \mathcal{N}_{\e}$ be such that $J_{\e}(u_{n})\rightarrow c$ and $\|J'_{\e}(u_{n})_{|\mathcal{N}_{\e}}\|_{*}=o_{n}(1)$. Then there exists $(\lambda_{n})\subset \R$ such that
\begin{equation}\label{AFT}
J'_{\e}(u_{n})=\lambda_{n} T'_{\e}(u_{n})+o_{n}(1)
\end{equation}
where $T_{\e}: \h\rightarrow \R$ is given by
\begin{align*}
T_{\e}(u)=\|u\|_{\e}^{2}-\int_{\R} g_{\e}(x, |u|^{2})|u|^{2}\, dx.
\end{align*}
Taking into account $\langle J'_{\e}(u_{n}), u_{n}\rangle=0$, $g_{\e}(x, |u|^{2})$ is constant on $\Lambda_{\e}^{c}\cap \{|u|^{2}>T_{a}\}$, the definition of $g$, the monotonicity of $\xi$ and $(f_6)$, we can see that
\begin{align}
\langle T'_{\e}(u_{n}), u_{n}\rangle&=2\|u_{n}\|_{\e}^{2}-2\int_{\R} g'_{\e}(x, |u_{n}|^{2})|u_{n}|^{4}\, dx-2\int_{\R} g_{\e}(x, |u_{n}|^{2})|u_{n}|^{2}\, dx  \label{22ZS} \\
&=-2\int_{\R^{3}} g'_{\e}(x, |u_{n}|^{2})|u_{n}|^{4}\, dx \nonumber \\
&\leq -2\int_{\Lambda_{\e}\cup \{|u_{n}|^{2}<t_{a}\}} f'(|u_{n}|^{2})|u_{n}|^4 dx \nonumber \\
&\leq -2C_{\sigma} \int_{\Lambda_{\e}\cup \{|u_{n}|^{2}<t_{a}\}} |u_{n}|^{\sigma} dx \nonumber \\
&\leq -2C_{\sigma} \int_{\Lambda_{\e}} |u_{n}|^{\sigma} dx<0.
\end{align}
From the boundedness of $(u_{n})$ in $\h$, we can assume that $\langle T'_{\e}(u_{n}), u_{n}\rangle\rightarrow \ell\leq 0$. 

If $\ell=0$, then  $|u_{n}|\rightarrow 0$ in $L^{\sigma}(\Lambda_{\e}, \R)$. By interpolation, $|u_{n}|\rightarrow 0$ in $L^{r}(\Lambda_{\e}, \R)$ for all $r\geq \sigma$. Now, we note that $u_{n}\in \N_{\e}$ and $J_{\e}(u_{n})\rightarrow c<\min\{1, V_{0}\} \left(\frac{1}{2}-\frac{1}{\theta} \right)$ imply that $\limsup_{n\rightarrow \infty}\|u_{n}\|_{\e}^{2}<1$. Then, using Lemma \ref{cor2.3ADOM} we deduce that 
$$
\int_{\Lambda_{\e}} g_{\e}(x, |u_{n}|^{2})|u_{n}|^{2}\, dx=\int_{\Lambda_{\e}} f(|u_{n}|^{2})|u_{n}|^{2}\, dx\rightarrow 0.
$$
Therefore
$$
\|u_{n}\|^{2}_{\e}=\int_{\Lambda_{\e}^{c}} g_{\e}(x, |u_{n}|^{2})|u_{n}|^{2}\, dx+o_{n}(1)\leq \frac{1}{k}\int_{\Lambda_{\e}^{c}} V_{\e}(x)|u_{n}|^{2} dx+o_{n}(1)
$$
which yields $\|u_{n}\|_{\e}\rightarrow 0$, that is a contradiction because $\|u\|_{\e}\geq r>0$ for all $u\in \N_{\e}$. 

Consequently, $\ell<0$ and taking into account \eqref{AFT} we get $\lambda_{n}\rightarrow 0$, that is $u_{n}$ is a $(PS)_{c}$ sequence for the unconstrained functional.  The thesis follows from Lemma \ref{PSc}.
\end{proof}

\noindent
Arguing as in the previous lemma, it is easy to check that:
\begin{cor}\label{cor}
The critical points of the functional $J_{\e}$ on $\mathcal{N}_{\e}$ are critical points of $J_{\e}$.
\end{cor}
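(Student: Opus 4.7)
The plan is to follow the standard Lagrange multiplier strategy for Nehari constrained problems. Suppose $u\in \mathcal{N}_{\e}$ is a critical point of $J_{\e}|_{\mathcal{N}_{\e}}$. Since $\mathcal{N}_{\e}=\{u\in \h\setminus\{0\}:T_{\e}(u)=0\}$ with $T_{\e}(u)=\|u\|_{\e}^{2}-\int_{\R} g_{\e}(x,|u|^{2})|u|^{2}\,dx$, there exists a Lagrange multiplier $\lambda\in\R$ such that
\[
J'_{\e}(u)=\lambda\,T'_{\e}(u).
\]
My goal is to show that $\lambda=0$, which immediately yields $J'_{\e}(u)=0$ in $(\h)^{*}$.

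To this end, I would test the relation above with $u$ itself. Using $u\in \mathcal{N}_{\e}$, the left-hand side vanishes: $\langle J'_{\e}(u),u\rangle=0$. Thus $\lambda\,\langle T'_{\e}(u),u\rangle=0$, so it suffices to check $\langle T'_{\e}(u),u\rangle\neq 0$. Here I would reuse verbatim the computation carried out in the proof of Proposition \ref{propPSc} (equation \eqref{22ZS} and the lines immediately after it): expanding $\langle T'_{\e}(u),u\rangle$ and using that $u\in\mathcal{N}_{\e}$ cancels the $\|u\|_{\e}^{2}$ and the $g_{\e}$-term, leaving
\[
\langle T'_{\e}(u),u\rangle=-2\int_{\R} g'_{\e}(x,|u|^{2})|u|^{4}\,dx\leq -2C_{\sigma}\int_{\Lambda_{\e}} |u|^{\sigma}\,dx,
\]
where I use $(f_{6})$, the definition of $g$ on $\Lambda_{\e}$, and the monotonicity properties of $\xi$ and $\hat f$ in the complementary region.

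The one point requiring a separate argument is the \emph{strict} inequality $\int_{\Lambda_{\e}}|u|^{\sigma}\,dx>0$. If this integral vanished, then $u\equiv 0$ on $\Lambda_{\e}$, so $g_{\e}(x,|u|^{2})|u|^{2}$ would be supported in $\Lambda_{\e}^{c}$ where $(g_{3})(ii)$ gives $g_{\e}(x,|u|^{2})|u|^{2}\leq \frac{1}{k}V_{\e}(x)|u|^{2}$. Plugging this into $T_{\e}(u)=0$ would force
\[
\|u\|_{\e}^{2}\leq \tfrac{1}{k}\int_{\R}V_{\e}(x)|u|^{2}\,dx\leq \tfrac{1}{k}\|u\|_{\e}^{2},
\]
which contradicts $\|u\|_{\e}\geq r^{*}>0$ from \eqref{uNr} since $k>\tfrac{2\theta}{\theta-2}>1$. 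Hence $\langle T'_{\e}(u),u\rangle<0$ strictly, forcing $\lambda=0$ and concluding the proof. The main (and only nontrivial) obstacle is this strict negativity, but as just indicated it is entirely controlled by $(f_{6})$ together with the penalization bound $(g_{3})(ii)$, exactly as in Proposition \ref{propPSc}.
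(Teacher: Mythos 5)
Your proposal is correct and follows essentially the same route the paper intends: the paper proves the corollary by "arguing as in the previous lemma" (Proposition \ref{propPSc}), i.e.\ Lagrange multiplier, testing with $u$, and the computation $\langle T'_{\e}(u),u\rangle=-2\int_{\R}g'_{\e}(x,|u|^{2})|u|^{4}\,dx\leq -2C_{\sigma}\int_{\Lambda_{\e}}|u|^{\sigma}\,dx<0$ via $(f_{6})$, with the strict sign obtained exactly as you do from $(g_{3})(ii)$ and the lower bound \eqref{uNr} on $\N_{\e}$ (and note that this same computation, being independent of $\lambda$, also guarantees $T'_{\e}(u)\neq 0$ so the multiplier rule applies). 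No gaps.
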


\noindent	
We conclude this section giving the following existence result for \eqref{MPe}:
\begin{thm}
There exists $\e_{0}>0$ such that problem \eqref{MPe} admits a nontrivial solution for all $\e\in (0, \e_{0})$.
\end{thm}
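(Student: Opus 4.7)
The plan is to obtain the solution as a mountain pass critical point of the functional $J_{\e}$ and to use the energy control from Lemma \ref{AMlem1} to place the mountain pass level inside the region where the Palais--Smale condition holds.

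First, Lemma \ref{MPG} already shows that $J_{\e}$ has the mountain pass geometry on $\h$, so the mountain pass theorem of Ambrosetti--Rabinowitz gives a $(PS)_{c_{\e}}$ sequence $(u_{n})\subset \h$ with
$$
J_{\e}(u_{n})\ri c_{\e},\qquad J'_{\e}(u_{n})\ri 0,
$$
where $c_{\e}\geq \sigma>0$ is the mountain pass level. The key observation is Lemma \ref{AMlem1}, which tells us that
$$
\limsup_{\e\ri 0}c_{\e}\leq c_{V_{0}}<\min\{1,V_{0}\}\left(\tfrac{1}{2}-\tfrac{1}{\theta}\right).
$$
Hence there exists $\e_{0}>0$ such that for every $\e\in(0,\e_{0})$ the mountain pass level satisfies
$$
c_{\e}<\min\{1,V_{0}\}\left(\tfrac{1}{2}-\tfrac{1}{\theta}\right).
$$

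With this threshold secured, the second step is to apply Lemma \ref{PSc}: the functional $J_{\e}$ satisfies the Palais--Smale condition at every level strictly below $\min\{1,V_{0\}}(\tfrac{1}{2}-\tfrac{1}{\theta})$. Consequently, up to a subsequence, $u_{n}\ri u_{\e}$ strongly in $\h$, so $u_{\e}$ is a critical point of $J_{\e}$ and therefore a weak solution of \eqref{MPe}. The continuity of $J_{\e}$ and $J_{\e}'$ pass to the limit to give $J_{\e}(u_{\e})=c_{\e}$ and $J_{\e}'(u_{\e})=0$.

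Finally, to check that $u_{\e}$ is nontrivial, we use the lower bound $c_{\e}\geq \sigma>0$ provided by Lemma \ref{MPG}(ii): if we had $u_{\e}=0$ then $J_{\e}(u_{\e})=0$, contradicting $J_{\e}(u_{\e})=c_{\e}\geq \sigma>0$. Hence $u_{\e}\not\equiv 0$, which concludes the proof. There is no real obstacle here beyond assembling the three preceding results; the only substantive input is the sharp energy estimate of Lemma \ref{AMlem1} together with the Remark \ref{Rodica} bound, which is precisely what guarantees that the mountain pass level lies in the regime where compactness is available despite the exponential critical growth of $f$.
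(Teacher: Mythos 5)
Your argument is correct and is exactly the paper's proof, only written out in more detail: the paper likewise combines the mountain pass geometry of Lemma \ref{MPG}, the level estimate $\limsup_{\e\to 0}c_{\e}\leq c_{V_{0}}<\min\{1,V_{0}\}\left(\frac{1}{2}-\frac{1}{\theta}\right)$ from Lemma \ref{AMlem1}, and the Palais--Smale condition below that threshold from Lemma \ref{PSc}, and then invokes the mountain pass theorem. The nontriviality argument via $c_{\e}\geq\sigma>0$ is the standard conclusion implicit in the paper's appeal to the mountain pass theorem.
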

\begin{proof}
Taking into account Lemma \ref{MPG}, Lemma \ref{AMlem1}, Lemma \ref{PSc} and applying mountain pass theorem \cite{AR}, we can deduce that \eqref{MPe} admits a nontrivial solution provided that $\e>0$ is sufficiently small.
\end{proof}

\section{Multiple solutions for the modified problem}

\noindent
In this section, we show that it is possible to relate the number of nontrivial solutions of \eqref{MPe} to the topology of the set $\Lambda$.
For this reason, we consider $\delta>0$ such that
$$
M_{\delta}=\{x\in \R: {\rm dist}(x, M)\leq \delta\}\subset \Lambda,
$$
and we choose $\eta\in C^{\infty}_{0}(\R_{+}, [0, 1])$ such that $\eta(t)=1$ if $0\leq t\leq \frac{\delta}{2}$ and $\eta(t)=0$ if $t\geq \delta$.

For any $y\in \Lambda$, we introduce (see \cite{AD})
$$
\Psi_{\e, y}(x)=\eta(|\e x-y|) w\left(\frac{\e x-y}{\e}\right)e^{\imath \tau_{y} \left( \frac{\e x-y}{\e} \right)},
$$
where $\tau_{y}(x)=\sum_{j=1}^{3}A_{j}(y)x_{j}$ and $w\in H^{1/2}_{V_{0}}(\R, \R)$ is a positive ground state solution to the autonomous problem \eqref{AP0} with $\mu=V_{0}$ (see Lemma \ref{FS}).
Let $t_{\e}>0$ be the unique number such that 
$$
\max_{t\geq 0} J_{\e}(t \Psi_{\e, y})=J_{\e}(t_{\e} \Psi_{\e, y}). 
$$
Noting that $t_{\e} \Psi_{\e, y}\in \N_{\e}$, we can define $\Phi_{\e}: M\rightarrow \N_{\e}$ as
$$
\Phi_{\e}(y)= t_{\e} \Psi_{\e, y}.
$$
\begin{lem}\label{lem3.4}
The functional $\Phi_{\e}$ satisfies the following limit
\begin{equation*}
\lim_{\e\rightarrow 0} J_{\e}(\Phi_{\e}(y))=c_{V_{0}} \quad \mbox{ uniformly in } y\in M.
\end{equation*}
\end{lem}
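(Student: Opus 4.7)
\medskip

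\noindent\textbf{Proof proposal.} The plan is to argue by contradiction. Suppose the claim fails; then there exist $\eta_{0}>0$, a sequence $\e_{n}\to 0$, and a sequence $(y_{n})\subset M$ with $|J_{\e_{n}}(\Phi_{\e_{n}}(y_{n}))-c_{V_{0}}|\geq \eta_{0}$. Since $M$ is compact, up to a subsequence we may assume $y_{n}\to y_{0}\in M$, and by $(V_{2})$ we have $V(y_{0})=V_{0}$. After the change of variable $z=(\e_{n}x-y_{n})/\e_{n}$, write $\Psi_{\e_{n},y_{n}}(y_{n}/\e_{n}+z)=\eta(\e_{n}|z|)\,w(z)\,e^{\imath\tau_{y_{n}}(z)}$, so that $|\Psi_{\e_{n},y_{n}}|=\eta(\e_{n}|\cdot|)w$ (after translation). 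The translation invariance of both $[\,\cdot\,]_{A_{\e}}$ (with the shifted magnetic field $B_{n}(z):=A(y_{n}+\e_{n}z)$) and of the $L^{2}$-norm reduces everything to studying the quantities on the translated function.

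\smallskip

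The first step is to prove that
\[
\lim_{n\to\infty}\|\Psi_{\e_{n},y_{n}}\|_{\e_{n}}^{2}=\|w\|_{V_{0}}^{2}.
\]
For the potential part, after the above change of variable one has $\int_{\R}V(\e_{n}x)|\Psi_{\e_{n},y_{n}}|^{2}dx=\int_{\R}V(y_{n}+\e_{n}z)\,\eta^{2}(\e_{n}|z|)\,w^{2}(z)\,dz$, which tends to $V_{0}\int_{\R}w^{2}\,dz$ by continuity of $V$, the convergence $y_{n}\to y_{0}\in M$, and the Dominated Convergence Theorem. For the magnetic seminorm, I mimic the strategy used in the proof of Lemma~\ref{AMlem1}: after translation and the exponential factorization, decompose
\[
[\Psi_{\e_{n},y_{n}}]_{A_{\e_{n}}}^{2}=[\eta(\e_{n}|\cdot|)w]^{2}+X_{n}+2Y_{n},
\]
where $X_{n}$ and $Y_{n}$ involve the oscillating factor $e^{\imath[B_{n}((z+z')/2)-A(y_{n})]\cdot(z-z')}-1$. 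The first term converges to $[w]^{2}$ by Lemma~\ref{lemPSI}, while $X_{n}\to 0$ is obtained by splitting the integration domain into $\{|z-z'|\geq \e_{n}^{-\beta}\}$ (using the trivial bound $|e^{\imath t}-1|\leq 2$ and integrability of $w^{2}$) and $\{|z-z'|<\e_{n}^{-\beta}\}$ (using $|e^{\imath t}-1|\leq |t|$ together with $|B_{n}((z+z')/2)-A(y_{n})|\leq C\e_{n}^{\alpha}|z+z'|^{\alpha}$ from $A\in C^{0,\alpha}$), exactly as in \eqref{Ye}--\eqref{Ye22}; the Cauchy--Schwarz inequality $|Y_{n}|\leq [\eta(\e_{n}|\cdot|)w]\sqrt{X_{n}}$ then controls the mixed term.

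\smallskip

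The second step is to show $t_{\e_{n}}\to 1$. Since $\mathrm{supp}(\Psi_{\e_{n},y_{n}})\subset\{x:|\e_{n}x-y_{n}|\leq\delta\}$ and $M_{\delta}\subset\Lambda$, for $y_{n}\in M$ and $n$ large we have $\e_{n}x\in\Lambda$ on $\mathrm{supp}(\Psi_{\e_{n},y_{n}})$, hence $g_{\e_{n}}(x,|\Psi_{\e_{n},y_{n}}|^{2})=f(|\Psi_{\e_{n},y_{n}}|^{2})$ there. The Nehari identity $\langle J_{\e_{n}}'(t_{\e_{n}}\Psi_{\e_{n},y_{n}}),t_{\e_{n}}\Psi_{\e_{n},y_{n}}\rangle=0$ becomes
\[
\|\Psi_{\e_{n},y_{n}}\|_{\e_{n}}^{2}=\int_{\R}f\bigl(t_{\e_{n}}^{2}\eta^{2}(\e_{n}|z|)w^{2}(z)\bigr)\eta^{2}(\e_{n}|z|)w^{2}(z)\,dz.
\]
If $t_{\e_{n}}\to\infty$ along a subsequence, then $(f_{3})$ (Ambrosetti--Rabinowitz type), the positivity of $w$ on a fixed compact set where $\eta(\e_{n}|\cdot|)=1$, together with the bound on $\|\Psi_{\e_{n},y_{n}}\|_{\e_{n}}^{2}$ from the first step, yield a contradiction; if $t_{\e_{n}}\to 0$, then $(g_{1})$ (or equivalently $(f_{1})$) and the growth hypothesis force $\|w\|_{V_{0}}^{2}=0$, again impossible. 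Hence $t_{\e_{n}}\to t_{0}\in(0,\infty)$, and passing to the limit one gets $\|w\|_{V_{0}}^{2}=\int_{\R}f(t_{0}^{2}w^{2})w^{2}\,dz$. Since $w\in\mathcal{N}_{V_{0}}$ and $t\mapsto f(t)$ is increasing by $(f_{4})$, the uniqueness of the Nehari projection gives $t_{0}=1$.

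\smallskip

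Finally, the Dominated Convergence Theorem applied to $\int_{\R}F(t_{\e_{n}}^{2}|\Psi_{\e_{n},y_{n}}|^{2})dx$ (using the growth bound on $F$ and the fact that the supports are uniformly controlled after translation) together with the two steps above yields
\[
J_{\e_{n}}(\Phi_{\e_{n}}(y_{n}))=\tfrac{t_{\e_{n}}^{2}}{2}\|\Psi_{\e_{n},y_{n}}\|_{\e_{n}}^{2}-\tfrac{1}{2}\int_{\R}F(t_{\e_{n}}^{2}|\Psi_{\e_{n},y_{n}}|^{2})\,dx\longrightarrow J_{V_{0}}(w)=c_{V_{0}},
\]
contradicting the assumption and completing the proof. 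The main technical obstacle is the second-step control of the magnetic seminorm: one must carefully exploit the H\"older continuity of $A$ to neutralize the non-local phase $e^{\imath A_{\e_{n}}((x+y)/2)\cdot(x-y)}$ against the reference phase $e^{\imath A(y_{n})\cdot(x-y)}$, which is precisely why $A\in C^{0,\alpha}$ is required in the hypotheses.
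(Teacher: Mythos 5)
Your proposal is correct and follows essentially the same route as the paper: contradiction via sequences $\e_{n}\to 0$, $(y_{n})\subset M$, the norm convergence $\|\Psi_{\e_{n},y_{n}}\|_{\e_{n}}^{2}\to\|w\|_{V_{0}}^{2}$, the Nehari identity to show $t_{\e_{n}}\to 1$ using $(f_{3})$, $(f_{4})$ and \eqref{uNr}, and the Dominated Convergence Theorem to conclude $J_{\e_{n}}(\Phi_{\e_{n}}(y_{n}))\to c_{V_{0}}$. The only cosmetic difference is that you spell out the magnetic-seminorm convergence by repeating the computation of Lemma~\ref{AMlem1} (including the use of the decay of $w$ in the estimate of type \eqref{Ye22}), whereas the paper simply cites Lemma $4.1$ of \cite{AD} for this step.
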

\begin{proof}
Assume by contradiction that there exist $\delta_{0}>0$, $(y_{n})\subset M$ and $\e_{n}\rightarrow 0$ such that 
\begin{equation}\label{puac}
|J_{\e_{n}}(\Phi_{\e_{n}}(y_{n}))-c_{V_{0}}|\geq \delta_{0}.
\end{equation}
In order to lighten the notation, we write $\Phi_{n}$, $\Psi_{n}$ and $t_{n}$ to denote $\Phi_{\e_{n}}(y_{n})$, $\Psi_{\e_{n}, y_{n}}$ and $t_{\e_{n}}$, respectively.
Arguing as in Lemma $4.1$ in \cite{AD} and applying the Dominated Convergence Theorem we get  
\begin{align}\begin{split}\label{nio3}
\| \Psi_{n} \|^{2}_{\e_{n}}\rightarrow \|w\|^{2}_{V_{0}}\in (0, \infty). 
\end{split}\end{align}
On the other hand, since $\langle J'_{\e_{n}}(t_{n}\Psi_{n}),t_{n}\Psi_{n}\rangle=0$ and using the change of variable $z=\frac{\e_{n}x-y_{n}}{\e_{n}}$ it follows that
\begin{align*}
&t_{n}^{2}\|\Psi_{n}\|_{\e_{n}}^{2} =\int_{\R} g(\e_{n}z+y_{n}, |t_{n}\eta(|\e_{n}z|)w(z)|^{2}) |t_{\e_{n}}\eta(|\e_{n}z|)w(z)|^{2} dz.
\end{align*}
If $z\in (- \frac{\delta}{\e_{n}}, \frac{\delta}{\e_{n}})$ then $\e_{n} z+y_{n}\in (y_{n}-\delta, y_{n}+\delta)\subset M_{\delta}\subset \Lambda$. 
Hence, being $g(x,t)=f(t)$ for all $x\in \Lambda$ and $\eta(t)=0$ for $t\geq \delta$, we have
\begin{align}\label{1nio}
\|\Psi_{n}\|_{\e_{n}}^{2} =\int_{\R} f(|t_{n}\eta(|\e_{n}z|)w(z)|^{2}) |\eta(|\e_{n}z|)w(z)|^{2} dz.
\end{align}
Since $\eta=1$ in $(-\frac{\delta}{2}, \frac{\delta}{2})\subset (-\frac{\delta}{2\e_{n}}, \frac{\delta}{2\e_{n}})$ for all $n$ large enough, we get from \eqref{1nio} and $(f_4)$
\begin{align}\label{nioo}
 \|\Psi_{n}\|_{\e_{n}}^{2}&\geq\int_{-\frac{\delta}{2}}^{\frac{\delta}{2}} f(|t_{n} w(z)|^{2})  |w(z)|^{2}dz \nonumber \\
&\geq  f(|t_{n} \alpha|^{2}) \int_{-\frac{\delta}{2}}^{\frac{\delta}{2}}  |w(z)|^{2}dz,
\end{align}
where
\begin{equation*}
\alpha=\min_{|z|\leq \frac{\delta}{2}} w(z)>0.
\end{equation*} 
Now, if  $t_{n}\rightarrow \infty$, we can use \eqref{nioo}, \eqref{nio3} and $(f_{3})$ to deduce a contradiction.
Therefore $(t_{n})$ is bounded and, up to subsequence, we may assume that $t_{n}\rightarrow t_{0}$ for some $t_{0}\geq 0$.  
Let us prove that $t_{0}>0$. Otherwise, if $t_{0}=0$,
we can use \eqref{nio3}, the growth assumptions on $g$ and \eqref{1nio} to see that
\begin{align*}
\| \Psi_{n}\|_{\e_{n}}^{2}\rightarrow 0
\end{align*}
which is impossible because of $t_{\e} \Psi_{\e, y}\in \N_{\e}$ and \eqref{uNr}. 
Hence $t_{0}>0$.
Taking the limit as $n\rightarrow \infty$ in \eqref{1nio}, we deduce that
\begin{align*}
\|w\|^{2}_{V_{0}}=\int_{\R} f((t_{0} w)^{2}) \,w^{2} \, dx.
\end{align*}
In the light of $w\in \N_{V_{0}}$ and $(f_4)$ we can deduce that $t_{0}=1$. This and the Dominated Convergence Theorem imply that
$$
\int_{\R} F(|t_{n} \Psi_{n}|^{2}) \,dx\ri \int_{\R} F(|w|^{2}) \,dx.
$$
Hence, letting the limit as $n\rightarrow \infty$ in  
$$
 J_{\e_{n}}(\Phi_{n} )=\frac{t_{n}^{2}}{2}\|\Psi_{n}\|_{\e_{n}}^{2}-\frac{1}{2}\int_{\R} F(|t_{n} \Psi_{n}|^{2}) \,dx,
$$
we can conclude that
$$
\lim_{n\rightarrow \infty} J_{\e_{n}}(\Phi_{\e_{n}} (y_{n}))=J_{V_{0}}(w)=c_{V_{0}},
$$
which contradicts \eqref{puac}.
\end{proof}

\noindent 
For any $\delta>0$, we take $\rho=\rho(\delta)>0$ in such way that $M_{\delta}\subset (-\rho, \rho)$. Let $\varUpsilon: \R\ri \R$ be defined as 
\begin{equation*}
\varUpsilon(x)= x \quad \mbox{ if } |x|<\rho \quad \mbox{ and } \quad \varUpsilon(x)=\frac{\rho x}{|x|} \quad \mbox{ if } |x|\geq \rho.
\end{equation*}
Finally, we consider the barycenter map $\beta_{\e}: \N_{\e}\rightarrow \R$ given by
\begin{align*}
\beta_{\e}(u)=\frac{\displaystyle{\int_{\R} \varUpsilon(\e x)|u(x)|^{2} \,dx}}{\displaystyle{\int_{\R} |u(x)|^{2} \,dx}}.
\end{align*}

\noindent
Arguing as Lemma $4.3$ in \cite{AD}, it is easy to see that the function $\beta_{\e}$ verifies the following limit:
\begin{lem}\label{lem3.5N}
\begin{equation*}
\lim_{\e \rightarrow 0} \beta_{\e}(\Phi_{\e}(y))=y \quad \mbox{ uniformly in } y\in M.
\end{equation*}
\end{lem}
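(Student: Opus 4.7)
I will argue by contradiction, mimicking the strategy already employed in Lemma \ref{lem3.4}. Suppose the claim fails. Then there exist $\delta_{0}>0$, a sequence $(y_{n})\subset M$ and $\e_{n}\rightarrow 0$ with
\begin{equation*}
|\beta_{\e_{n}}(\Phi_{\e_{n}}(y_{n}))-y_{n}|\geq \delta_{0}.
\end{equation*}
Writing $\Phi_{n}=\Phi_{\e_{n}}(y_{n})$ and exploiting the explicit form of $\Psi_{\e_{n},y_{n}}$, I observe that $|\Phi_{n}(x)|^{2}= t_{\e_{n}}^{2}\,\eta^{2}(|\e_{n}x-y_{n}|)\,w^{2}\!\bigl(\tfrac{\e_{n}x-y_{n}}{\e_{n}}\bigr)$, so the multiplicative factor $t_{\e_{n}}^{2}$ cancels in the quotient defining $\beta_{\e_{n}}$.

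The key step is the change of variable $z=\frac{\e_{n}x-y_{n}}{\e_{n}}$, after which
\begin{equation*}
\beta_{\e_{n}}(\Phi_{n})-y_{n}=\frac{\displaystyle\int_{\R}\bigl[\varUpsilon(\e_{n}z+y_{n})-y_{n}\bigr]\,\eta^{2}(|\e_{n}z|)\,w^{2}(z)\,dz}{\displaystyle\int_{\R}\eta^{2}(|\e_{n}z|)\,w^{2}(z)\,dz}.
\end{equation*}
Here I exploit crucially the assumption $M_{\delta}\subset(-\rho,\rho)$: whenever $\eta(|\e_{n}z|)\neq 0$ one has $|\e_{n}z|\leq\delta$, so for any $y_{n}\in M$ the point $\e_{n}z+y_{n}$ lies in $M_{\delta}\subset(-\rho,\rho)$, whence $\varUpsilon(\e_{n}z+y_{n})=\e_{n}z+y_{n}$. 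Therefore the numerator reduces to
\begin{equation*}
\int_{\R}\e_{n}z\,\eta^{2}(|\e_{n}z|)\,w^{2}(z)\,dz,
\end{equation*}
which, crucially, is \emph{independent} of $y_{n}$.

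To conclude I apply dominated convergence twice: since $|\e_{n}z|\,\eta^{2}(|\e_{n}z|)\leq \delta$ on the support of $\eta$ and is dominated by $\delta\, w^{2}(z)\in L^{1}(\R)$, while $\e_{n}z\,\eta^{2}(|\e_{n}z|)\to 0$ pointwise as $\e_{n}\to 0$, the numerator tends to $0$; similarly $\eta^{2}(|\e_{n}z|)\,w^{2}(z)\to w^{2}(z)$ with dominant $w^{2}$, so the denominator tends to $\|w\|_{L^{2}(\R)}^{2}>0$. Hence $\beta_{\e_{n}}(\Phi_{n})-y_{n}\to 0$, contradicting the assumed lower bound $\delta_{0}$. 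The only subtle point is the uniformity in $y\in M$, but that is precisely ensured by the fact that after the above reduction the numerator no longer depends on $y_{n}$; no obstacle is really present beyond carefully verifying that the support of $\eta(|\e_{n}\cdot|)$ is moved by $y_{n}$ into the region where $\varUpsilon$ acts as the identity, which is the role of the inclusion $M_{\delta}\subset(-\rho,\rho)$.
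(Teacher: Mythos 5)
Your proof is correct and is essentially the same argument the paper relies on (it simply defers to Lemma $4.3$ of \cite{AD}): the contradiction setup, the change of variable $z=\frac{\e_{n}x-y_{n}}{\e_{n}}$ with cancellation of $t_{\e_{n}}^{2}$, the observation that $\varUpsilon$ acts as the identity on $M_{\delta}\subset(-\rho,\rho)$ so the numerator becomes independent of $y_{n}$, and dominated convergence for numerator and denominator. Nothing is missing; the uniformity in $y\in M$ is indeed secured exactly as you indicate.
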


The next compactness result is fundamental to show that the solutions of the modified problem are solutions of the original problem.
\begin{lem}\label{prop3.3}
Let $\e_{n}\rightarrow 0$ and $u_{n}\in\N_{\e_{n}}$ for all $n\in \mathbb{N}$ be such that $J_{\e_{n}}(u_{n})\rightarrow c_{V_{0}}$. Then there exists $(\tilde{y}_{n})\subset \R$ such that $v_{n}(x)=|u_{n}|(x+\tilde{y}_{n})$ has a convergent subsequence in $H^{1/2}(\R, \R)$. Moreover, up to a subsequence, $y_{n}=\e_{n} \tilde{y}_{n}\rightarrow y_{0}$ for some $y_{0}\in M$.
\end{lem}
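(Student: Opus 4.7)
The proof follows the standard concentration-compactness scheme adapted to the penalized magnetic setting. We outline three steps.

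\emph{Step 1 (Boundedness and non-vanishing).} Arguing as in Lemma~\ref{PSc}, $(g_3)$, the Nehari identity $\langle J'_{\e_n}(u_n),u_n\rangle=0$, the choice $k>\tfrac{2\theta}{\theta-2}$, and the strict bound $c_{V_0}<\min\{1,V_0\}\bigl(\tfrac12-\tfrac1\theta\bigr)$ from Remark~\ref{Rodica} together yield $\limsup_n\|u_n\|_{\e_n}^2<1$. By Lemma~\ref{DI}, $(|u_n|)$ is bounded in $H^{1/2}(\R,\R)$. To rule out vanishing, suppose $\sup_{y\in\R}\int_{y-R}^{y+R}|u_n|^2\,dx\to 0$ for some $R>0$. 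Lemma~\ref{vanishingF} gives $\int_\R f(|u_n|^2)|u_n|^2\,dx\to 0$, and since $g_{\e_n}=f$ on $\Lambda_{\e_n}$ and $g_{\e_n}(x,t)\,t\leq V_{\e_n}(x)\,t/k$ on $\Lambda_{\e_n}^c$, the Nehari identity forces
$$\bigl(1-\tfrac1k\bigr)\|u_n\|_{\e_n}^2\leq\int_\R f(|u_n|^2)|u_n|^2\,dx\to 0,$$
contradicting \eqref{uNr}. Hence there exist $\beta>0$ and $(\tilde y_n)\subset\R$ with $\int_{\tilde y_n-R}^{\tilde y_n+R}|u_n|^2\,dx\geq\beta$, and if $v_n(x):=|u_n|(x+\tilde y_n)$ then, up to subsequence, $v_n\rightharpoonup v$ in $H^{1/2}(\R,\R)$ with $v\not\equiv 0$ by the compact local embedding.

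\emph{Step 2 (Locating $y_n=\e_n\tilde y_n$ in $M$).} Let $t_*>0$ be the unique number furnished by $(f_4)$ with $t_*v\in\mathcal{N}_{V_0}$, so that $J_{V_0}(t_*v)\geq c_{V_0}$. Since $u_n\in\mathcal{N}_{\e_n}$ and $(g_4)$ makes $t\mapsto J_{\e_n}(tu_n)$ attain its maximum at $t=1$, we have $J_{\e_n}(u_n)\geq J_{\e_n}(t_*u_n)$. Translating by $\tilde y_n$, invoking Lemma~\ref{DI} for the magnetic Gagliardo seminorm, the weak lower semicontinuity of $[\,\cdot\,]$, and Fatou for the translated potential $V(\e_n x+y_n)\geq V_0$, together with Lemma~\ref{cor2.3ADOM} and a tail-at-infinity estimate to pass to the limit in the nonlinear term, one obtains
$$c_{V_0}=\lim_n J_{\e_n}(u_n)\geq\liminf_n J_{\e_n}(t_*u_n)\geq J_{V(y_0)}(t_*v)\geq c_{V(y_0)}\geq c_{V_0}$$
whenever $y_n\to y_0\in\overline\Lambda$ along a subsequence, where the last inequality uses monotonicity of $\mu\mapsto c_\mu$. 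If instead $|y_n|\to\infty$ or $y_0\notin\overline\Lambda$, then for every fixed $R>0$ the sets $\tilde y_n+(-R,R)$ lie in $\Lambda_{\e_n}^c$ for $n$ large, so the limiting nonlinearity becomes $\tilde f$ (with $\tilde f<f$ strictly on $\{t>t_a\}$); the corresponding autonomous Nehari level built with $\tilde f$ is strictly larger than $c_{V_0}$, again contradicting the displayed chain. Hence $y_n\to y_0\in\overline\Lambda$ and equality throughout forces $V(y_0)=V_0$; by $(V_2)$, $V>V_0$ on $\partial\Lambda$, so $y_0\in\Lambda\cap\{V=V_0\}=M$.

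\emph{Step 3 (Strong convergence).} Equality in the Fatou chain of Step~2 simultaneously yields $[v_n]\to[v]$ and $\int V(\e_n x+y_n)v_n^2\,dx\to V_0\|v\|_{L^2}^2$; since $H^{1/2}(\R,\R)$ is a Hilbert space and $v_n\rightharpoonup v$, weak convergence plus convergence of norms upgrades to strong convergence $v_n\to v$ in $H^{1/2}(\R,\R)$. The \emph{main obstacle} of the proof is Step~2: one must transfer the magnetic energy $J_{\e_n}(t_*u_n)$ to a real lower bound involving $v_n$, using the pointwise diamagnetic inequality together with careful control of the translated H\"older-continuous magnetic field $A(\e_n x+y_n)$ (compare the argument leading to \eqref{limwr*} in Lemma~\ref{AMlem1}) and the Moser--Trudinger machinery of Lemmas~\ref{lem2.1ADOM}--\ref{cor2.3ADOM} for the exponentially growing nonlinear term; moreover, the exclusion of $y_0\notin\overline\Lambda$ requires showing that the autonomous ground-state level built with $\tilde f$ strictly exceeds $c_{V_0}$, which rests on the strict inequality $\tilde f<f$ outside the well together with $v\not\equiv 0$.
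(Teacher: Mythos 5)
Your Step 1 is fine and is essentially the paper's argument. The problem lies in Steps 2--3, where the argument has a genuine gap and, as structured, is circular. The central inequality of your chain, $\liminf_{n} J_{\e_{n}}(t_{*}u_{n})\geq J_{V(y_{0})}(t_{*}v)$, requires passing to the limit in the penalized nonlinear term \emph{from above}, i.e. $\limsup_{n}\int_{\R} G(\e_{n}x+y_{n}, t_{*}^{2}v_{n}^{2})\,dx\leq \int_{\R} F(t_{*}^{2}v^{2})\,dx$. Under the mere weak convergence $v_{n}\rightharpoonup v$ available at that stage this is exactly the direction one cannot get: Fatou gives only $\liminf_{n}\int F(t_{*}^{2}v_{n}^{2})\,dx\geq \int F(t_{*}^{2}v^{2})\,dx$, Lemma \ref{cor2.3ADOM} controls only bounded intervals, and mass escaping to infinity raises both the quadratic and the nonlinear part, so the sign of the difference is not controlled. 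The ``tail-at-infinity estimate'' you invoke is precisely what strong convergence would provide --- but you only derive strong convergence in Step 3 \emph{from} equality in this chain, so the argument feeds on itself. Two further flaws in the same chain: $t_{*}$ was chosen so that $t_{*}v\in \mathcal{N}_{V_{0}}$, hence $J_{V(y_{0})}(t_{*}v)\geq c_{V(y_{0})}$ is unjustified (it would need $t_{*}v\in\mathcal{N}_{V(y_{0})}$), and even granting it, deducing $V(y_{0})=V_{0}$ from $c_{V(y_{0})}=c_{V_{0}}$ requires strict monotonicity of $\mu\mapsto c_{\mu}$, which you do not prove. Likewise, excluding $|y_{n}|\to\infty$ via ``the autonomous level built with $\tilde f$ exceeds $c_{V_{0}}$'' presupposes a localization of the energy onto the region where $g=\tilde f$, which again needs tail control that is unavailable before compactness is established.

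The paper's route avoids all of this by reversing the order and the direction of the comparison. One projects the translated moduli themselves, $t_{n}v_{n}\in\mathcal{N}_{V_{0}}$, and uses the diamagnetic inequality (Lemma \ref{DI}), $V\geq V_{0}$ and $G_{\e}\leq F$ to get the \emph{upper} bound $c_{V_{0}}\leq J_{V_{0}}(t_{n}v_{n})\leq \max_{t\geq 0}J_{\e_{n}}(tu_{n})=J_{\e_{n}}(u_{n})\to c_{V_{0}}$, which needs no limit passage in the nonlinearity at all. Hence $(t_{n}v_{n})$ is a minimizing sequence on $\mathcal{N}_{V_{0}}$, and the autonomous compactness result, Lemma \ref{FS} --- which your proposal never invokes --- yields strong convergence \emph{first} (this is \eqref{elena}). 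Only afterwards are the boundedness of $(y_{n})$, $y_{0}\in\overline{\Lambda}$ and $V(y_{0})=V_{0}$ obtained: the strong convergence kills the far tail where $f$ rather than $\tilde f$ appears (using $\tilde f\leq V_{0}/k$ one gets $v_{n}\to 0$ if $|y_{n}|\to\infty$ or $y_{0}\notin\overline{\Lambda}$, contradicting $v\not\equiv 0$), and a Fatou argument that is strict precisely because $\tilde v\not\equiv 0$ rules out $V(y_{0})>V_{0}$, after which $(V_{2})$ gives $y_{0}\in M$. To repair your proof you would have to import this projection-plus-Lemma~\ref{FS} step; as written, Steps 2--3 do not go through.
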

\begin{proof}
Taking into account $\langle J'_{\e_{n}}(u_{n}), u_{n}\rangle=0$, $J_{\e_{n}}(u_{n})= c_{V_{0}}+o_{n}(1)$ and Lemma \ref{FS}, we can argue as in the first part of Lemma \ref{PSc} to see that $(u_{n})$ is bounded in $H^{1/2}_{\e_{n}}$ and $\limsup_{n\rightarrow \infty}\|u_{n}\|_{\e_{n}}^{2}<1$. 
Moreover, from Lemma \ref{DI} and $(V_1)$, we also know that $(|u_{n}|)$ is bounded in $H^{1/2}_{V_{0}}(\R, \R)$.

Now, we prove that there exist a sequence $(\tilde{y}_{n})\subset \R$ and constants $R>0$ and $\gamma>0$ such that
\begin{equation}\label{sacchi}
\liminf_{n\rightarrow \infty}\int_{\tilde{y}_{n}-R}^{\tilde{y}_{n}+R} |u_{n}|^{2} \, dx\geq \gamma>0.
\end{equation}
If by contradiction \eqref{sacchi} does not hold, then for all $R>0$ we get
$$
\lim_{n\rightarrow \infty}\sup_{y\in \R}\int_{y-R}^{y+R} |u_{n}|^{2} \, dx=0.
$$
From the boundedness $(|u_{n}|)$ and Lemma \ref{lions lemma} we can see that $|u_{n}|\rightarrow 0$ in $L^{q}(\R, \R)$ for any $q\in (2, \infty)$. 
Arguing as in Lemma \ref{vanishingF} we  get
\begin{align}\label{glimiti}
\lim_{n\rightarrow \infty}\int_{\R} g_{\e_{n}}(x, |u_{n}|^{2}) |u_{n}|^{2} \,dx=0= \lim_{n\rightarrow \infty}\int_{\R} G_{\e_{n}}(x, |u_{n}|^{2}) \, dx.
\end{align}
Taking into account $\langle J'_{\e_{n}}(u_{n}), u_{n}\rangle=0$ and \eqref{glimiti}, we can  infer that $\|u_{n}\|_{\e_{n}}\rightarrow 0$ as $n\rightarrow \infty$, which implies that  $J_{\e_{n}}(u_{n})\rightarrow 0$, that is a contradiction because $c_{V_{0}}>0$.

Set $v_{n}(x)=|u_{n}|(x+\tilde{y}_{n})$. Then $(v_{n})$ is bounded in $H^{1/2}_{V_{0}}(\R, \R)$, and we may assume that 
$v_{n}\rightharpoonup v\not\equiv 0$ in $H^{1/2}_{V_{0}}(\R, \R)$  as $n\rightarrow \infty$.
Fix $t_{n}>0$ such that $\tilde{v}_{n}=t_{n} v_{n}\in \mathcal{N}_{V_{0}}$. Using Lemma \ref{DI} and $u_{n}\in \mathcal{N}_{\e_{n}}$, we can see that 
$$
c_{V_{0}}\leq J_{V_{0}}(\tilde{v}_{n})\leq \max_{t\geq 0} J_{\e_{n}}(t u_{n})= J_{\e_{n}}(u_{n})= c_{V_{0}}+o_{n}(1)
$$
which implies that $J_{V_{0}}(\tilde{v}_{n})\rightarrow c_{V_{0}}$. In particular, $\tilde{v}_{n}\nrightarrow 0$ in $H^{1/2}_{V_{0}}(\R, \R)$.
Since $(v_{n})$ and $(\tilde{v}_{n})$ are bounded in $H^{1/2}_{V_{0}}(\R, \R)$ and $\tilde{v}_{n}\nrightarrow 0$  in $H^{1/2}_{V_{0}}(\R, \R)$, we deduce that $t_{n}\rightarrow t^{*}\geq 0$. 

Indeed $t^{*}>0$ since $\tilde{v}_{n}\nrightarrow 0$  in $H^{1/2}_{V_{0}}(\R, \R)$. From the uniqueness of the weak limit, we can deduce that $\tilde{v}_{n}\rightharpoonup \tilde{v}=t^{*}v\not\equiv 0$ in $H^{1/2}_{V_{0}}(\R, \R)$. 
This combined with Lemma \ref{FS} yields
\begin{equation}\label{elena}
\tilde{v}_{n}\rightarrow \tilde{v} \quad \mbox{ in } H^{1/2}_{V_{0}}(\R, \R).
\end{equation} 
Consequently, $v_{n}\rightarrow v$ in $H^{1/2}_{V_{0}}(\R, \R)$ as $n\rightarrow \infty$.

Now, we set $y_{n}=\e_{n}\tilde{y}_{n}$ and we show that $(y_{n})$ admits a subsequence, still denoted by $y_{n}$, such that $y_{n}\rightarrow y_{0}$ for some $y_{0}\in M$. Firstly, we prove that $(y_{n})$ is bounded. Assume by contradiction that, up to a subsequence, $|y_{n}|\rightarrow \infty$ as $n\rightarrow \infty$. Take $R>0$ such that $\Lambda \subset (-R, R)$. Since we may suppose that  $|y_{n}|>2R$, we have that for any $z\in (- \frac{R}{\e_{n}}, \frac{R}{\e_{n}})$ 
$$
|\e_{n}z+y_{n}|\geq |y_{n}|-|\e_{n}z|>R.
$$
Now, using $u_{n}\in \N_{\e_{n}}$ for all $n\in \mathbb{N}$, $(V_{1})$, Lemma \ref{DI} and the change of variable $x\mapsto z+\tilde{y}_{n}$ we observe that 
\begin{align}\label{pasq}
[v_{n}]^{2}+\int_{\R} V_{0} v_{n}^{2}\, dx &\leq \int_{\R} g(\e_{n} x+y_{n}, |v_{n}|^{2}) |v_{n}|^{2} \, dx \nonumber\\
&\leq \int_{-\frac{R}{\e_{n}}}^{\frac{R}{\e_{n}}} \tilde{f}(|v_{n}|^{2}) |v_{n}|^{2} \, dx+\int_{\R\setminus (-\frac{R}{\e_{n}}, \frac{R}{\e_{n}} )} f(|v_{n}|^{2}) |v_{n}|^{2} \, dx.
\end{align}
Then, recalling that $v_{n}\rightarrow v$ in $H^{1/2}_{V_{0}}(\R, \R)$ as $n\rightarrow \infty$ and $\tilde{f}(t)\leq \frac{V_{0}}{k}$, we can see that (\ref{pasq}) yields
$$
\min\left\{1, V_{0}\left(1-\frac{1}{k}\right) \right\} \left([v_{n}]^{2}+\int_{\R} |v_{n}|^{2}\, dx\right)=o_{n}(1),
$$
that is $v_{n}\rightarrow 0$ in $H^{1/2}_{V_{0}}(\R, \R)$ and this is a contradiction. Therefore, $(y_{n})$ is bounded and we may assume that $y_{n}\rightarrow y_{0}\in \R$. If $y_{0}\notin \overline{\Lambda}$, then we can argue as above to infer that $v_{n}\rightarrow 0$ in $H^{1/2}_{V_{0}}(\R, \R)$, which is impossible. Hence $y_{0}\in \overline{\Lambda}$. Let us note that if $V(y_{0})=V_{0}$, it follows from $(V_2)$ that $y_{0}\notin \partial \Lambda$. Therefore, it is enough to verify that $V(y_{0})=V_{0}$. Suppose by contradiction that $V(y_{0})>V_{0}$.
Hence, using (\ref{elena}), Fatou's Lemma, the invariance of  $\R$ by translations and Lemma \ref{DI} we get 
\begin{align*}
c_{V_{0}}=J_{V_{0}}(\tilde{v})&<\frac{1}{2}[\tilde{v}]^{2}+\frac{1}{2}\int_{\R} V(y_{0})\tilde{v}^{2} \, dx-\frac{1}{2}\int_{\R} F(|\tilde{v}|^{2})\, dx\\
&\leq \liminf_{n\rightarrow \infty}\left[\frac{1}{2}[\tilde{v}_{n}]^{2}+\frac{1}{2}\int_{\R} V(\e_{n}x+y_{n}) |\tilde{v}_{n}|^{2} \, dx-\frac{1}{2}\int_{\R} F(|\tilde{v}_{n}|^{2})\, dx  \right] \\
&\leq \liminf_{n\rightarrow \infty}\left[\frac{t_{n}^{2}}{2}[|u_{n}|]^{2}+\frac{t_{n}^{2}}{2}\int_{\R} V(\e_{n}z) |u_{n}|^{2} \, dz-\frac{1}{2}\int_{\R} F(|t_{n} u_{n}|^{2})\, dz  \right] \\
&\leq \liminf_{n\rightarrow \infty} J_{\e_{n}}(t_{n} u_{n}) \leq \liminf_{n\rightarrow \infty} J_{\e_{n}}(u_{n})= c_{V_{0}}.
\end{align*}
\end{proof}

Now, we consider the following subset $\widetilde{\N}_{\e}$ of $\N_{\e}$ defined as 
$$
\widetilde{\N}_{\e}=\left \{u\in \N_{\e}: J_{\e}(u)\leq c_{V_{0}}+h(\e)\right\}, 
$$
where $h:\R^{+}\rightarrow \R^{+}$ is such that $h(\e)\rightarrow 0$ as $\e \rightarrow 0$. 
Given $y\in M$, we can use Lemma \ref{lem3.4} to infer that $h(\e)=|J_{\e}(\Phi_{\e}(y))-c_{V_{0}}|\rightarrow 0$ as $\e \rightarrow 0$. Thus $\Phi_{\e}(y)\in \widetilde{\N}_{\e}$, and $\widetilde{\N}_{\e}\neq \emptyset$ for any $\e>0$. Moreover, proceeding as in Lemma $4.5$ in \cite{AD}, we have:
\begin{lem}\label{lem3.5}
For any $\delta>0$ we have
$$
\lim_{\e \rightarrow 0} \sup_{u\in \widetilde{\mathcal{N}}_{\e}} {\rm dist}(\beta_{\e}(u), M_{\delta})=0.
$$
\end{lem}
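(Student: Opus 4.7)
The plan is to argue by contradiction. Suppose the conclusion fails: there exist $\delta_{0}>0$, a sequence $\e_{n}\to 0$, and $u_{n}\in \widetilde{\N}_{\e_{n}}$ with
$\inf_{y\in M_{\delta}}|\beta_{\e_{n}}(u_{n})-y|\geq \delta_{0}$ for all $n$. The strategy is to invoke Lemma \ref{prop3.3} to produce a translation sequence $(\tilde y_{n})\subset \R$ along which $|u_{n}|$ converges, and to show that $\beta_{\e_{n}}(u_{n})$ is forced to concentrate at a point of $M$.

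The preliminary step is to verify the hypothesis $J_{\e_{n}}(u_{n})\to c_{V_{0}}$ required by Lemma \ref{prop3.3}. The upper bound $J_{\e_{n}}(u_{n})\leq c_{V_{0}}+h(\e_{n})$ is built into the definition of $\widetilde{\N}_{\e_{n}}$. For the matching lower bound I would pick $t_{n}>0$ with $t_{n}|u_{n}|\in \N_{V_{0}}$. Using the diamagnetic inequality $[|u_{n}|]\leq [u_{n}]_{A_{\e_{n}}}$ (Lemma \ref{DI}), the bound $V_{\e_{n}}\geq V_{0}$, and the pointwise inequality $G_{\e_{n}}(x,s)\leq F(s)$ (which follows from $g(x,\cdot)\in\{f,\tilde f\}$ and $\tilde f\leq \hat f\leq f$ on $[0,\infty)$), I get
$$
c_{V_{0}}\leq J_{V_{0}}(t_{n}|u_{n}|)\leq J_{\e_{n}}(t_{n}u_{n})\leq \max_{t\geq 0}J_{\e_{n}}(tu_{n})=J_{\e_{n}}(u_{n}),
$$
the last equality because $u_{n}\in \N_{\e_{n}}$. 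Squeezing yields $J_{\e_{n}}(u_{n})\to c_{V_{0}}$.

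Lemma \ref{prop3.3} now supplies $(\tilde y_{n})\subset \R$ such that, up to a subsequence, $v_{n}(x):=|u_{n}|(x+\tilde y_{n})\to v$ strongly in $H^{1/2}(\R,\R)$ with $v\not\equiv 0$, and $y_{n}:=\e_{n}\tilde y_{n}\to y_{0}\in M$. Performing the change of variables $z=x-\tilde y_{n}$ in the definition of $\beta_{\e_{n}}$ gives
$$
\beta_{\e_{n}}(u_{n})-y_{n}=\frac{\int_{\R}[\varUpsilon(\e_{n}z+y_{n})-y_{n}]\,|v_{n}(z)|^{2}\,dz}{\int_{\R}|v_{n}(z)|^{2}\,dz}.
$$
Since $M_{\delta}\subset (-\rho,\rho)$, the point $y_{0}$ lies in the interior of $(-\rho,\rho)$, so $\varUpsilon(y_{0})=y_{0}$ and $\varUpsilon(\e_{n}z+y_{n})-y_{n}\to 0$ pointwise, uniformly bounded by $2\rho$. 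Splitting the numerator as $\int[\varUpsilon(\e_{n}z+y_{n})-y_{n}](|v_{n}|^{2}-|v|^{2})\,dz+\int[\varUpsilon(\e_{n}z+y_{n})-y_{n}]|v|^{2}\,dz$, the first piece tends to zero by the strong $L^{2}$ convergence $v_{n}\to v$ (and the uniform bound on the integrand factor), while the second vanishes by the Dominated Convergence Theorem. The denominator converges to $\|v\|_{L^{2}(\R)}^{2}>0$, hence $\beta_{\e_{n}}(u_{n})\to y_{0}\in M\subset M_{\delta}$, contradicting the standing assumption.

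The main obstacle is the preliminary step, namely confirming $J_{\e_{n}}(u_{n})\to c_{V_{0}}$; this is where the penalized structure and the magnetic character of the problem could conceivably obstruct a direct comparison with the autonomous problem, and the ingredients making the comparison go through are precisely the diamagnetic inequality together with the bound $G_{\e}\leq F$. Once Lemma \ref{prop3.3} is applicable, the remainder is a routine Dominated Convergence argument exploiting that $\varUpsilon$ is the identity near $M$.
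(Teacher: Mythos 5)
Your argument is correct and is essentially the proof the paper intends: the paper only cites Lemma 4.5 of \cite{AD}, whose argument is exactly your scheme --- squeeze $J_{\e_n}(u_n)\to c_{V_0}$ using the definition of $\widetilde{\N}_{\e}$ together with the diamagnetic inequality, $V_{\e}\geq V_0$, $G_{\e}\leq F$ and $\max_{t\geq 0}J_{\e_n}(tu_n)=J_{\e_n}(u_n)$ (the same chain used in the proof of Lemma \ref{prop3.3}), then apply Lemma \ref{prop3.3} and translate in the barycenter integral, using that $\varUpsilon$ is the identity near $y_0\in M$. No gaps worth flagging.
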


\noindent
We end this section giving the proof of a multiplicity result for \eqref{MPe}.
\begin{thm}\label{multiple}
For any $\delta>0$ such that $M_{\delta}\subset \Lambda$, there exists $\tilde{\e}_{\delta}>0$ such that, for any $\e\in (0, \tilde{\e}_{\delta})$, problem \eqref{MPe} has at least $cat_{M_{\delta}}(M)$ nontrivial solutions.
\end{thm}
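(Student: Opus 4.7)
The plan is to apply Ljusternik--Schnirelmann category theory to $J_\e$ restricted to the Nehari manifold $\N_\e$, using the maps $\Phi_\e \colon M \to \N_\e$ and $\beta_\e \colon \N_\e \to \R$ already constructed, together with the sublevel set $\widetilde{\N}_\e$. By Corollary \ref{cor}, critical points of $J_\e$ constrained to $\N_\e$ are genuine critical points of $J_\e$, so it suffices to count critical points on the manifold.

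First I would fix $\delta>0$ with $M_\delta \subset \Lambda$ and choose $\rho$ so that $M_\delta \subset (-\rho,\rho)$. Using Lemma \ref{lem3.4}, Lemma \ref{lem3.5N} and Lemma \ref{lem3.5}, I would pick $\tilde{\e}_\delta>0$ small enough so that for all $\e\in(0,\tilde{\e}_\delta)$ the following hold simultaneously: $\Phi_\e(M)\subset \widetilde{\N}_\e$; the set $\widetilde{\N}_\e$ is nonempty and contained in the sublevel $\{J_\e\le c_{V_0}+h(\e)\}$ with $c_{V_0}+h(\e)<\min\{1,V_0\}(\tfrac{1}{2}-\tfrac{1}{\theta})$ (which is legitimate by Remark \ref{Rodica} and Lemma \ref{AMlem1}); and $\beta_\e(\widetilde{\N}_\e)\subset M_\delta$. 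In this range, Proposition \ref{propPSc} guarantees that $J_\e|_{\N_\e}$ satisfies $(PS)_c$ at every level $c\le c_{V_0}+h(\e)$.

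The key step is to construct a homotopy showing $\mathrm{cat}_{\widetilde{\N}_\e}(\widetilde{\N}_\e)\geq \mathrm{cat}_{M_\delta}(M)$. Concretely, I would consider the composition $\beta_\e\circ \Phi_\e \colon M \to M_\delta$ and verify, via Lemma \ref{lem3.5N}, that for $\e$ small this map is homotopic to the inclusion $\iota \colon M \hookrightarrow M_\delta$ through the straight-line homotopy
\begin{equation*}
H(t,y)=(1-t)\,y + t\,\beta_\e(\Phi_\e(y)),\qquad (t,y)\in[0,1]\times M,
\end{equation*}
which, by the uniform convergence $\beta_\e(\Phi_\e(y))\to y$, stays inside $M_\delta$ for $\e$ sufficiently small. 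A standard category argument (see e.g.\ Cingolani--Lazzo or Benci--Cerami) then yields
\begin{equation*}
\mathrm{cat}_{\widetilde{\N}_\e}(\widetilde{\N}_\e)\geq \mathrm{cat}_{M_\delta}(M).
\end{equation*}

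Finally, I would apply the abstract Ljusternik--Schnirelmann multiplicity theorem to the $C^1$ functional $J_\e$ constrained to the Nehari $C^1$-manifold $\N_\e$: since $\widetilde{\N}_\e$ is a closed $J_\e$-sublevel set on which the $(PS)$ condition holds and whose category is at least $\mathrm{cat}_{M_\delta}(M)$, $J_\e$ has at least $\mathrm{cat}_{M_\delta}(M)$ critical points in $\widetilde{\N}_\e$, hence at least that many nontrivial solutions of \eqref{MPe}. The main obstacle I expect is the uniform-in-$\e$ control needed to push the three maps ($\Phi_\e$ landing in $\widetilde{\N}_\e$, $\beta_\e$ landing in $M_\delta$, and the straight-line homotopy remaining in $M_\delta$) into a common range of $\e$; this is precisely what Lemmas \ref{lem3.4}, \ref{lem3.5N} and \ref{lem3.5} were designed to deliver, so the argument reduces to a careful bookkeeping of their uniform convergences and a choice of a suitable decreasing function $h(\e)$.
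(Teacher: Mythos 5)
Your proposal is correct and follows essentially the same route as the paper: the diagram $M \stackrel{\Phi_{\e}}{\rightarrow} \widetilde{\mathcal{N}}_{\e} \stackrel{\beta_{\e}}{\rightarrow} M_{\delta}$ from Lemmas \ref{lem3.4}, \ref{lem3.5N}, \ref{lem3.5}, the Cingolani--Lazzo homotopy argument giving $cat_{\widetilde{\mathcal{N}}_{\e}}(\widetilde{\mathcal{N}}_{\e})\geq cat_{M_{\delta}}(M)$, Ljusternik--Schnirelmann theory with the $(PS)$ condition of Proposition \ref{propPSc}, and Corollary \ref{cor} to pass from constrained to free critical points. Your explicit check that the levels stay below the Palais--Smale threshold via Remark \ref{Rodica} and Lemma \ref{AMlem1} is a detail the paper leaves implicit, but it is consistent with its argument.
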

\begin{proof}
Given  $\delta>0$ such that $M_{\delta}\subset \Lambda$, we can use Lemma \ref{lem3.4}, Lemma \ref{lem3.5N}, Lemma \ref{lem3.5} and argue as in \cite{CL} to deduce the existence of $\tilde{\e}_{\delta}>0$ such that, for any $\e\in (0, \e_{\delta})$, the following diagram
$$
M \stackrel{\Phi_{\e}}\rightarrow \widetilde{\mathcal{N}}_{\e} \stackrel{\beta_{\e}}\rightarrow M_{\delta}
$$
is well defined and $\beta_{\e}\circ \Phi_{\e}$ is homotopically equivalent to the embedding  $\iota: M\rightarrow M_{\delta}$. Thus $cat_{\widetilde{\mathcal{N}}_{\e}}(\widetilde{\mathcal{N}}_{\e})\geq cat_{M_{\delta}}(M)$.
It follows from Proposition \ref{propPSc} and standard Ljusternik-Schnirelmann theory \cite{W} that $J_{\e}$  possesses at least $cat_{\widetilde{\mathcal{N}}_{\e}}(\widetilde{\mathcal{N}}_{\e})$  critical points on $\mathcal{N}_{\e}$. Applying Corollary \ref{cor},  we can deduce that  \eqref{MPe} has at least $cat_{M_{\delta}}(M)$ nontrivial solutions.
\end{proof}

\section{Proof of Theorem \ref{thm1}}

\noindent
This last section is devoted to the proof of the main theorem of this work. Indeed, we aim to prove that the solutions obtained in Theorem \ref{multiple} verify $|u_{\e}(x)|\leq t_{a}$ for any $x\in \R\setminus\Lambda_{\e}$ and $\e$ small.

Let us start proving the following lemma which will play a fundamental role in the study of the behavior of the maximum points of the solutions. To do this, we use a Moser iteration argument \cite{Moser} and a sort of Kato's inequality \cite{Kato} for the modulus of solutions to \eqref{MPe}.
\begin{lem}\label{moser} 
Let $\e_{n}\rightarrow 0$ and $u_{n}\in H^{1/2}_{\e_{n}}$ be a solution to \eqref{MPe} such that 
$$
m:=\limsup_{n\ri \infty}\|u_{n}\|^{2}_{\e_{n}}<1.
$$ 
Then $v_{n}=|u_{n}|(\cdot+\tilde{y}_{n})$ satisfies $v_{n}\in L^{\infty}(\R,\R)$ and there exists $C>0$ such that 
$$
\|v_{n}\|_{L^{\infty}(\R)}\leq C \quad \mbox{ for all } n\in \mathbb{N},
$$
where $\tilde{y}_{n}$ is given by Lemma \ref{prop3.3}.
Moreover
$$
\lim_{|x|\rightarrow \infty} v_{n}(x)=0 \mbox{ uniformly in } n\in \mathbb{N}.
$$
\end{lem}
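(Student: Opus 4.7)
The plan is to combine a Kato-type inequality (which converts the complex-valued magnetic equation into a scalar subsolution inequality for $|u_n|$) with a Moser iteration carefully adapted to the exponential critical setting, exploiting the hypothesis $m<1$ to keep the Moser--Trudinger constants under control.

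First I would establish the scalar inequality. Starting from the weak formulation of $(-\Delta)^{1/2}_{A_{\e_n}} u_n + V_{\e_n} u_n = g_{\e_n}(x,|u_n|^2) u_n$ and using the pointwise diamagnetic inequality of Lemma \ref{DI} together with the identity $\Re\bigl((u_n(x)-e^{\imath A_{\e_n}(\frac{x+y}{2})\cdot(x-y)}u_n(y))\overline{u_n(x)/|u_n(x)|}\bigr)\geq |u_n(x)|-|u_n(y)|$, one shows (via a standard regularization of the map $u\mapsto|u|$, as in Kato's inequality) that $|u_n|$ is a weak subsolution
\begin{equation*}
(-\Delta)^{1/2}|u_n|+V_{\e_n}|u_n|\leq g_{\e_n}(x,|u_n|^2)|u_n|\quad\text{in }\R.
\end{equation*}
Translating, $v_n=|u_n|(\cdot+\tilde y_n)$ satisfies the analogous inequality with potential $V_{\e_n}(\cdot+\tilde y_n)\geq V_0$ and nonlinearity $\tilde g_n(x,t):=g(\e_n(x+\tilde y_n),t)$.

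Next I would run a Moser iteration on $v_n$. For $L>0$, $\beta\geq 1$, test against the truncated power $\varphi_{n,L}=v_n\,v_{n,L}^{2(\beta-1)}$ with $v_{n,L}=\min(v_n,L)$; the standard algebraic inequality for the Gagliardo form gives
\begin{equation*}
\tfrac{1}{\beta}[w_{n,L}]^2+\int_\R V_0\, w_{n,L}^2\,dx\leq \int_\R \tilde g_n(x,v_n^2)\,v_n^2\, v_{n,L}^{2(\beta-1)}\,dx,\qquad w_{n,L}:=v_n\,v_{n,L}^{\beta-1}.
\end{equation*}
Using $(g_2)$ with a parameter $\alpha$ slightly above $\alpha_0$, splitting the right-hand side by H\"older with an exponent $r>1$ close to $1$, and applying Lemma \ref{lem2.2ADOM} (which is available thanks to $m<1$) yields an estimate of the form $\|w_{n,L}\|_{L^{2q}}^2\leq C\bigl(1+\|v_n\|_{L^{r'\beta\sigma}}^{2\beta}\bigr)$ for some $q>1$ and $\sigma>1$ independent of $\beta$. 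Sending $L\to\infty$ and iterating over $\beta_k=q^{k-1}$ in the customary bootstrap, the sequence of $L^{p_k}$-norms is controlled uniformly in $n$, which produces a uniform $L^\infty$-bound $\|v_n\|_{L^\infty(\R)}\leq C$.

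Finally I would deduce the uniform vanishing at infinity. By Lemma \ref{prop3.3} we have $v_n\to v$ strongly in $H^{1/2}(\R,\R)$, hence in every $L^q(\R)$ with $q\in[2,\infty)$, so for any $\eta>0$ there exist $R_0,n_0$ with $\|v_n\|_{L^{q}(\R\setminus(-R_0,R_0))}<\eta$ for $n\geq n_0$. A localized Moser iteration on unit intervals centered at points $x_0$ with $|x_0|>R_0+1$ (using the already-proved uniform $L^\infty$ bound to freeze the exponential term) yields $\|v_n\|_{L^\infty(B_1(x_0))}\to 0$ as $|x_0|\to\infty$ uniformly in $n$. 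Alternatively, once $v_n$ is small outside a big ball, $(f_1)$ gives $\tilde g_n(x,v_n^2)\leq V_0/2$ there, so $v_n$ satisfies $(-\Delta)^{1/2}v_n+(V_0/2)v_n\leq 0$ outside a large interval, and a comparison argument with the fundamental solution $H$ of $(-\Delta)^{1/2}+V_0/2$ (as used in Lemma \ref{AMlem1} with its $|x|^{-2}$ decay) concludes both the uniform vanishing at infinity and prepares the pointwise decay estimate announced in Theorem \ref{thm1}.

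The main obstacle will be the Moser iteration step: the presence of the magnetic phase forbids a direct test with $v_n^{2\beta-1}$, so the reduction to the scalar inequality via Kato must be done rigorously (justifying differentiation of the modulus through an approximation $\sqrt{|u_n|^2+\delta}$, then passing to the limit $\delta\to 0$), and the exponential critical term must be absorbed using exactly the sharp integrability window opened by the strict inequality $m<1$ in Lemmas \ref{lem2.1ADOM}--\ref{lem2.2ADOM}; losing this strict inequality would destroy the iteration.
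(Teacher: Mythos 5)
Your proposal is correct in substance and uses the same two key ingredients as the paper -- a Kato-type inequality reducing to a scalar subsolution and a Moser iteration whose exponential term is absorbed through Lemma \ref{lem2.2ADOM} thanks to the strict bound $m<1$ -- but it arranges them in the opposite order, and this is a genuine structural difference. The paper runs the Moser iteration \emph{directly on the magnetic equation}: it tests \eqref{MPe} with $u_nu_{L,n}^{2(\beta-1)}$ and uses the pointwise inequality behind Lemma \ref{DI} to dominate the magnetic quadratic form from below by the scalar Gagliardo form of $\Gamma(|u_n|)$, obtaining the uniform $L^{\infty}$ bound first; only afterwards does it prove the Kato inequality, and there the already-available $L^{\infty}$ bound is exactly what it uses to check that $\tfrac{u_n}{u_{\delta,n}}\varphi$ is an admissible test function. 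In your order you must justify the Kato step without that bound; this is doable (use $|u_n|/u_{\delta,n}\le 1$, $|u_n(y)|/u_{\delta,n}(x)\le |u_n(y)|/\delta$, and estimates of the type in Lemma \ref{lemPSI} to show $\tfrac{u_n}{u_{\delta,n}}\varphi\in\h$), and you must also extend the resulting subsolution inequality, proved for nonnegative $\varphi\in C^{\infty}_{c}$, to the unbounded, non-compactly supported test functions $v_nv_{n,L}^{2(\beta-1)}$ by a density/truncation argument -- two technical points the paper's ordering avoids, in exchange for your cleaner purely scalar iteration. For the uniform decay the routes also differ: the paper compares $v_n$ with $z_n=\mathcal{K}*g_n$ (Bessel kernel) and quotes the uniform vanishing of $z_n$, while your primary argument is a localized Moser iteration; that works, but note that local estimates for $(-\Delta)^{1/2}$ carry a nonlocal tail term, which you must control using the global $L^{\infty}$ bound together with the $L^{2}$-smallness of the tails of $v_n$. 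Finally, be careful with your ``alternative'' decay argument: outside $\Lambda_{\e_n}$ one has $g\le V_0/k$ for free, but on the translated set $\Lambda_{\e_n}-\tilde y_n$, whose length grows like $1/\e_n$, one has $g=f$ and the bound $f(v_n^2)\le V_0/2$ requires precisely the uniform pointwise smallness of $v_n$ you are trying to prove; so that comparison with the fundamental solution $H$ can only close the argument after the localized iteration (or the paper's Bessel-kernel comparison) has been carried out, and by itself it is circular.
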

\begin{proof}
For any $L>0$ we define $u_{L,n}:=\min\{|u_{n}|, L\}\geq 0$ and we set $v_{L, n}=u_{L,n}^{2(\beta-1)}u_{n}$ where $\beta>1$ will be chosen later.
Taking $v_{L, n}$ as test function in (\ref{MPe}) we can see that
\begin{align}\label{conto1FF}
&\Re\left(\iint_{\R^{2}} \frac{(u_{n}(x)-u_{n}(y)e^{\imath A(\frac{x+y}{2})\cdot (x-y)})}{|x-y|^{2}} \overline{(u_{n}u_{L,n}^{2(\beta-1)}(x)-u_{n}u_{L,n}^{2(\beta-1)}(y)e^{\imath A(\frac{x+y}{2})\cdot (x-y)})} \, dx dy\right)   \nonumber \\
&=\int_{\R} g_{\e_{n}}(x, |u_{n}|^{2}) |u_{n}|^{2}u_{L,n}^{2(\beta-1)}  \,dx-\int_{\R} V_{\e_{n}}(x) |u_{n}|^{2} u_{L,n}^{2(\beta-1)} \, dx.
\end{align}
Now, we note that
\begin{align*}
&\Re\left[(u_{n}(x)-u_{n}(y)e^{\imath A(\frac{x+y}{2})\cdot (x-y)})\overline{(u_{n}u_{L,n}^{2(\beta-1)}(x)-u_{n}u_{L,n}^{2(\beta-1)}(y)e^{\imath A(\frac{x+y}{2})\cdot (x-y)})}\right] \\
&=\Re\Bigl[|u_{n}(x)|^{2}u_{L,n}^{2(\beta-1)}(x)-u_{n}(x)\overline{u_{n}(y)} u_{L,n}^{2(\beta-1)}(y)e^{-\imath A(\frac{x+y}{2})\cdot (x-y)}-u_{n}(y)\overline{u_{n}(x)} u_{L,n}^{2(\beta-1)}(x) e^{\imath A(\frac{x+y}{2})\cdot (x-y)} \\
&+|u_{n}(y)|^{2}u_{L,n}^{2(\beta-1)}(y) \Bigr] \\
&\geq (|u_{n}(x)|^{2}u_{L,n}^{2(\beta-1)}(x)-|u_{n}(x)||u_{n}(y)|u_{L,n}^{2(\beta-1)}(y)-|u_{n}(y)||u_{n}(x)|u_{L,n}^{2(\beta-1)}(x)+|u_{n}(y)|^{2}u^{2(\beta-1)}_{L,n}(y) \\
&=(|u_{n}(x)|-|u_{n}(y)|)(|u_{n}(x)|u_{L,n}^{2(\beta-1)}(x)-|u_{n}(y)|u_{L,n}^{2(\beta-1)}(y)),
\end{align*}
which yields
\begin{align}\label{realeF}
&\Re\left(\iint_{\R^{2}} \frac{(u_{n}(x)-u_{n}(y)e^{\imath A(\frac{x+y}{2})\cdot (x-y)})}{|x-y|^{2}} \overline{(u_{n}u_{L,n}^{2(\beta-1)}(x)-u_{n}u_{L,n}^{2(\beta-1)}(y)e^{\imath A(\frac{x+y}{2})\cdot (x-y)})} \, dx dy\right) \nonumber\\
&\geq \iint_{\R^{2}} \frac{(|u_{n}(x)|-|u_{n}(y)|)}{|x-y|^{2}} (|u_{n}(x)|u_{L,n}^{2(\beta-1)}(x)-|u_{n}(y)|u_{L,n}^{2(\beta-1)}(y))\, dx dy.
\end{align}
On the other hand, by $(g_1)$ and $(g_2)$, for any $\xi>0$ there exists $C_{\xi}>0$ such that
\begin{equation}\label{SS2}
g_{\e_{n}}(x, |u_{n}|^{2})|u_{n}|^{2}\leq \xi |u_{n}|^{2}+C_{\xi}|u_{n}|^{2} B(u_{n}) \quad \mbox{ for all } n\in \mathbb{N}, 
\end{equation}
where $B(u_{n}):=(e^{\omega \tau |u_{n}|^{2}} -1)$. Arguing as in Lemma \ref{lem2.2ADOM}, we can see that $B(u_{n})\in L^{q}(\R)$ for some $q>1$ close to $1$, with $\tau, q>1$ are such that $\tau q m<1$,  and 
\begin{align}\label{ASJDE}
\|B(u_{n})\|_{L^{q}(\R)}\leq C \quad \mbox{ for any } n\in \mathbb{N}. 
\end{align}
For all $t\geq 0$, let us define
\begin{equation*}
\gamma(t)=\gamma_{L, \beta}(t)=t t_{L}^{2(\beta-1)}
\end{equation*}
where  $t_{L}=\min\{t, L\}$. 
Let us observe that, since $\gamma$ is an increasing function, then it holds
\begin{align*}
(a-b)(\gamma(a)- \gamma(b))\geq 0 \quad \mbox{ for any } a, b\in \R.
\end{align*}
Let 
\begin{equation*}
\Lambda(t)=\frac{|t|^{2}}{2} \quad \mbox{ and } \quad \Gamma(t)=\int_{0}^{t} (\gamma'(\tau))^{\frac{1}{2}} d\tau. 
\end{equation*}
and note that
\begin{equation}\label{Gg}
\Lambda'(a-b)(\gamma(a)-\gamma(b))\geq |\Gamma(a)-\Gamma(b)|^{2} \quad \mbox{ for any } a, b\in\R. 
\end{equation}
In fact, for any $a, b\in \R$ such that $a<b$, the Jensen inequality gives
\begin{align*}
\Lambda'(a-b)(\gamma(a)-\gamma(b))&=(a-b)\int_{b}^{a} \gamma'(t)dt\\
&=(a-b)\int_{b}^{a} (\Gamma'(t))^{2}dt \\
&\geq \left(\int_{b}^{a} \Gamma'(t) dt\right)^{2} \\
&=(\Gamma(a)-\Gamma(b))^{2}.
\end{align*}
Then, \eqref{Gg} yields
\begin{align}\label{Gg1}
|\Gamma(|u_{n}(x)|)- \Gamma(|u_{n}(y)|)|^{2} \leq (|u_{n}(x)|- |u_{n}(y)|)((|u_{n}|u_{L,n}^{2(\beta-1)})(x)- (|u_{n}|u_{L,n}^{2(\beta-1)})(y)). 
\end{align}
In the light of \eqref{realeF} and \eqref{Gg1}, we get
\begin{align}\label{conto1FFF}
\Re\left(\iint_{\R^{2}} \frac{(u_{n}(x)-u_{n}(y)e^{\imath A(\frac{x+y}{2})\cdot (x-y)})}{|x-y|^{2}} \overline{(u_{n}u_{L,n}^{2(\beta-1)}(x)-u_{n}u_{L,n}^{2(\beta-1)}(y)e^{\imath A(\frac{x+y}{2})\cdot (x-y)})} \, dx dy\right) 
\geq [\Gamma(|u_{n}|)]^{2}.
\end{align}
Noting that
$$
\Gamma(|u_{n}|)\geq \frac{1}{\beta} |u_{n}| u_{L,n}^{\beta-1},
$$ 
and recalling that $H^{\frac{1}{2}}(\R, \R)\subset L^{r}(\R, \R)$ for all $r\in [2, \infty)$, we can see that 
\begin{equation}\label{SS1}
[\Gamma(|u_{n}|)]^{2}\geq C \|\Gamma(|u_{n}|)\|^{2}_{L^{\gamma}(\R)}\geq \frac{1}{\beta^{2}} C \||u_{n}| u_{L,n}^{\beta-1}\|^{2}_{L^{\gamma}(\R)}, 
\end{equation}
where $\gamma>2q'$.  

Putting together \eqref{conto1FF}, \eqref{conto1FFF} and \eqref{SS1} we can infer that
\begin{align}\label{BMS}
\left(\frac{1}{\beta}\right)^{2} C\||u_{n}| u_{L,n}^{\beta-1}\|^{2}_{L^{\gamma}(\R)}+\int_{\R} V_{\e_{n}}(x) |u_{n}|^{2}u_{L,n}^{2(\beta-1)} dx\leq \int_{\R} g_{\e_{n}}(x, |u_{n}|^{2}) |u_{n}|^{2} u_{L,n}^{2(\beta-1)} dx.
\end{align}
Taking $\xi\in (0, V_{0})$ and using \eqref{SS2} and \eqref{BMS} we have
\begin{equation}\label{simo1}
\|w_{L,n}\|_{L^{\gamma}(\R)}^{2}\leq C \beta^{2} \int_{\R} B(u_{n}) |u_{n}|^{2}u_{L,n}^{2(\beta-1)}\, dx = C \beta^{2} \int_{\R} B(u_{n}) w_{L, n}^{2}\, dx ,
\end{equation}
where $w_{L,n}:=|u_{n}| u_{L,n}^{\beta-1}$.

From \eqref{ASJDE} and H\"older inequality it follows that
\begin{align*}
\|w_{L,n}\|_{L^{\gamma}(\R)}^{2}\leq C \beta^{2}  \left( \int_{\R} B(u_{n})^{q}\, dx \right)^{\frac{1}{q}} \left( \int_{\R} w_{L, n}^{2q'}\, dx \right)^{\frac{1}{q'}} \leq C \beta^{2} \|w_{L, n}\|_{L^{2q'}(\R)}^{2}, 
\end{align*}
and letting the limit as $L\rightarrow \infty$ we find 
\begin{align*}
\|u_{n}\|_{L^{\beta \gamma}(\R)} \leq C^{\frac{1}{2\beta}}\beta^{\frac{1}{\beta}} \|u_{n}\|_{L^{2q'\beta}(\R)}. 
\end{align*}
Since $\gamma >2q'$, we can use an iteration argument to deduce that for all $m\in \mathbb{N}$
\begin{align*}
\|u_{n}\|_{L^{\tau k^{(m+1)}}(\R)}\leq C^{\sum_{i=1}^{m} k^{-i}} k^{\sum_{i=1}^{m} ik^{-i}} \|u_{n}\|_{L^{\gamma}(\R)} 
\end{align*}
where $k=\frac{\gamma}{2q'}$ and $\tau= 2q'$, which together with the boundedness of $(u_{n})$ in $\h$ implies that 
\begin{equation}\label{UBu}
\|u_{n}\|_{L^{\infty}(\R)}\leq K \quad \mbox{ for all } n\in \mathbb{N}.
\end{equation}

By interpolation, we can also see that $(|u_{n}|)$ strongly converges in $L^{r}(\R, \R)$ for all $r\in (2, \infty)$. In view of the growth assumptions on $g$, also $g_{\e_{n}}(x, |u_{n}|^{2})|u_{n}|$ strongly converges  in the same Lebesgue spaces. 
Now, our claim is to prove that $|u_{n}|$ is a weak subsolution to 
\begin{equation}\label{Kato0}
\left\{
\begin{array}{ll}
(-\Delta)^{1/2}v+V_{\e_{n}}(x) v=g_{\e_{n}}(x, v^{2})v &\mbox{ in } \R \\
v\geq 0 \quad \mbox{ in } \R.
\end{array}
\right.
\end{equation}
Fix $\varphi\in C^{\infty}_{c}(\R, \R)$ such that $\varphi\geq 0$, and we take $\psi_{\delta, n}=\frac{u_{n}}{u_{\delta, n}}\varphi$ as test function in \eqref{MPe}, where we set 
$$
u_{\delta,n}=\sqrt{|u_{n}|^{2}+\delta^{2}} \mbox{ for } \delta>0.
$$ 
We show that $\psi_{\delta, n}\in H^{1/2}_{\e_{n}}$ for all $\delta>0$ and $n\in \mathbb{N}$. Indeed 
$$
\int_{\R} V_{\e_{n}}(x) |\psi_{\delta,n}|^{2} dx\leq \int_{\supp(\varphi)} V_{\e_{n}}(x)\varphi^{2} dx<\infty.
$$  
On the other hand, we can observe
\begin{align*}
\psi_{\delta,n}(x)-\psi_{\delta,n}(y)e^{\imath A_{\e}(\frac{x+y}{2})\cdot (x-y)}&=\left(\frac{u_{n}(x)}{u_{\delta,n}(x)}\right)\varphi(x)-\left(\frac{u_{n}(y)}{u_{\delta,n}(y)}\right)\varphi(y)e^{\imath A_{\e}(\frac{x+y}{2})\cdot (x-y)}\\
&=\left[\left(\frac{u_{n}(x)}{u_{\delta,n}(x)}\right)-\left(\frac{u_{n}(y)}{u_{\delta,n}(x)}\right)e^{\imath A_{\e}(\frac{x+y}{2})\cdot (x-y)}\right]\varphi(x) \\
&+\left[\varphi(x)-\varphi(y)\right] \left(\frac{u_{n}(y)}{u_{\delta,n}(x)}\right) e^{\imath A_{\e}(\frac{x+y}{2})\cdot (x-y)} \\
&+\left(\frac{u_{n}(y)}{u_{\delta,n}(x)}-\frac{u_{n}(y)}{u_{\delta,n}(y)}\right)\varphi(y) e^{\imath A_{\e}(\frac{x+y}{2})\cdot (x-y)}
\end{align*}
which implies that
\begin{align*}
&\left|\psi_{\delta,n}(x)-\psi_{\delta,n}(y)e^{\imath A_{\e}(\frac{x+y}{2})\cdot (x-y)} \right|^{2} \\
&\leq \frac{4}{\delta^{2}} \left|u_{n}(x)-u_{n}(y)e^{\imath A_{\e}(\frac{x+y}{2})\cdot (x-y)} \right|^{2}\|\varphi\|^{2}_{L^{\infty}(\R)} +\frac{4}{\delta^{2}}|\varphi(x)-\varphi(y)|^{2} \|u_{n}\|^{2}_{L^{\infty}(\R)} \\
&+\frac{4}{\delta^{4}} \|u_{n}\|^{2}_{L^{\infty}(\R)} \|\varphi\|^{2}_{L^{\infty}(\R)} |u_{\delta,n}(y)-u_{\delta,n}(x)|^{2} \\
&\leq \frac{4}{\delta^{2}} \left|u_{n}(x)-u_{n}(y)e^{\imath A_{\e}(\frac{x+y}{2})\cdot (x-y)} \right|^{2}\|\varphi\|^{2}_{L^{\infty}(\R)} +\frac{4K^{2}}{\delta^{2}}|\varphi(x)-\varphi(y)|^{2} \\
&+\frac{4K^{2}}{\delta^{4}} \|\varphi\|^{2}_{L^{\infty}(\R)} ||u_{n}(y)|-|u_{n}(x)||^{2}. 
\end{align*}
In the above inequality we used we used the following facts:
$$
|z+w+k|^{2}\leq 4(|z|^{2}+|w|^{2}+|k|^{2}) \quad \mbox{ for all } z,w,k\in \C,
$$ 
$|e^{\imath t}|=1$ for all $t\in \R$, $u_{\delta,n}\geq \delta$, $|\frac{u_{n}}{u_{\delta,n}}|\leq 1$, \eqref{UBu} and 
$$
|\sqrt{|z|^{2}+\delta^{2}}-\sqrt{|w|^{2}+\delta^{2}}|\leq ||z|-|w|| \quad \mbox{ for all } z, w\in \C.
$$
Since $u_{n}\in H^{1/2}_{\e_{n}}$, $|u_{n}|\in H^{1/2}(\R, \R)$ (by Lemma \ref{DI} and $(V_1)$) and $\varphi\in C^{\infty}_{c}(\R, \R)$, we deduce that $\psi_{\delta,n}\in H^{1/2}_{\e_{n}}$.
Accordingly,
\begin{align}\label{Kato1}
&\Re\left[\iint_{\R^{2}} \frac{(u_{n}(x)-u_{n}(y)e^{\imath A_{\e}(\frac{x+y}{2})\cdot (x-y)})}{|x-y|^{2}} \left(\frac{\overline{u_{n}(x)}}{u_{\delta,n}(x)}\varphi(x)-\frac{\overline{u_{n}(y)}}{u_{\delta,n}(y)}\varphi(y)e^{-\imath A_{\e}(\frac{x+y}{2})\cdot (x-y)}  \right) dx dy\right] \nonumber\\
&+\int_{\R} V_{\e_{n}}(x)\frac{|u_{n}|^{2}}{u_{\delta,n}}\varphi \, dx=\int_{\R} g_{\e_{n}}(x, |u_{n}|^{2})\frac{|u_{n}|^{2}}{u_{\delta,n}}\varphi \,dx.
\end{align}
Since $\Re(z)\leq |z|$ for all $z\in \C$ and  $|e^{\imath t}|=1$ for all $t\in \R$, we get
\begin{align}\label{alves1}
&\Re\left[(u_{n}(x)-u_{n}(y)e^{\imath A_{\e}(\frac{x+y}{2})\cdot (x-y)}) \left(\frac{\overline{u_{n}(x)}}{u_{\delta,n}(x)}\varphi(x)-\frac{\overline{u_{n}(y)}}{u_{\delta,n}(y)}\varphi(y)e^{-\imath A_{\e}(\frac{x+y}{2})\cdot (x-y)}  \right)\right] \nonumber\\
&=\Re\left[\frac{|u_{n}(x)|^{2}}{u_{\delta,n}(x)}\varphi(x)+\frac{|u_{n}(y)|^{2}}{u_{\delta,n}(y)}\varphi(y)-\frac{u_{n}(x)\overline{u_{n}(y)}}{u_{\delta,n}(y)}\varphi(y)e^{-\imath A_{\e}(\frac{x+y}{2})\cdot (x-y)} -\frac{u_{n}(y)\overline{u_{n}(x)}}{u_{\delta,n}(x)}\varphi(x)e^{\imath A_{\e}(\frac{x+y}{2})\cdot (x-y)}\right] \nonumber \\
&\geq \left[\frac{|u_{n}(x)|^{2}}{u_{\delta,n}(x)}\varphi(x)+\frac{|u_{n}(y)|^{2}}{u_{\delta,n}(y)}\varphi(y)-|u_{n}(x)|\frac{|u_{n}(y)|}{u_{\delta,n}(y)}\varphi(y)-|u_{n}(y)|\frac{|u_{n}(x)|}{u_{\delta,n}(x)}\varphi(x) \right].
\end{align}
Let us note that
\begin{align}\label{alves2}
&\frac{|u_{n}(x)|^{2}}{u_{\delta,n}(x)}\varphi(x)+\frac{|u_{n}(y)|^{2}}{u_{\delta,n}(y)}\varphi(y)-|u_{n}(x)|\frac{|u_{n}(y)|}{u_{\delta,n}(y)}\varphi(y)-|u_{n}(y)|\frac{|u_{n}(x)|}{u_{\delta,n}(x)}\varphi(x) \nonumber\\
&=  \frac{|u_{n}(x)|}{u_{\delta,n}(x)}(|u_{n}(x)|-|u_{n}(y)|)\varphi(x)-\frac{|u_{n}(y)|}{u_{\delta,n}(y)}(|u_{n}(x)|-|u_{n}(y)|)\varphi(y) \nonumber\\
&=\left[\frac{|u_{n}(x)|}{u_{\delta,n}(x)}(|u_{n}(x)|-|u_{n}(y)|)\varphi(x)-\frac{|u_{n}(x)|}{u_{\delta,n}(x)}(|u_{n}(x)|-|u_{n}(y)|)\varphi(y)\right] \nonumber\\
&+\left(\frac{|u_{n}(x)|}{u_{\delta,n}(x)}-\frac{|u_{n}(y)|}{u_{\delta,n}(y)} \right) (|u_{n}(x)|-|u_{n}(y)|)\varphi(y) \nonumber\\
&=\frac{|u_{n}(x)|}{u_{\delta,n}(x)}(|u_{n}(x)|-|u_{n}(y)|)(\varphi(x)-\varphi(y)) +\left(\frac{|u_{n}(x)|}{u_{\delta,n}(x)}-\frac{|u_{n}(y)|}{u_{\delta,n}(y)} \right) (|u_{n}(x)|-|u_{n}(y)|)\varphi(y) \nonumber\\
&\geq \frac{|u_{n}(x)|}{u_{\delta,n}(x)}(|u_{n}(x)|-|u_{n}(y)|)(\varphi(x)-\varphi(y)) 
\end{align}
where in the last inequality we used the fact that
$$
\left(\frac{|u_{n}(x)|}{u_{\delta,n}(x)}-\frac{|u_{n}(y)|}{u_{\delta,n}(y)} \right) (|u_{n}(x)|-|u_{n}(y)|)\varphi(y)\geq 0
$$
and that
$$
h(t)=\frac{t}{\sqrt{t^{2}+\delta^{2}}} \quad \mbox{ is increasing for } t\geq 0 \mbox{ and }  \varphi\geq 0 \mbox{ in }\R.
$$
Observing that
$$
\frac{\left|\frac{|u_{n}(x)|}{u_{\delta,n}(x)}(|u_{n}(x)|-|u_{n}(y)|)(\varphi(x)-\varphi(y))\right|}{|x-y|^{2}}\leq \frac{||u_{n}(x)|-|u_{n}(y)||}{|x-y|} \frac{|\varphi(x)-\varphi(y)|}{|x-y|}\in L^{1}(\R^{2}),
$$
and 
$$
\frac{|u_{n}(x)|}{u_{\delta,n}(x)}\rightarrow 1 \quad \mbox{ a.e. in } \R \mbox{ as } \delta\rightarrow 0,
$$
we can use \eqref{alves1}, \eqref{alves2} and the Dominated Convergence Theorem to deduce that
\begin{align}\label{Kato2}
&\limsup_{\delta\rightarrow 0} \Re\left[\iint_{\R^{2}} \frac{(u_{n}(x)-u_{n}(y)e^{\imath A_{\e}(\frac{x+y}{2})\cdot (x-y)})}{|x-y|^{2}} \left(\frac{\overline{u_{n}(x)}}{u_{\delta,n}(x)}\varphi(x)-\frac{\overline{u_{n}(y)}}{u_{\delta,n}(y)}\varphi(y)e^{-\imath A_{\e}(\frac{x+y}{2})\cdot (x-y)}  \right) dx dy\right] \nonumber\\
&\geq \limsup_{\delta\rightarrow 0} \iint_{\R^{2}} \frac{|u_{n}(x)|}{u_{\delta,n}(x)}(|u_{n}(x)|-|u_{n}(y)|)(\varphi(x)-\varphi(y)) \frac{dx dy}{|x-y|^{2}} \nonumber\\
&=\iint_{\R^{2}} \frac{(|u_{n}(x)|-|u_{n}(y)|)(\varphi(x)-\varphi(y))}{|x-y|^{2}} dx dy.
\end{align}
Invoking the Dominated Convergence Theorem again (we recall that $\frac{|u_{n}|^{2}}{u_{\delta, n}}\leq |u_{n}|$ and $\varphi\in C^{\infty}_{c}(\R, \R)$) we obtain
\begin{equation}\label{Kato3}
\lim_{\delta\rightarrow 0} \int_{\R} V_{\e_{n}}(x)\frac{|u_{n}|^{2}}{u_{\delta,n}}\varphi \, dx=\int_{\R} V_{\e_{n}}(x) |u_{n}|\varphi \, dx
\end{equation}
and
\begin{equation}\label{Kato4}
\lim_{\delta\rightarrow 0}  \int_{\R} g_{\e_{n}}(x, |u_{n}|^{2})\frac{|u_{n}|^{2}}{u_{\delta,n}}\varphi \, dx=\int_{\R} g_{\e_{n}}(x, |u_{n}|^{2}) |u_{n}|\varphi \, dx.
\end{equation}
Putting together \eqref{Kato1}, \eqref{Kato2}, \eqref{Kato3} and \eqref{Kato4} we can deduce that
\begin{align*}
\iint_{\R^{2}} \frac{(|u_{n}(x)|-|u_{n}(y)|)(\varphi(x)-\varphi(y))}{|x-y|^{2}} dx dy+\int_{\R} V_{\e_{n}}(x) |u_{n}|\varphi \, dx\leq 
\int_{\R} g_{\e_{n}}(x, |u_{n}|^{2}) |u_{n}|\varphi \, dx
\end{align*}
for any $\varphi\in C^{\infty}_{c}(\R, \R)$ such that $\varphi\geq 0$, that is $|u_{n}|$ is a weak subsolution to \eqref{Kato0}.\\
Now, using $(V_{1})$, we can note that $v_{n}=|u_{n}|(\cdot+\tilde{y}_{n})$ solves 
\begin{equation}\label{Pkat}
(-\Delta)^{1/2} v_{n} + V_{0}v_{n}\leq g(\e_{n} x+\e_{n}\tilde{y}_{n}, v_{n}^{2})v_{n} \quad \mbox{ in } \R. 
\end{equation}
Let $z_{n}\in H^{\frac{1}{2}}(\R, \R)$ be a solution to
\begin{equation}\label{US}
(-\Delta)^{1/2} z_{n} + V_{0}z_{n}=g_{n} \quad \mbox{ in } \R,
\end{equation}
where
$$
g_{n}:=g(\e_{n} x+\e_{n}\tilde{y}_{n}, v_{n}^{2})v_{n}\in L^{r}(\R, \R) \quad \mbox{ for any } r\in [2, \infty].
$$
Since \eqref{UBu} yields $\|v_{n}\|_{L^{\infty}(\R)}\leq C$ for all $n\in \mathbb{N}$, by interpolation we know that $v_{n}\rightarrow v$ strongly converges in $L^{r}(\R, \R)$ for all $r\in (2, \infty)$, for some $v\in L^{r}(\R, \R)$, and from the growth assumptions on $f$, we can also see that $g_{n}\rightarrow  f(v^{2})v$ in $L^{r}(\R, \R)$ and $\|g_{n}\|_{L^{\infty}(\R)}\leq C$ for all $n\in \mathbb{N}$.
Then, we deduce that $z_{n}=\mathcal{K}*g_{n}$, where $\mathcal{K}$ is the Bessel kernel (see \cite{BG, FQT}), and arguing as in Lemma $3.12$ in \cite{ADOM}, we deduce that $|z_{n}(x)|\rightarrow 0$ as $|x|\rightarrow \infty$ uniformly in $n\in \mathbb{N}$.
Since $v_{n}$ satisfies \eqref{Pkat} and $z_{n}$ solves \eqref{US}, a comparison argument gives $0\leq v_{n}\leq z_{n}$ a.e. in $\R$ and for all $n\in \mathbb{N}$. Furthermore, we can infer that $v_{n}(x)\rightarrow 0$ as $|x|\rightarrow \infty$ uniformly in $n\in \mathbb{N}$.
\end{proof}

\noindent
Now, we are ready to present the proof of the main result of this paper. 
\begin{proof}[Proof of Theorem \ref{thm1}]
Let $\delta>0$ be such that $M_{\delta}\subset \Lambda$, and we show that there exists  $\hat{\e}_{\delta}>0$ such that for any $\e\in (0, \hat{\e}_{\delta})$ and any solution $u\in \widetilde{\mathcal{N}}_{\e}$ of \eqref{MPe} we have
\begin{equation}\label{Ua}
\|u\|_{L^{\infty}(\R\setminus \Lambda_{\e})}<t_{a}.
\end{equation}
Assume by contradiction that for some sequence $\e_{n}\rightarrow 0$ we can obtain  $u_{n}\in \widetilde{\mathcal{N}}_{\e_{n}}$ verifying $J'_{\e_{n}}(u_{n})=0$ and
\begin{equation}\label{AbsAFF}
\|u_{n}\|_{L^{\infty}(\R\setminus \Lambda_{\e})}\geq t_{a}.
\end{equation}
Since $J_{\e_{n}}(u_{n})\leq c_{V_{0}}+h_{1}(\e_{n})$, we can argue as in the first part of Lemma \ref{prop3.3} to see that $J_{\e_{n}}(u_{n})\rightarrow c_{V_{0}}$. This fact together with $J'_{\e_{n}}(u_{n})=0$ implies that 
$$
\limsup_{n\rightarrow \infty}\|u_{n}\|_{\e_{n}}^{2}<1.
$$ 
Using Lemma \ref{prop3.3} there exists $(\tilde{y}_{n})\subset \R^{3}$ such that $\e_{n}\tilde{y}_{n}\rightarrow y_{0}$ for some $y_{0} \in M$. 
Now, we can find $r>0$ such that, for some subsequence still denoted by itself, we obtain $(\tilde{y}_{n}-r, \tilde{y}_{n}+r)\subset \Lambda$ for all $n\in \mathbb{N}$.
Hence, $(\tilde{y}_{n}-\frac{r}{\e_{n}}, \tilde{y}_{n}+\frac{r}{\e_{n}})\subset \Lambda_{\e_{n}}$ for all $n\in \mathbb{N}$, which implies that
$$
\R\setminus \Lambda_{\e_{n}}\subset \R \setminus \left(\tilde{y}_{n}-\frac{r}{\e_{n}}, \tilde{y}_{n}+\frac{r}{\e_{n}} \right) \quad \mbox{ for any } n\in \mathbb{N}.
$$ 
Invoking Lemma \ref{moser}, there exists $R>0$ such that 
$$
v_{n}(x)<t_{a} \quad \mbox{ for } |x|\geq R, n\in \mathbb{N},
$$ 
where $v_{n}(x)=|u_{n}|(x+ \tilde{y}_{n})$. 
Hence $|u_{n}(x)|<t_{a}$ for any $x\in \R\setminus \left(\tilde{y}_{n}-R, \tilde{y}_{n}+R\right)$ and $n\in \mathbb{N}$. Then we  can find $\nu \in \mathbb{N}$ such that for any $n\geq \nu$ and $r/\e_{n}>R$ it holds 
$$
\R\setminus \Lambda_{\e_{n}}\subset \R \setminus \left(\tilde{y}_{n}- \frac{r}{\e_{n}}, \tilde{y}_{n}+\frac{r}{\e_{n}}\right)\subset \R\setminus (\tilde{y}_{n}-R, \tilde{y}_{n}+R).
$$ 
Then $|u_{n}(x)|<t_{a}$ for any $x\in \R\setminus \Lambda_{\e_{n}}$ and $n\geq \nu$, and this contradicts \eqref{AbsAFF}.

Let $\tilde{\e}_{\delta}>0$ be given by Theorem \ref{multiple} and we set $\e_{\delta}=\min\{\tilde{\e}_{\delta}, \hat{\e}_{\delta} \}$. Applying Theorem \ref{multiple} we obtain $cat_{M_{\delta}}(M)$ nontrivial solutions to \eqref{MPe}.
If $u\in \h$ is one of these solutions, then $u\in \widetilde{\mathcal{N}}_{\e}$, and in view of \eqref{Ua} and the definition of $g$, we can infer that $u$ is also a solution to \eqref{MPe}. Since $\hat{u}_{\e}(x)=u_{\e}(x/\e)$ is a solution to (\ref{P}), we can deduce that \eqref{P} has at least $cat_{M_{\delta}}(M)$ nontrivial solutions.

Finally, we study the behavior of the maximum points of  $|\hat{u}_{n}|$. Take $\e_{n}\rightarrow 0$ and $(u_{n})$ a sequence of solutions to \eqref{MPe} as above. We first note that $(g_1)$ implies that there exists $\gamma> 0$ such that
\begin{align}\label{4.18HZ}
g_{\e}(x, t^{2})t^{2}\leq \frac{V_{0}}{2}t^{2} \quad \mbox{ for all } x\in \R, |t|\leq \gamma.
\end{align}
Arguing as above, we can take $R>0$ such that
\begin{align}\label{4.19HZ}
\|u_{n}\|_{L^{\infty}(\R\setminus (\tilde{y}_{n}-R, \tilde{y}_{n}+R))}<\gamma.
\end{align}
Up to a subsequence, we may also assume that
\begin{align}\label{4.20HZ}
\|u_{n}\|_{L^{\infty}(\tilde{y}_{n}-R, \tilde{y}_{n}+R)}\geq \gamma.
\end{align}

Indeed, if \eqref{4.20HZ} is not true, we get $\|u_{n}\|_{L^{\infty}(\R)}< \gamma$, and it follows from $J_{\e_{n}}'(u_{n})=0$, \eqref{4.18HZ} and Lemma \ref{DI} that 
$$
[|u_{n}|]^{2}+\int_{\R}V_{0}|u_{n}|^{2}dx\leq \|u_{n}\|^{2}_{\e_{n}}=\int_{\R} g_{\e_{n}}(x, |u_{n}|^{2})|u_{n}|^{2}\,dx\leq \frac{V_{0}}{2}\int_{\R}|u_{n}|^{2}\, dx
$$
which gives $\||u_{n}|\|_{V_{0}}=0$ that is a contradiction. Hence \eqref{4.20HZ} holds.

Taking into account \eqref{4.19HZ} and \eqref{4.20HZ}, we can infer that the maximum points $p_{n}$ of $|u_{n}|$ belong to $(\tilde{y}_{n}-R, \tilde{y}_{n}+R)$, that is $p_{n}=\tilde{y}_{n}+q_{n}$ for some $q_{n}\in (-R, R)$. Recalling that the associated solution of \eqref{P} is of the form $\hat{u}_{n}(x)=u_{n}(x/\e_{n})$, we can see that a maximum point $\eta_{n}$ of $|\hat{u}_{n}|$ is $\eta_{n}=\e_{n}\tilde{y}_{n}+\e_{n}q_{n}$. Since $q_{n}\in (-R, R)$, $\e_{n}\tilde{y}_{n}\rightarrow y_{0}$ and $V(y_{0})=V_{0}$, from the continuity of $V$ we can conclude that
$$
\lim_{n\rightarrow \infty} V(\eta_{n})=V_{0}.
$$

Next we estimate the decay properties of $|\hat{u}_{n}|$.
Arguing as in Lemma $4.3$ in \cite{FQT}, we can find a function $w$ such that 
\begin{align}\label{HZ1}
0<w(x)\leq \frac{C}{1+|x|^{2}},
\end{align}
and
\begin{align}\label{HZ2}
(-\Delta)^{1/2} w+\frac{V_{0}}{2}w\geq 0 \quad \mbox{ in } \R\setminus (-R_{1}, R_{1}) 
\end{align}
for some suitable $R_{1}>0$. Using Lemma \ref{moser}, we know that $v_{n}(x)\rightarrow 0$ as $|x|\rightarrow \infty$ uniformly in $n\in \mathbb{N}$, so there exists $R_{2}>0$ such that
\begin{equation}\label{hzero}
h_{n}=g(\e_{n}x+\e_{n}\tilde{y}_{n}, v_{n}^{2})v_{n}\leq \frac{V_{0}}{2}v_{n}  \quad \mbox{ in } \R \setminus (-R_{2}, R_{2}).
\end{equation}
Let $w_{n}$ be a solution to 
$$
(-\Delta)^{1/2}w_{n}+V_{0}w_{n}=h_{n} \quad \mbox{ in } \R.
$$
Then $w_{n}(x)\rightarrow 0$ as $|x|\rightarrow \infty$ uniformly in $n\in \mathbb{N}$, and by comparison $0\leq v_{n}\leq w_{n}$ in $\R$. Moreover, due to \eqref{hzero}, it holds
\begin{align}\label{HZ3}
(-\Delta)^{1/2}w_{n}+\frac{V_{0}}{2}w_{n}=h_{n}-\frac{V_{0}}{2}w_{n}\leq 0 \quad \mbox{ in } \R \setminus (-R_{2}, R_{2}).
\end{align}
Choose $R_{3}=\max\{R_{1}, R_{2}\}$ and we set 
\begin{align}\label{HZ4}
d=\inf_{(-R_{3}, R_{3})} w>0 \quad \mbox{ and } \quad \tilde{w}_{n}=(b+1)w-d\, w_{n}.
\end{align}
where $b=\sup_{n\in \mathbb{N}} \|w_{n}\|_{L^{\infty}(\R)}<\infty$. 
Next, we show that 
\begin{equation}\label{HZ5}
\tilde{w}_{n}\geq 0 \quad \mbox{ in } \R.
\end{equation}
For this purpose, we assume by contradiction that there exists a sequence $(\bar{x}_{j, n})\subset \R$ such that 
\begin{align}\label{HZ6}
\inf_{x\in \R} \tilde{w}_{n}(x)=\lim_{j\rightarrow \infty} \tilde{w}_{n}(\bar{x}_{j, n})<0. 
\end{align}
Notice that
\begin{align} \label{HZ0N}
\lim_{|x|\rightarrow \infty} \sup_{n\in \mathbb{N}}\tilde{w}_{n}(x)=0,  
\end{align}
so that $(\bar{x}_{j, n})$ is bounded, and, up to subsequence, we may assume that there exists $\bar{x}_{n}\in \R$ such that $\bar{x}_{j, n}\rightarrow \bar{x}_{n}$ as $j\rightarrow \infty$. 
Thus, (\ref{HZ6}) becomes
\begin{align}\label{HZ7}
\inf_{x\in \R} \tilde{w}_{n}(x)= \tilde{w}_{n}(\bar{x}_{n})<0.
\end{align}
Using the minimality of $\bar{x}_{n}$ and the representation formula for the fractional Laplacian \cite{DPV}, we can see that 
\begin{align}\label{HZ8}
(-\Delta)^{1/2}\tilde{w}_{n}(\bar{x}_{n})=\frac{1}{2\pi} \int_{\R} \frac{2\tilde{w}_{n}(\bar{x}_{n})-\tilde{w}_{n}(\bar{x}_{n}+\xi)-\tilde{w}_{n}(\bar{x}_{n}-\xi)}{|\xi|^{2}} \, d\xi\leq 0.
\end{align}
From \eqref{HZ4}, we get
\begin{align}\label{HZ0}
\tilde{w}_{n}\geq bd+w-b\, d>0 \quad \mbox{ in } (-R_{3}, R_{3}),
\end{align}
In the light of (\ref{HZ6}) and (\ref{HZ0}), we obtain that $\bar{x}_{n}\in \R\setminus (-R_{3}, R_{3})$.
From \eqref{HZ1}, \eqref{HZ2} and \eqref{HZ3} we can infer that
\begin{align}\label{HZ00}
(-\Delta)^{1/2} \tilde{w}_{n}+\frac{V_{0}}{2}\tilde{w}_{n}\geq 0 \quad \mbox{ in } \R\setminus (-R_{3}, R_{3}).
\end{align}
On the other hand, thanks to (\ref{HZ7}) and (\ref{HZ8}), we get 
$$
(-\Delta)^{1/2} \tilde{w}_{n}(\bar{x}_{n})+\frac{V_{0}}{2}\tilde{w}_{n}(\bar{x}_{n})<0,
$$
which contradicts (\ref{HZ00}).
Accordingly, (\ref{HZ5}) holds true, and using (\ref{HZ1}) and $v_{n}\leq w_{n}$ we get
\begin{align*}
0\leq v_{n}(x)\leq w_{n}(x)\leq \frac{(b+1)}{d}w(x)\leq \frac{\tilde{C}}{1+|x|^{2}} \quad \mbox{ for all } n\in \mathbb{N}, x\in \R,
\end{align*}
for some constant $\tilde{C}>0$. 
Recalling the definition of $v_{n}$, we have  
\begin{align*}
|\hat{u}_{n}|(x)&=|u_{n}|\left(\frac{x}{\e_{n}}\right)=v_{n}\left(\frac{x}{\e_{n}}-\tilde{y}_{n}\right) \\
&\leq \frac{\tilde{C}}{1+|\frac{x}{\e_{n}}-\tilde{y}_{n}|^{2}} \\
&=\frac{\tilde{C} \e_{n}^{2}}{\e_{n}^{2}+|x- \e_{n} \tilde{y}_{n}|^{2}} \\
&\leq \frac{\tilde{C} \e_{n}^{2}}{\e_{n}^{2}+|x-\eta_{n}|^{2}} \quad \forall x\in \R.
\end{align*}
Therefore, the proof of Theorem \ref{thm1} is complete.
\end{proof}

\noindent {\bf Acknowledgements.} 
The author warmly thanks the anonymous referee for her/his useful and nice comments on the paper.

\end{document}